\documentclass[11pt,a4paper]{article}
\usepackage[margin=3cm]{geometry}
\usepackage{graphicx}
\usepackage{amsmath, amsthm, amssymb, amsfonts}
\usepackage{bbm}
\usepackage{enumerate}
\usepackage{mathtools}
\usepackage{xcolor}
\usepackage{hyperref}
\usepackage{centernot}

\newtheorem{theorem}{Theorem}[section]
\newtheorem{remark}[theorem]{Remark}
\newtheorem{lemma}[theorem]{Lemma}
\newtheorem{proposition}[theorem]{Proposition}
\newtheorem{corollary}[theorem]{Corollary}

\newtheorem{definition}[theorem]{Definition}

\numberwithin{equation}{section}

\usepackage{authblk}

\title{Regularity of Gibbs measures for unbounded spin systems on general graphs}

\author[1]{Christoforos Panagiotis}
\author[2]{William Veitch}
\affil[1,2]{{Department of Mathematical Sciences}\\
        {University of Bath}\\
        {UK}}

\begin{document}
\date{}
\maketitle

\begin{abstract}
We consider a general class of spin systems with potentially unbounded real-valued spins, defined via a single-site potential with super-Gaussian tails on general graphs, allowing for both short- and long-range interactions. This class includes all $P(\varphi)$ models, in particular the well-studied $\varphi^4$ model. We construct an infinite-volume extremal measure called the plus measure as the limit of finite-volume Gibbs measures with weakly growing boundary conditions and show that it is regular, in the sense that it admits a bounded Radon-Nikodym derivative with respect to a product measure of single-site distributions with super-Gaussian tails. 
Moreover, we provide an alternative construction of the plus measure as the limit of finite-volume Gibbs measures that are regular up to the boundary.

As a key intermediate step, we establish regularity and tightness of finite-volume Gibbs measures for a large class of growing boundary conditions $\xi$. Our regularity estimates are encoded in terms of a function $A(\xi)$, which provides precise control on the change of measure induced by boundary perturbations, and can thus be viewed as an analogue of the Cameron–Martin theorem for non-Gaussian fields. In the nearest-neighbour case, this class includes boundary conditions that grow at most double-exponentially in the distance to the boundary when the single-site measure has tails of the form $e^{-a|u|^n}$ for some $n>2$. In contrast, when the single-site measure has Gaussian tails, the allowed growth is at most exponential. Our results apply to arbitrary graphs and improve upon earlier results of Lebowitz and Presutti \cite{Lebowitz_Presutti}, and Ruelle \cite{Ruelle1970, Ruelle_estimates}, which apply in the context of $\mathbb{Z}^d$ and allow only logarithmically growing boundary conditions, as well as subsequent extensions to vertex-transitive graphs of polynomial growth \cite{random_tangled}.
\end{abstract}

\section{Introduction}

We consider a general family of spin systems with pair interactions where each spin takes values in $\mathbb{R}$. On a finite graph $(\Lambda, E)$ with ferromagnetic nearest-neighbour interactions and free boundary conditions, this consists of probability measures on spin configurations $\varphi \in \mathbb{R}^{\Lambda}$ defined so that the expectation of a bounded measurable function $f: \mathbb{R}^{\Lambda} \rightarrow \mathbb{R}$ is given by
\begin{equation*}
    \langle f \rangle = \frac{1}{Z} \int_{\mathbb{R}^{\Lambda}} f(\varphi) \exp\left(\sum_{\{x,y\} \in E} \beta \varphi_x \varphi_y\right) \prod_{x \in \Lambda} \mathrm{d} \rho(\varphi_x),
\end{equation*}
where $\beta \geq 0$ is the \textit{inverse temperature}, $\rho$ is the single-site measure (chosen so that the above integral is finite), and $Z$ is the appropriate normalisation constant, called the \textit{partition function}. 
This framework includes several well-studied models in statistical physics:
\begin{enumerate}
    \item[$(i)$] The Ising and $\varphi^4$ models, by choosing, respectively
    \begin{equation*}
        \rho=\delta_{-1}+\delta_1, \qquad \mathrm{d}\rho(\varphi_x)=\exp(-g\varphi_x^4-a\varphi_x^2)\mathrm{d}\varphi_x,
    \end{equation*}
    where for $t\in \mathbb R$, $\delta_t$ is the Dirac measure at $t$, and where $g>0$ and $a\in \mathbb R$.
    \item[$(ii)$] The Gaussian free field, by choosing
    \begin{equation*}
        \mathrm{d}\rho(\varphi_x)=\exp(-a\varphi_x^2)\mathrm{d}\varphi_x
    \end{equation*}
    for a large enough constant $a$ that depends on $\beta$.
    \item[$(iii)$] General $P(\varphi)$ models, by choosing
    \begin{equation}\label{eq:equation p(phi)}
    \mathrm{d}\rho(\varphi_x)=\exp(-P(\varphi_x))\mathrm{d}\varphi_x,
    \end{equation}
    where $P$ is an even polynomial of degree at least $4$ and of positive leading coefficient.
\end{enumerate}
For a comprehensive introduction to these models, the interested reader can consult \cite{DC_lecturenotes,friedli_velenik_2017,Glimm-Jaffe,WP21}.

The definition of the model can be extended to include boundary conditions and long-range interactions; see Section \ref{section:def of model} for the more general definition. We will write $\nu_{\Lambda, \beta, \rho, J}^{\xi}$ for the finite-volume measure on $\Lambda$ with inverse temperature $\beta$, single-site measure $\rho$, interactions $J$ and boundary conditions $\xi$.

Infinite-volume Gibbs measures, defined via the \textit{Dobrushin--Lanford--Ruelle (DLR) equation} (see Definition \ref{Def:Gibbs} below), are objects of central interest in statistical physics, arising naturally as limits of finite-volume measures as $(\Lambda,E)$ tends to an infinite graph such as the lattice $\mathbb{Z}^d$. Since spins are, in general, unbounded in our setting, this raises the following question: for which boundary conditions is the sequence of measures tight? 
In the case of the massless Gaussian free field, the Cameron--Martin formula implies that the configuration on $\Lambda$ under boundary conditions $\xi$ has the same distribution as the configuration with free boundary conditions on $\Lambda$ shifted by the harmonic extension of $\xi$. 
In particular, if the boundary spins grow to infinity as $|\Lambda| \rightarrow \infty$, then the sequence is not tight. In contrast, for the massive Gaussian free field, the faster decay of the density of the single-site measure leads to exponential decay of correlations, which in turn allows for tightness of finite-volume measures as long as the boundary conditions grow weakly enough.

Answering this question is more challenging in the non-Gaussian case, for example, when the single-site measure $\rho$ is such that
\begin{equation}
    \label{eq: single-site n=2}
    \forall  a > 0 \quad 0 < \int_{\mathbb{R}} e^{a |u|^{2}} \mathrm{d} \rho(u) < \infty.
\end{equation}
The problem was studied on the lattice $\mathbb{Z}^{d}$ by Lebowitz and Presutti \cite{Lebowitz_Presutti} who proved tightness for boundary conditions that grow like $\sqrt{\log(\|x\|_{\infty})}$. Their approach utilises a regularity estimate developed by Ruelle \cite{Ruelle1970, Ruelle_estimates}, which bounds the density at a spin configuration $\varphi$ in terms of the density at $\varphi$ of a non-interacting system (i.e.\ a system with $\beta = 0$). See also \cite{COPP78, Bellissard1982, AKRT00} and references therein for related results on infinite-volume Gibbs measures supported on configurations of tempered growth.

The methods of \cite{Lebowitz_Presutti} were applied in \cite{random_tangled} to the $\varphi^4$ model on vertex-transitive graphs of polynomial growth. 
The main result of \cite{random_tangled} in this context is that any translation invariant Gibbs measure is a convex combination of two extremal measures, and regularity is needed to construct these extremal measures as limits of finite-volume measures. It was observed in \cite{random_tangled} that this result should extend to the setting of vertex-transitive amenable graphs, as previously established for the Ising model \cite{Ising_Gibbs_states}, but it is not clear how to generalise the regularity estimates of \cite{Lebowitz_Presutti} to this case. This provides the motivation for us to develop alternative arguments for proving regularity.

The main result of the present article is a regularity estimate that applies to both nearest-neighbour and long-range interactions on an arbitrary graph. The theorem bounds the Radon--Nikodym derivative of the system in a finite domain $\Lambda$ at parameter $\beta>0$, with boundary conditions $\xi$ that are allowed to grow to infinity as $|\Lambda|\to\infty$, with respect to a product measure associated with a non-interacting system with a modified single-site measure. The bound is expressed in terms of a function $A(x,\Lambda,\xi,C)$, where $x \in \Lambda$ is a vertex and $C \geq 1$ is a parameter. This function is related to the mean of each spin $\varphi_x$ and compared to the massless Gaussian free field, it can be interpreted as an analogue of the harmonic extension of $\xi$. More generally, it plays the role of a non-Gaussian analogue of the Cameron–Martin formula, quantifying the change of measure induced by the boundary conditions. The function $A(x,\Lambda,\xi,C)$ may take large values when $x$ is close to the boundary, but it decreases as $x$ moves further into the bulk, and provided the boundary conditions $\xi$ do not grow too rapidly, $A(x,\Lambda,\xi,C)$ remains bounded in the bulk of $\Lambda$. The formal definition can be found in Section \ref{section: bc notation}. For now, let us mention for concreteness that in the case of nearest-neighbour interactions, 
\[
A(x, \Lambda, \xi,C) \approx \max \left\{ 1, \max_{z \in \partial \Lambda} \left(\frac{|\xi_z|}{C}\right)^{(n-1)^{-d(x, z)}} \right\}.
\]

We observe a qualitative change in the behaviour of $A(x,\Lambda,\xi,C)$ depending on the tails of the single-site measure $\rho$. In order to make this point clearer, let us state the result in the case where $\rho$ satisfies the stronger assumption
\begin{equation}
\label{eq: single-site assumption}
0 < \int_{\mathbb{R}} e^{a |u|^{n}} \mathrm{d} \rho(u) < \infty,
\end{equation}
for some $a>0$ and $n>2$, a condition satisfied by all the $P(\varphi)$ models. 
Here we let $V$ be a countable set, and we consider interactions $(J_{x, y})_{x, y \in V}$ on $V$. We call the interactions \textit{admissible} if they satisfy: 
\begin{enumerate}
\renewcommand{\labelenumi}{\textbf{(C\arabic{enumi})}}
\renewcommand{\theenumi}{C\arabic{enumi}}
    \item \label{C1}\textbf{(Symmetry)} $J_{x, y} = J_{y, x}$ for all $x, y \in V$;
    \item \label{C2}\textbf{(Integrability)} There exists $f:\mathbb{R} \rightarrow [1, \infty)$ and constants $\delta_f, M_f > 0$ such that $f(t) \geq \log(|t|^{-1})^{1/n}$ for all $t \in (-\delta_f, \delta_f)$, and $\sum_{y \in V} |J_{x, y}| f(J_{x, y}) \leq M_f$ for all $x \in V$.
    \end{enumerate}    
We now state our regularity result, which does \textit{not} require the interactions $J$ to be ferromagnetic. Below $\varphi|_{\Lambda'}$ denotes the restriction of the field to $\Lambda'$, and $\mathrm{d} \nu_{\Lambda, \beta, \rho, J}^{\xi}[\varphi|_{\Lambda'} = \psi]$ the density of the restriction.

\begin{theorem}
\label{main theorem}
    Let $V$ be a countable set and $(J_{x, y})_{x, y \in V}$ be admissible interactions on $V$. Let $\beta \geq 0$, $a > 0$, $n > 2$ and let $\rho$ be a single-site measure satisfying \eqref{eq: single-site assumption}.
    There exist constants $C \geq 1, \tilde{C} > 0$ such that for any $\Lambda \subset V$ finite, $\Lambda' \subset \Lambda$, $\psi \in \mathbb{R}^{\Lambda'}$, and any boundary conditions $\xi \in \mathbb{R}^{V}$ with $\sum_{y \in V} |J_{x, y} \xi_y| < \infty$ for all $x \in \Lambda$,
    \begin{equation*}
        \mathrm{d} \nu_{\Lambda, \beta, \rho, J}^{\xi}[\varphi|_{\Lambda'} = \psi]
        \leq \left( \prod_{x \in \Lambda'} e^{\tilde{C} A(x, \Lambda, \xi, C)^{n}} \right) \mathrm{d} \nu_{\Lambda', 0, \rho_{\frac{a}{2}}, 0}^{0} [\psi],
    \end{equation*}
    where $\rho_{\frac{a}{2}}$ is defined by $\mathrm{d}\rho_{\frac{a}{2}}(u) = e^{\frac{a}{2} |u|^{n}} \mathrm{d} \rho(u)$.
    Moreover, one can take $C = C_1 \beta^{\frac{1}{n-2}} + C_2$ and $\tilde{C} = \tilde{C}_1 \beta^{\frac{n}{n-2}} + \tilde{C}_2$, where $C_1, C_2, \tilde{C}_1, \tilde{C}_2$ depend only on $\delta_f,M_f$ and $\rho$.
\end{theorem}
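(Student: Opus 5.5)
The plan is to reduce the bound to a pointwise comparison of densities, single site by single site. We write the density of $\nu^{\xi}_{\Lambda,\beta,\rho,J}$ restricted to $\Lambda'$ as a ratio: a partial integral over $\varphi|_{\Lambda\setminus\Lambda'}$ of the Boltzmann weight with $\varphi|_{\Lambda'}=\psi$, divided by the partition function $Z^{\xi}_\Lambda$. The two pieces are handled separately.

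First, the numerator. For each $x\in\Lambda$, the interaction term $\beta\sum_y J_{x,y}\varphi_x\varphi_y$ is split by a Young-type inequality. The part coupling $x$ to spins inside $\Lambda$ is bounded by $\tfrac{a}{4}|\varphi_x|^n$ plus a term of order $\beta^{\frac{n}{n-2}}$ times weighted powers of neighbouring spins. Condition \eqref{C2} is used precisely here. The weight $f(J_{x,y})\ge \log(|J_{x,y}|^{-1})^{1/n}$ lets us distribute the coupling so that a small $J_{x,y}$ transmits only a small fraction of $|\varphi_y|^n$. The part coupling $x$ to the boundary is bounded by $\tfrac a4|\varphi_x|^n + c\,(\beta\sum_y|J_{x,y}\xi_y|)^{n/(n-1)}$.

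The key idea is not to do this crudely, since that would require $\xi$ to be bounded. Instead we proceed inductively in the distance to the boundary. A spin adjacent to a boundary value of size $|\xi_z|$ is effectively of size at most $\max\{1,|\xi_z|/C\}^{1/(n-1)}$, because its single-site weight $e^{-a|u|^n}$ beats the linear tilt $\beta J\xi_z u$ exactly at that scale. Iterating this gives the $(n-1)^{-d(x,z)}$ exponents. I would therefore take as given the definition of $A(x,\Lambda,\xi,C)$ from Section \ref{section: bc notation}. I expect it satisfies a recursive inequality of the form
\[
A(x)^{n-1}\gtrsim C^{-1}\Big(1+\sum_y |J_{x,y}|\,|\xi_y| + \sum_{y\in\Lambda}|J_{x,y}|f(J_{x,y})A(y)\Big),
\]
and I would verify that the constant choice $C=C_1\beta^{1/(n-2)}+C_2$ makes this consistent. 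With this in hand, a weighted Young inequality with weights $A(x)$ shows that
\[
\beta\sum_{x,y}J_{x,y}\varphi_x\varphi_y+\beta\sum_x\varphi_x\sum_{y\notin\Lambda}J_{x,y}\xi_y\le \sum_{x\in\Lambda}\Big(\tfrac a2|\varphi_x|^n+\tilde C A(x)^n\Big).
\]
The integral over $\Lambda\setminus\Lambda'$ of $e^{\frac a2|\varphi_x|^n}d\rho$ is finite by \eqref{eq: single-site assumption}, and this yields a factor $e^{\tilde C A(x)^n}$ for those sites too.

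Second, the denominator. We need a lower bound $Z^\xi_\Lambda\ge \prod_{x\in\Lambda\setminus\Lambda'}e^{-\tilde CA(x)^n}\cdot c^{|\Lambda|}$, so that the surplus factors from $\Lambda\setminus\Lambda'$ cancel. For this I would restrict the integral to the event that each $\varphi_x$ lies in a fixed window around $0$ of $\rho$-mass bounded below. On that window the interaction energy is bounded below by $-\beta\sum_x\sum_y|J_{x,y}\xi_y|$ minus a constant per site. This is dominated by $\tilde C A(x)^n$ using the same recursion. Any normalising constants $c^{|\Lambda|}$ are absorbed by enlarging $\tilde C$, since $A\ge1$. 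Combining the two bounds and normalising $\rho_{a/2}$ to a probability measure gives the claim.

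The main obstacle is the weighted Young step on arbitrary graphs with long-range couplings. One must choose the splitting weights so that the $\tfrac a2|\varphi_x|^n$ budget at each site is not exceeded after summing contributions from all $y$; this is where $\sum_y|J_{x,y}|f(J_{x,y})\le M_f$ enters. One must also check that the resulting site constants really match $A(x)^n$ rather than something growing with the degree. I would first do the nearest-neighbour case with bounded degree to fix the exponents, then replace degree by $M_f$.
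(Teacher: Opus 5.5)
Your proposal has a genuine gap. You bound the numerator from above and $Z^{\xi}_{\Lambda}$ from below separately, by per-site estimates. Such estimates cannot give a prefactor indexed only by $\Lambda'$ and uniform in $\Lambda$.

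Concretely:
\begin{itemize}
\item Integrating out each $x\in\Lambda\setminus\Lambda'$ after your pointwise bound leaves a factor $\rho_{\frac a2}(\mathbb{R})\,e^{\tilde C A(x)^n}$ per site.
\item Your lower bound $Z^{\xi}_{\Lambda}\ge c^{|\Lambda|}\prod_{x\in\Lambda\setminus\Lambda'}e^{-\tilde C A(x)^n}$ gives $1/Z^{\xi}_{\Lambda}\le c^{-|\Lambda|}\prod_{x\in\Lambda\setminus\Lambda'}e^{+\tilde C A(x)^n}$. The factors therefore compound to $e^{2\tilde C A(x)^n}$ rather than cancel.
\item Even for $\xi=0$, where $A\equiv 1$, you are left with $\bigl(\rho_{\frac a2}(\mathbb{R})e^{\tilde C}/c\bigr)^{|\Lambda\setminus\Lambda'|}$. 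Since $c\le\rho(\mathbb{R})\le\rho_{\frac a2}(\mathbb{R})$ and $\tilde C>0$, the base exceeds $1$, so the bound blows up as $\Lambda\nearrow V$.
\item Near $\partial\Lambda$ the factors $e^{\tilde C A(x)^n}$ are moreover enormous when $\xi$ is large.
\end{itemize}
None of this can be absorbed by enlarging $\tilde C$, because the right-hand side of the theorem contains factors only for $x\in\Lambda'$.

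The step you single out as the main obstacle is in fact easy and needs no recursion on $A$. The internal couplings cost $O(1)$ per site by \eqref{C2}, since $f\ge 1$. For the boundary field, the trivial walk ($m=0$) in the definition of $A$ already gives $|h_{x,\Lambda}|\le M_f C A(x)^{n-1}$. The real difficulty is uniformity in $\Lambda$.

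What is missing is a comparison in which the bulk cancels exactly, so that only a small, random set of sites is charged. The paper achieves this by surgery on configurations.
\begin{itemize}
\item It explores from $\Lambda'$ the cluster $\mathcal{C}$ of sites reachable by walks along which $|\varphi_{x_k}|\ge C A_{x_0}^{(n-1)^k}\prod_{i\le k} f(J_{x_{i-1},x_i})^{(n-1)^{k-i}}$.
\item It replaces the spins on $\Lambda'\cup\mathcal{C}$ by values $t_x\in[-\alpha_0,\alpha_0]$.
\item The doubly exponential thresholds ensure that every non-cluster neighbour $y$ of a cluster site $x$ satisfies $|\varphi_y|\le f(J_{x,y})C^{2-n}|\varphi_x|^{n-1}$, and similarly for the boundary field $h_{x,\Lambda}$.
\item Hence, by Lemmas \ref{edge removal general 1} and \ref{edge removal general 2}, the change of energy costs at most $\frac a2|\varphi_x|^n$ per cluster site and $\alpha_1 M_f A_x^n$ per site of $\Lambda'\setminus\mathcal{C}$.
\item The bound holds for every $t$, so averaging over $t$ against $\rho_a$ on $[-\alpha_0,\alpha_0]$ and integrating the untouched spins reproduces $Z^{\xi}_{\Lambda}$ itself, up to $e^{K|\Lambda'\cup\mathcal{C}|}$.
\item The excess on $\mathcal{C}$ is paid for by the tail factors $p_{x,y}$, because cluster spins are large.
\item The sum over cluster shapes is bounded by $\alpha_2^{|\Lambda'|}$ via domination by a subcritical branching process (Lemma \ref{lem:inside proof 2}).
\end{itemize}
It is this $\varphi$-dependent choice of modified sites, not any deterministic weighted Young inequality, that makes the constant depend only on $\Lambda'$ and on $A(x)$ for $x\in\Lambda'$.
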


Theorem~\ref{main theorem} can be generalised to allow for single-site measures that depend on the vertex --- see Remark~\ref{remark:changed single-site} --- and to more general Hamiltonians or even conditional measures --- see Theorem~\ref{changed theorem n=2} and Remark~\ref{rem:conditional measure}. We also expect our arguments to be robust enough to apply to models with $k$-body interactions, provided \eqref{eq: single-site assumption} is satisfied with $n > k$, but we do not pursue this here. Furthermore, Theorem~\ref{main theorem} applies to arbitrary graphs, thus extending the regularity results of \cite{Lebowitz_Presutti} and \cite{random_tangled}. In particular, it opens the way for characterising the translation invariant Gibbs measures for the ferromagnetic $\varphi^4$ model on any vertex-transitive amenable graph, but verifying this is beyond the scope of the current paper. 

We now outline the proof of Theorem~\ref{main theorem}, which we believe offers a clearer probabilistic intuition than earlier approaches to regularity. The proof is based on an exploration argument in which we consider the cluster $\mathcal{C}$ of $\Lambda'$ consisting of vertices $x$ for which the spin $\varphi_x$ takes a large value. To accommodate potentially large boundary conditions, the threshold for including a vertex $x$ in the cluster is allowed to gradually grow as $x$ moves further away from $\Lambda'$, so that it matches the boundary condition $\xi_z$ at a vertex $z$ on the boundary of $\Lambda$. This is where the function $A(x,\Lambda,\xi,C)$ comes into play; its precise definition enables fine control of the behaviour of $\mathcal{C}$. In particular, $A(x,\Lambda,\xi,C)$ is defined so that certain technical conditions relating parents and children in the exploration process are satisfied (see Lemmas \ref{edge removal general 1} and \ref{edge removal general 2}). These conditions allow us to isolate each vertex of $\mathcal{C}$ from its neighbours at a finite cost, thereby yielding a non-interacting system. Finally, we control the size of $\mathcal{C}$ by comparing it to the total progeny of a subcritical branching process; to carry this out, the parameter $C$ in the definition of $A(x,\Lambda,\xi,C)$ must be chosen appropriately. In contrast to \cite{Lebowitz_Presutti} and \cite{random_tangled}, our arguments do not require any assumption of polynomial growth or amenability of the underlying graph. 


As an immediate consequence of Theorem \ref{main theorem}, we obtain tightness for any boundary conditions such that $A(x, \Lambda, \xi, C)$ remains bounded for each $x\in V$ as $\Lambda \nearrow V$. In the case of ferromagnetic nearest-neighbour interactions, this includes boundary conditions growing like a double exponential of the form $K^{(n-1)^{d(o, x)}}$, where $o$ is a fixed origin. This improves on the results of Lebowitz and Presutti \cite{Lebowitz_Presutti}, which allow only logarithmically growing boundary conditions. In Proposition \ref{Anti-tightness prop}, we show that this result is optimal in the case of non-negative boundary conditions for $\rho$ defined as in \eqref{eq:equation p(phi)}, in the sense that tightness does not occur for non-negative boundary conditions that grow even faster than a constant to the power $(n-1)^{d(o, x)}$. In the case of long-range interactions, the rate of decay of the interactions $J$ comes into play, as any vertex $x$ can affect the value of $\varphi_o$ through the edge $ox$. This makes characterising the boundary conditions that lead to tightness challenging. Nevertheless, in Proposition \ref{long range bc example}, we prove that if the interactions $J$ are well behaved, in the sense that $|J_{x, y}| \leq d(x, y)^{-r}$ for some $r > 0$, then we have tightness for boundary conditions growing like $f(d(o, x)^{-r})$, where $f$ is a function that encodes the rate of decay of $J$, i.e.\ it satisfies \eqref{C2} and some additional assumptions.

Coming back to the dependence of the behaviour of $A(x,\Lambda,\xi,C)$ on the tails of the single-site measure $\rho$, let us mention that a similar statement (see Theorem \ref{Theorem: n=2}) to that of Theorem \ref{main theorem} holds if we relax the assumptions on the single-site measure $\rho$ to allow for any $\rho$ that satisfies \eqref{eq: single-site n=2}. In this case, we observe a qualitative change in the behaviour of $A(x, \Lambda, \xi, C)$. For example, in the nearest-neighbour case, we obtain tightness for any boundary conditions growing at most exponentially in the distance, so we observe a jump in the threshold for tightness from exponential to double-exponential at $n=2$.

Theorems \ref{main theorem} and \ref{Theorem: n=2} can be used to obtain regularity for infinite-volume measures, which we define below.
\begin{definition}
    \label{Def:Gibbs}
    Let $a>0, \beta \geq 0$, and assume $J$ satisfies \eqref{C1}, \eqref{C2} and $\rho$ satisfies \eqref{eq: single-site n=2}.
    We say that a probability measure $\nu$ on $\mathbb{R}^{V}$ with the $\sigma-$algebra generated by Borel events depending on finitely many vertices is
    \begin{itemize}
    \item
    $a$-regular if there exists a constant $B \in [0, \infty)$ 
    such that for every $\Lambda \subset V$ finite and $\psi \in \mathbb{R}^{\Lambda}$, 
    \begin{equation*}
        \mathrm{d}\nu[\varphi|_{\Lambda} = \psi] \leq e^{B|\Lambda|} \mathrm{d} \nu_{\Lambda, 0, \rho_a, 0}^{0}[\psi],
    \end{equation*}
    where $\rho_{a}$ is defined by $\mathrm{d}\rho_{a}(u) = e^{a |u|^{2}} \mathrm{d} \rho(u)$. 

    \item
    A Gibbs measure if for every finite $\Lambda \subset V$ and any bounded measurable function $g: \mathbb{R}^{\Lambda} \rightarrow \mathbb{R}$, the DLR equation
    \begin{equation*}
        \nu[g] = \int_{\xi \in \mathbb{R}^{V}} \langle g\rangle _{\Lambda, \beta, \rho, J}^{\xi} \mathrm{d} \nu(\xi)
    \end{equation*}
    holds. In particular, we assume that $\nu$ is almost surely supported on configurations $\xi$ such that   $\langle \cdot \rangle_{\Lambda, \beta, \rho, J}^{\xi}$ is well-defined.
    \end{itemize}
\end{definition}
In Section \ref{Section: infinite-volume}, we give conditions on $\xi$ that ensure that the limiting measure (if it exists) as $\Lambda \nearrow V$ is an $a$-regular Gibbs measure. For ferromagnetic interactions, we construct the plus measure $\nu^{+}$ as the limit of finite-volume measures and show that it is $a$-regular for some $a>0$. We also show that $\nu^{+}$ is maximal, hence extremal, in the sense that if $\nu$ is an $a'$-regular Gibbs measure for some $a'>0$, then $\nu$ is stochastically dominated by $\nu^{+}$. Furthermore, in Section~\ref{sec: alternative construction}, in the nearest-neighbour case, we introduce a family of finite-volume measures with random boundary conditions that converge to $\nu^{+}$ and are regular up to the boundary, in contrast to the constructions in \cite{Lebowitz_Presutti} and \cite{random_tangled}, which rely on logarithmically growing boundary conditions. These finite-volume measures are also stochastically decreasing in the volume, similarly to the case of the Ising model. As a key result for establishing these properties, we show that our spin measures are stochastically dominated by a product measure.
We expect that this alternative construction of the plus measure may help avoid challenges arising from the absence of maximal boundary conditions at finite volume, leading to simplifications of the arguments in \cite{random_tangled} and \cite{well_behaved}, as well as to applications in future works.

\subsection{Paper organisation}
In Section \ref{section: preliminaries}, we define the notation that will be used throughout the rest of the paper.
Section \ref{section: main proof} is dedicated to the proof of Theorem \ref{main theorem}.
The methods developed here can be applied to a range of other similar models, some examples of which are given in Section \ref{section: related models}.
In Section \ref{section: corollaries}, we examine for which boundary conditions tightness can be obtained and construct the infinite-volume plus measure as a limit of finite-volume measures. In Section~\ref{sec: stochastic domination}, we prove stochastic domination of our spin measures by a product measure, and we use this result to give an alternative construction of the plus measure.

\paragraph{Acknowledgements} We thank Trishen Gunaratnam, Dmitrii Krachun, Romain Panis and Franco Severo for useful discussions. CP was supported by an EPSRC New Investigator Award (UKRI1019).

\section{Definitions and preliminaries}
\label{section: preliminaries}
In this section, we define the model in full generality, as well as introduce some additional notation and results that will be used in the proofs.

\subsection{Definition of the model}
\label{section:def of model}
Let $V$ be a countably infinite set of vertices. Let $\beta \geq 0$ be the inverse temperature and let $\rho$ be a single-site measure satisfying \eqref{eq: single-site n=2}. At some points, including in Theorem \ref{main theorem}, we assume $\rho$ satisfies the stronger condition \eqref{eq: single-site assumption} with respect to some constants $a >0$ and $n > 2$. We say $\rho$ is \emph{even} if for every Borel measurable set $S \subset \mathbb{R}$, $\rho(S) = \rho(-S)$.
For $b \in \mathbb{R}$, we will write $\rho_b$ for the measure with density $e^{b|u|^{n}}$ with respect to $\rho$, where we implicitly assume $n=2$ when we do not require $\rho$ to satisfy \eqref{eq: single-site assumption}. 

Consider interactions $(J_{x, y})_{x, y \in V}$ on $V$ that satisfy conditions \eqref{C1} and \eqref{C2}.
We will sometimes assume also that the interactions are ferromagnetic, meaning that $J_{x, y} \geq 0$ for all $x, y \in V$, but this is not required in our regularity results.
Given a finite subset $\Lambda \subset V$, we denote by $\overline{E}(\Lambda,J)$ the set of unordered pairs of vertices $x, y \in V$ with at least one vertex in $\Lambda$ such that $J_{x, y} \neq 0$, and write elements of $\overline{E}(\Lambda,J)$ in the form $xy$. We define the model on $\Lambda$ with boundary conditions $\xi \in \mathbb{R}^{V}$, which we assume satisfy $\sum_{y \in V} |J_{x, y} \xi_y| < \infty$ for all $x \in \Lambda$.


\begin{definition}
    The finite-volume spin model on $\Lambda$ is the measure $\nu_{\Lambda, \beta, \rho, J}^{\xi}$ on $\mathbb{R}^{\Lambda}$ given by
\begin{equation}
    \label{long range definition}
    \mathrm{d} \nu_{\Lambda, \beta, \rho, J}^{\xi}[\varphi] = \frac{1}{Z_{\Lambda, \beta, \rho, J}^{\xi}}\exp(- \beta H_{\Lambda, J}^{\xi}(\varphi)) \prod_{x \in \Lambda} \mathrm{d} \rho(\varphi_x),
\end{equation}
where the partition function $Z_{\Lambda, \beta, \rho, J}^{\xi}$ is the normalising constant that makes $\nu_{\Lambda, \beta, \rho, J}^{\xi}$ a probability measure and $H_{\Lambda, J}^{\xi}(\varphi)$ is the Hamiltonian, given by
\begin{equation*}
    H_{\Lambda, J}^{\xi}(\varphi) = - \sum_{\substack{xy \in \overline{E}(\Lambda, J)\\x, y \in \Lambda}} J_{x, y} \varphi_x \varphi_y - \sum_{\substack{xy \in \overline{E}(\Lambda, J) \\ x \in \Lambda, \, y\in V \setminus \Lambda}} J_{x,y} \varphi_x \xi_y.
\end{equation*}
\end{definition}

Let us mention that assumption \eqref{eq: single-site n=2} is necessary for the model to be well-defined for all $\beta\geq 0$ due to the quadratic nature of the interactions:
\[
\varphi_x\varphi_y=-\frac{(\varphi_x-\varphi_y)^2}{2}+\frac{\varphi_x^2+\varphi_y^2}{2}.
\]
We write $\langle \cdot \rangle_{\Lambda, \beta, \rho, J }^{\xi}$ for the expectation with respect to the measure $\nu_{\Lambda, \beta, \rho, J}^{\xi}$, and for $\Lambda' \subset \Lambda$ write $\nu_{(\Lambda|\Lambda'), \beta, \rho, J}^{\xi}$ for the restriction of $\nu_{\Lambda, \beta, \rho, J}^{\xi}$ to events that only depend on spins in $\Lambda'$.
For a sequence $(\Lambda_i)_{i \geq 1}$ of finite subsets of $V$, we say $\Lambda_i \nearrow V$ if $\Lambda_i \subset \Lambda_{i+1}$ for all $i$ and $\bigcup_{i =1}^{\infty}\Lambda_i = V$.
We write $\Lambda \Subset V$ to denote that $\Lambda$ is a finite subset of $V$ and say that the family of measures $(\nu_{\Lambda, \beta, \rho, J}^{\xi})_{\Lambda \Subset V}$ is tight if for any $\Lambda' \Subset V$, the measures $\nu_{(\Lambda|\Lambda'), \beta, \rho, J}^{\xi}$ for $\Lambda' \subset \Lambda \Subset V$ are tight in the usual sense.

The measure $\nu_{\Lambda, \beta, \rho, J}^{\xi}$ satisfies the domain Markov property, which states that for any $\Lambda' \subset \Lambda$, $\psi \in \mathbb{R}^{\Lambda'}, \eta \in \mathbb{R}^{\Lambda \setminus \Lambda'},$
\begin{equation*}
    \mathrm{d} \nu_{\Lambda, \beta, \rho, J}^{\xi}[\varphi|_{\Lambda'} = \psi \mid \varphi|_{\Lambda \setminus \Lambda'} = \eta] = \mathrm{d} \nu_{\Lambda', \beta, \rho, J}^{\eta \cup \xi}[\psi],
\end{equation*}
where $\eta \cup \xi \in \mathbb{R}^{V}$ is the configuration which is equal to $\eta$ on $\Lambda \setminus \Lambda'$ and is equal to $\xi$ elsewhere.

Another useful property of the model is monotonicity in boundary conditions. Before stating this, we must first introduce the notion of an increasing function.

\begin{definition}
    We say that $g: \mathbb{R}^{\Lambda} \rightarrow \mathbb{R}$ is an increasing function if for any $\varphi, \varphi' \in \mathbb{R}^{\Lambda}$ with $\varphi_x \leq \varphi'_x$ for all $x \in \Lambda$, then $g(\varphi) \leq g(\varphi')$. We say an event $E$ is an increasing event if $\mathbbm{1}_{E}$ is an increasing function. For measures $\nu, \nu'$ on $\mathbb{R}^{\Lambda}$, we say that $\nu$ is stochastically dominated by $\nu'$ and write $\nu \preceq \nu'$ if $\nu[g] \leq \nu'[g]$ for any increasing function $g: \mathbb{R}^{\Lambda} \rightarrow \mathbb{R}$.
\end{definition}

\begin{proposition}
    \label{b.c monotonicity}
    Suppose $J$ is ferromagnetic and let $\xi, \xi' \in \mathbb{R}^{V}$.
    \begin{enumerate}
        \item[(i)] 
        If $\xi_y \leq \xi'_y$ for all $y \in V$, then 
        \[
        \nu_{\Lambda, \beta, \rho, J}^{\xi} \preceq \nu_{\Lambda, \beta, \rho, J}^{\xi'}.
        \]
        \item[(ii)]
        If $\rho$ is even and $|\xi_y| \leq \xi'_y$ for all $y \in V$, then the law of the absolute value field satisfies
        \[
        \nu_{\Lambda, \beta, \rho, J}^{\xi} ( |\cdot|) \preceq \nu_{\Lambda, \beta, \rho, J}^{\xi'} (| \cdot|).
        \]
    \end{enumerate}

\end{proposition}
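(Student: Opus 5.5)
Part (i) follows from the FKG lattice condition (Holley's criterion) together with the form of the Hamiltonian. For part (ii) I plan to use the Ginibre / Griffiths-type inequalities for even single-site measures, in the form of monotonicity of the absolute value field under increasing external fields.

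\textbf{Part (i).} Write the density of $\nu_{\Lambda,\beta,\rho,J}^{\xi}$ with respect to the product measure $\prod_{x\in\Lambda}\mathrm d\rho(\varphi_x)$ as
\[
\mu^{\xi}(\varphi)\propto \exp\Big(\beta\sum_{x,y\in\Lambda}J_{x,y}\varphi_x\varphi_y+\beta\sum_{x\in\Lambda}h^{\xi}_x\varphi_x\Big),\qquad h^{\xi}_x=\sum_{y\notin\Lambda}J_{x,y}\xi_y .
\]
This is well-defined by the summability assumption on $\xi$. Since $J\ge 0$, the quadratic part is supermodular in $\varphi$, because $\varphi_x\varphi_y$ has nonnegative mixed differences. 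The linear part is modular. Hence $\mu^{\xi}(\varphi\vee\varphi')\,\mu^{\xi'}(\varphi\wedge\varphi')\ge\mu^{\xi}(\varphi)\,\mu^{\xi'}(\varphi')$ needs only to be checked on the linear terms, where it reduces to
\[
\sum_x (h^{\xi'}_x-h^{\xi}_x)\bigl((\varphi\vee\varphi')_x-\varphi'_x\bigr)\ge 0 .
\]
This holds because $\xi\le\xi'$ and $J\ge0$ give $h^{\xi}\le h^{\xi'}$. Holley's inequality for product base measures on $\mathbb R^\Lambda$ then gives $\nu^{\xi}\preceq\nu^{\xi'}$. The base measure is not discrete, so to apply Holley I first truncate: I restrict $\rho$ to $[-M,M]$ and approximate by discrete measures, where the inequality is standard. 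I then pass to the limit, using the integrability \eqref{eq: single-site n=2} and bounded increasing test functions depending on finitely many coordinates. The limit is justified by dominated convergence, since the Gaussian moments of $\rho$ dominate $e^{\beta\sum J\varphi\varphi}$ on the finite set $\Lambda$.

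\textbf{Part (ii).} With $h^\xi$ defined as above, first use evenness of $\rho$ to reduce to nonnegative fields. Gibbs-measure symmetry alone does not allow flipping signs of individual $\xi_y$. Instead I use the following monotonicity statement: for even $\rho$ and $J\ge0$, the law of $|\varphi|$ under the field $h$ is dominated by its law under $|h|$, and is increasing in $|h|$ coordinatewise. The condition $|\xi|\le\xi'$ gives $|h^{\xi}_x|\le\sum_y J_{x,y}|\xi_y|\le h^{\xi'}_x$.

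To prove the monotonicity statement, I will show that for any increasing $g$ of $|\varphi|$, the derivative
\[
\partial_{h_x}\langle g(|\varphi|)\rangle_h=\beta\bigl(\langle g\,\varphi_x\rangle_h-\langle g\rangle_h\langle\varphi_x\rangle_h\bigr)
\]
is $\ge0$ when $h\ge 0$. This is a Ginibre-type inequality. It can be derived by duplication: introduce two independent copies $\varphi,\varphi'$ and change variables to $\varphi\pm\varphi'$. The needed positivity of correlations between $\varphi_x$ and increasing functions of $|\varphi|$ would come from the Ginibre (duplicated-variable) inequality, whose applicability to even $\rho$ I would check directly. For the sign reduction, I will use the pointwise bound $\cosh$-type symmetrisation: conditionally on $|\varphi|$, the density is
\[
\prod\rho(|\varphi_x|)\sum_{\sigma}\exp\Big(\beta\sum J|\varphi_x||\varphi_y|\sigma_x\sigma_y+\beta\sum h_x\sigma_x|\varphi_x|\Big),
\]
which is an Ising partition function in $\sigma$. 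The reduction then amounts to showing that the ratio of Ising partition functions $Z(|h'|,\,\cdot)/Z(h,\,\cdot)$ is increasing in $|\varphi|$. That in turn is Griffiths' second inequality, applied to $\log Z$ as a function of the couplings $|\varphi_x||\varphi_y|$ and fields $|\varphi_x|$. Combining this with FKG for the $|\varphi|$-marginal, which satisfies the lattice condition by the same Ising representation, gives the stochastic domination via Holley.

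The main obstacle is the Holley step in (ii). I need to verify that the density of $|\varphi|$, namely $\prod\rho\cdot Z_{\mathrm{Ising}}(|\varphi|)$, satisfies the two-measure lattice inequality. This needs $\log Z_{\mathrm{Ising}}$ to be supermodular in $|\varphi|$ and to have increments increasing in $h$. Both should follow from Griffiths' inequalities applied to the couplings $J|\varphi_x||\varphi_y|$. The lattice variable appears multiplicatively in both couplings and fields, so some care is needed, and I would check that case first.
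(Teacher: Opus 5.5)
\textbf{Part (i)} is essentially the paper's argument. The paper notes that $\mathrm{d}\nu^{\xi'}/\mathrm{d}\nu^{\xi}\propto F(\varphi)=\exp\bigl(\beta\sum J_{x,y}\varphi_x(\xi'_y-\xi_y)\bigr)$ is increasing and applies FKG for $\nu^{\xi}$; your Holley check encodes the same thing. Your displayed lattice inequality has $\xi$ and $\xi'$ swapped, though. As written, the linear terms give $(h^{\xi'}-h^{\xi})\cdot(\varphi-\varphi\vee\varphi')\le 0$. Your reduction corresponds to the correct condition $\mu^{\xi'}(\varphi\vee\varphi')\,\mu^{\xi}(\varphi\wedge\varphi')\ge\mu^{\xi'}(\varphi')\,\mu^{\xi}(\varphi)$.

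\textbf{Part (ii)} is where you genuinely differ. The paper gives no self-contained argument: it cites the absolute-value FKG inequality of Lammers--Ott and a modification of Lemma 2.13 of the random tangled currents paper. You rebuild it from the Ising representation of the $|\varphi|$-marginal. This works, with one attribution to fix and one part you can drop.

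\begin{enumerate}
\item[(a)] One Holley comparison suffices, because $|h^{\xi}|\le h^{\xi'}$. Under the nonnegative field $h^{\xi'}$, the $|\varphi|$-marginal is log-supermodular: the mixed partials of $\log Z_{\mathrm{Ising}}(a)$ equal $\beta J_{x,y}\langle\sigma_x\sigma_y\rangle$ plus a covariance, which are nonnegative by Griffiths I and II even though $a$ enters both couplings and fields.
\item[(b)] The ratio $Z_{\mathrm{Ising}}(a;h^{\xi'})/Z_{\mathrm{Ising}}(a;h^{\xi})$ is increasing in $a$. Its $a_x$-derivative involves $\langle\sigma_x\sigma_y\rangle_{h^{\xi'}}-\langle\sigma_x\sigma_y\rangle_{h^{\xi}}$ and $h^{\xi'}_x\langle\sigma_x\rangle_{h^{\xi'}}-h^{\xi}_x\langle\sigma_x\rangle_{h^{\xi}}$. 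When $h^{\xi}$ has mixed signs, Griffiths' second inequality does not control these, since it only compares nonnegative parameter sets. You need Ginibre's inequality for Ising spins, $\langle\sigma_A\rangle_{K'}\ge|\langle\sigma_A\rangle_K|$ whenever $|K|\le K'$ coordinatewise.
\item[(c)] With (a) and (b), $\mathrm{d}\nu^{\xi}(|\cdot|)/\mathrm{d}\nu^{\xi'}(|\cdot|)$ is decreasing, and positive association of $\nu^{\xi'}(|\cdot|)$ gives the domination directly.
\end{enumerate}

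So your separate derivative argument for monotonicity in nonnegative $h$ is unnecessary. That argument relied on a continuous-spin duplicated-variable inequality for general even $\rho$, which you left unchecked. Only Ising-spin correlation inequalities are needed.

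Your route buys a self-contained proof that shows exactly which inequalities are used. The paper's buys brevity at the cost of depending on external results.
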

\begin{proof}
    Let $g: \mathbb{R}^{\Lambda} \rightarrow \mathbb{R}$ be an increasing function. To prove the first statement, note that
    \begin{equation*}
        F(\varphi) \coloneq \exp(\beta H_{\Lambda, J}^{\xi}(\varphi) - \beta H_{\Lambda, J}^{\xi'}(\varphi)) = \exp\Bigg( \sum_{\substack{xy \in \overline{E}(\Lambda, J) \\ x \in \Lambda, \, y\in V \setminus \Lambda}} \beta J_{x,y} \varphi_x (\xi'_y -\xi_y) \Bigg)
    \end{equation*}
    is an increasing function. Using the FKG inequality \cite[Theorem 4.4.1]{Glimm-Jaffe}, we obtain
    \begin{equation*}
        \langle g \rangle_{\Lambda, \beta, \rho, J }^{\xi'} = \frac{\langle F g \rangle_{\Lambda, \beta, \rho, J }^{\xi}}{\langle F \rangle_{\Lambda, \beta, \rho, J }^{\xi}} \geq \langle g \rangle_{\Lambda, \beta, \rho, J }^{\xi}.
    \end{equation*}
    The second statement follows from the absolute value FKG inequality \cite[Corollary 6.4]{LammersOtt2021} and a small modification to the proof of \cite[Lemma 2.13]{random_tangled}.
\end{proof}

\subsection{Definition of $A(x,\Lambda)$}
\label{section: bc notation}
In this section, we introduce the functions $A(x, \Lambda)$ and $\tilde{A}(x, \Lambda)$. Before stating the formal definitions, we give the following rough description. Consider a walk from $x$ to some vertex $z \in V$ and assign a value to each vertex along the walk, with the value at $z$ proportional to $|\xi_z|$. The value at each vertex moving away from $z$ drops by an amount depending on the interaction strength between the vertices, and we set $\tilde{A}(x, \Lambda)$ to be the maximum over all walks of the value at $x$. The function $A(x, \Lambda)$ is similar, but here we view the boundary conditions $\xi$ as an external field by combining the contributions from all the vertices in $V \setminus \Lambda$ that interact with a given vertex $x \in \Lambda$ into a single point $h_{x,\Lambda}$, defined as
\begin{equation*}
    h_{x, \Lambda} = \sum_{y \in V \setminus \Lambda} J_{x, y} \xi_y.
\end{equation*}
Doing this will ensure that $A(x, \Lambda)$ is finite for any finite subset $\Lambda \subset V$, since our assumptions on the boundary conditions $\xi$ imply that $|h_{x,\Lambda}| < \infty$ for all $x \in \Lambda$.

We now give the definitions of $A(x, \Lambda)$ and $\tilde{A}(x, \Lambda)$, first stating what we mean by a walk.
    Let $R, S, T \subset V$.
    \begin{itemize}
    \item
    We say that a sequence of (not necessarily distinct) vertices $x_0, x_1, \ldots, x_m \in V$ is a walk from $S$ to $T$ in $\overline{(R, J)}$ if $x_0 \in S, \, x_m \in T$, $x_1, \ldots, x_{m-1} \in R$, and $J_{x_{i-1}, x_i} \neq 0$ for all $i \in \{1, \ldots, m\}$.
    \item
    We say that $R$ is $J$-connected if for any $x, y \in R$, there exists a walk from $x$ to $y$ in $\overline{(R, J)}$.
    \end{itemize}
\noindent When discussing walks to or from a singleton $\{x\}$, we may write it as $x$ instead.

We first treat the $n>2$ case. We give concrete examples in Sections~\ref{section:nearest-neighbour} and \ref{Section: long-range}.
For $n>2$, $R \subset V$, $x \in R$ and $C \geq 1$, define $\tilde{A}(x,R)=\tilde{A}(x, R, \xi, C, J, f, n)$, to be the smallest $A \geq 1$ such that for any $z \in V$ and any walk $x_0, \ldots, x_m$ from $x$ to $z$ in $\overline{(R, J)}$,
\begin{equation*}
    |\xi_z| \leq C A^{(n-1)^{m}} \prod_{i =1}^{m} f(J_{x_{i-1}, x_{i}})^{(n-1)^{m-i}}.
\end{equation*}
if such $A$ exists, and otherwise we say that $\tilde{A}(x, R) = \infty$. Note that $\tilde{A}(x, R)$ is increasing in $R$. 
We also define $A(x,R)=A(x, R, \xi, C, J, f, n)$ to be the smallest $A \geq 1$ such that for any $y \in R$ and any walk $x_0, x_1,\ldots, x_{m}$ from $x$ to $y$ in $\overline{(R, J)}$,
\begin{equation*}
    |h_{y, R}| \leq \left( \sum_{z \in V \setminus R} |J_{y, z}| f(J_{y,z}) \right) C A^{(n-1)^{m+1}} \prod_{i =1}^{m} f(J_{x_{i-1}, x_{i}})^{(n-1)^{m+1-i}},
\end{equation*}
and say that $A(x, R) = \infty$ if no such $A$ exists.

We now give the definition in the $n=2$ case. For $R \subset V$, $x \in R$, $C\geq 1$ and $\lambda \geq 1$, we define $\tilde{A}(x, R) = \tilde{A}(x, R, \lambda, \xi, C, J, f)$ to be the smallest $A\geq 1$ such that for any $z \in V$ and any walk $x_0, x_1, \ldots, x_m$ from $x$ to $z$ in $\overline{(R, J)}$,
\begin{equation*}
    |\xi_z| \leq C A \lambda^{m} \prod_{i=1}^{m} f(J_{x_{i-1}, x_i}).
\end{equation*}
We also define
$A(x, R) = A(x, R, \lambda, \xi, C, J, f)$ to be the smallest $A\geq 1$ such that for any walk $x_0, x_1, \ldots, x_m$ from $x$ to $R$ in $\overline{(R, J)}$,
\begin{equation*}
    |h_{x_m, R}| \leq \left(\sum_{z \in V \setminus R} |J_{x_m, z}| f(J_{x_m, z}) \right) C A \lambda^{m+1} \prod_{i=1}^{m} f(J_{x_{i-1}, x_i}).
\end{equation*}

We will frequently drop the parameters $\xi, C, J, f, n, \lambda$ from the notation when they are clear from the context.
In both cases $n>2$ and $n=2$, it follows (from Theorem \ref{main theorem} and Theorem \ref{Theorem: n=2} respectively) that we have tightness if $A(x, \Lambda)$ is bounded above by a function of $x$ that does not depend on $\Lambda$. When considering whether this is the case for a particular choice of boundary conditions, it may be more convenient to work with the function $\tilde{A}$ and use the fact that for any $\Lambda \Subset V$ and any $x \in \Lambda$ we have $A(x, \Lambda) \leq \tilde{A}(x, \Lambda) \leq \tilde{A}(x, V)$. To see why this is the case, let $y \in \Lambda$ and let $x_0, \ldots, x_m$ be a walk from $x$ to $y$ in $\overline{(\Lambda, J)}$. Suppose $n>2$ (the $n=2$ case is similar). For any $z \in V$ with $J_{y,z} \neq 0$, the definition of $\tilde{A}(x, \Lambda)$ applied to the walk $x_0, \ldots, x_m, z$ implies that
\begin{equation*}
    |\xi_z| \leq C \tilde{A}(x, \Lambda)^{(n-1)^{m+1}} f(J_{y,z}) \prod_{i =1}^{m} f(J_{x_{i-1}, x_{i}})^{(n-1)^{m+1-i}},
\end{equation*}
hence
\begin{align*}
    |h_{y, \Lambda}| &\leq \sum_{z \in V \setminus \Lambda} |J_{y, z}| |\xi_z|\\
    &\leq \left( \sum_{z \in V \setminus \Lambda} |J_{y, z}| f(J_{y,z}) \right) C \tilde{A}(x, \Lambda)^{(n-1)^{m+1}} \prod_{i =1}^{m} f(J_{x_{i-1}, x_{i}})^{(n-1)^{m+1-i}},
\end{align*}
which implies $A(x, \Lambda) \leq \tilde{A}(x, \Lambda)$. Since $\tilde{A}(x, R)$ is increasing in $R$, we obtain tightness if $\tilde{A}(x, V)$ is finite for all $x \in V$. For $n>2$, define $\Xi = \Xi(V, J, f, n)$ to be the set of boundary conditions for which this is the case. Similarly, for $n=2$, define $\Xi(\lambda)$ to be the set of boundary conditions for which $\tilde{A}(x, V,\lambda)$ is finite for all $x \in V$.
See Sections \ref{section:nearest-neighbour} and \ref{Section: long-range} for examples of boundary conditions that are in $\Xi$ for different choices of interactions $J$.

We now state a lemma that allows us to compare the values of $A(x, R)$ and $A(y, R)$ or $\tilde{A}(x, R)$ and $\tilde{A}(y, R)$. In the case when $V$ is $J$-connected, this means that to determine whether given boundary conditions are in $\Xi$, it suffices to check whether $\tilde{A}(x, V)$ is finite for one vertex $x$.
\begin{lemma}
    \label{A comparison}
    Let $R \subset V$, $C \geq 1$, and $\xi \in \mathbb{R}^{V}$.
    For any walk $x_0, x_1, \ldots, x_k$ in $\overline{(R, J)}$ with $x_0, x_k \in R$, we have
    \begin{align*}
            &\text{(i) If } n > 2, \quad \tilde{A}(x_k, R) \leq \tilde{A}(x_0, R)^{(n-1)^{k}} \prod_{i=1}^{k} f(J_{x_{i-1}, x_i})^{(n-1)^{k-i}},\\
            &\text{(ii) If } n > 2, \quad A(x_k, R) \leq A(x_0, R)^{(n-1)^{k}} \prod_{i=1}^{k} f(J_{x_{i-1}, x_i})^{(n-1)^{k-i}},\\
            &\text{(iii) If $n=2$, \quad $\tilde{A}(x_k, R) \leq \tilde{A}(x_0, R) \lambda^{k} \prod_{i=1}^{k} f(J_{x_{i-1}, x_i})$},\\
            &\text{(iv) If $n=2$, \quad $A(x_k, R) \leq A(x_0, R) \lambda^{k} \prod_{i=1}^{k} f(J_{x_{i-1}, x_i})$}.
    \end{align*}
\end{lemma}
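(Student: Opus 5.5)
The plan is to concatenate walks. Take the walk $x_0,\dots,x_k$ from the statement. Let $w$ be any walk starting at $x_k$ of the kind appearing in the definition of $\tilde{A}(x_k,R)$ (resp.\ $A(x_k,R)$). Prepending $x_0,\dots,x_{k-1}$ to $w$ gives an admissible walk starting at $x_0$. We then apply the defining inequality for $\tilde{A}(x_0,R)$ (resp.\ $A(x_0,R)$) along this longer walk and read off that a certain explicit quantity $A'$ satisfies the defining inequality for $x_k$. By minimality this gives $\tilde{A}(x_k,R)\le A'$ (resp.\ $A(x_k,R)\le A'$). If the right-hand side is infinite there is nothing to prove, so we may assume $\tilde{A}(x_0,R)$ and $A(x_0,R)$ are finite.

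For (i), let $x_k=y_0,y_1,\dots,y_m=z$ be a walk from $x_k$ to $z$ in $\overline{(R,J)}$. Since $x_0,\dots,x_k\in R$ and $y_1,\dots,y_{m-1}\in R$, the concatenation $x_0,\dots,x_k,y_1,\dots,y_m$ is a walk of length $k+m$ from $x_0$ to $z$ in $\overline{(R,J)}$. The definition of $\tilde{A}(x_0,R)$ gives
\[
|\xi_z|\le C\,\tilde{A}(x_0,R)^{(n-1)^{k+m}}\prod_{i=1}^{k} f(J_{x_{i-1},x_i})^{(n-1)^{k+m-i}}\prod_{j=1}^{m} f(J_{y_{j-1},y_j})^{(n-1)^{m-j}}.
\]
The first two factors combine to
\[
\Bigl(\tilde{A}(x_0,R)^{(n-1)^{k}}\prod_{i=1}^k f(J_{x_{i-1},x_i})^{(n-1)^{k-i}}\Bigr)^{(n-1)^m}=(A')^{(n-1)^m},
\]
where $A'$ is the right-hand side of (i). Since $f\ge1$ and $\tilde{A}(x_0,R)\ge1$, we have $A'\ge1$, so $A'$ is admissible in the definition and $\tilde{A}(x_k,R)\le A'$.

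For (ii) the argument is the same. Take $y\in R$ and a walk from $x_k$ to $y$ in $\overline{(R,J)}$, and prepend $x_0,\dots,x_{k-1}$. The prefactor $\sum_{z\in V\setminus R}|J_{y,z}|f(J_{y,z})$ depends only on $y$, so it is unchanged. The exponents $(n-1)^{k+m+1}$ and $(n-1)^{k+m+1-i}$ factor as $(n-1)^{m+1}\cdot(n-1)^{k}$ and $(n-1)^{m+1}\cdot(n-1)^{k-i}$, which gives $(A')^{(n-1)^{m+1}}$ times the product over the $y$-walk.

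For (iii) and (iv), with $n=2$, the exponents collapse to $1$. The concatenated walk has length $k+m$, so $\lambda^{k+m}=\lambda^k\lambda^m$ and the $f$-products simply multiply. This gives $A'=A(x_0,R)\lambda^k\prod_{i=1}^{k} f(J_{x_{i-1},x_i})$, or the same expression with $\tilde{A}$, and $A'\ge1$ because $\lambda,f\ge1$.

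The only step needing care is checking that the concatenation is a legitimate walk in $\overline{(R,J)}$. This requires the interior vertices to lie in $R$, which is exactly why the statement assumes $x_0,x_k\in R$: the junction vertex $x_k$ becomes an interior vertex of the concatenated walk. Beyond that, the argument is bookkeeping of the exponents.
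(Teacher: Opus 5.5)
Your proposal is correct and follows the paper's proof essentially verbatim: prepend $x_0,\dots,x_{k-1}$ to an arbitrary walk from $x_k$, apply the defining inequality at $x_0$, and factor the exponents to see that the right-hand side of each inequality is admissible for $x_k$. You also explicitly check that $A'\ge 1$ and that the concatenation is a legitimate walk in $\overline{(R,J)}$, which the paper leaves implicit, and both checks are correct.
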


\begin{proof}
    The proofs of the first two and last two statements are very similar, so we only prove (i) and (iv) here.
    For (i), let $z \in V$ and let $y_0, y_1, \ldots, y_j$ be a walk from $x_k$ to $z$ in $\overline{(R, J)}$. Considering the walk $x_0, \ldots, x_k, y_1, \ldots, y_j$, then by definition of $\tilde{A}(x_{0}, R)$, we have
    \begin{equation*}
        |\xi_{z}| \leq C \tilde{A}(x_0, R)^{(n-1)^{k+j}}
        \left(\prod_{i = 1}^{k} f(J_{x_{i-1}, x_i})^{(n-1)^{k+j-i}} \right)
        \left(\prod_{i = 1}^{j} f(J_{y_{i-1}, y_i})^{(n-1)^{j-i}} \right).
    \end{equation*}
    Hence $\tilde{A}(x_0, R)^{(n-1)^{k+j}}
        \prod_{i = 1}^{k} f(J_{x_{i-1}, x_i})^{(n-1)^{k+j-i}}$ satisfies the requirements for $\tilde{A}(x_k, R)^{(n-1)^{j}}$, so
    \begin{align*}
        \tilde{A}(x_k, R)^{(n-1)^{j}} &\leq \tilde{A}(x_0, R)^{(n-1)^{k+j}}
        \left(\prod_{i = 1}^{k} f(J_{x_{i-1}, x_i})^{(n-1)^{k+j-i}} \right)\\
        &= \left( \tilde{A}(x_0, R)^{(n-1)^{k}}
        \prod_{i = 1}^{k} f(J_{x_{i-1}, x_i})^{(n-1)^{k-i}} \right)^{(n-1)^j}.
    \end{align*}
    Taking both sides to the power $1/(n-1)^j$ yields (i).
    
    To prove (iv), let $y \in R$ and let $y_0, y_1, \ldots, y_j$ be a walk from $x_k$ to $y$ in $\overline{(R, J)}$. Considering the walk $x_0, \ldots, x_k, y_1, \ldots, y_j$, then by definition of $A(x_{0}, R)$, we have
    \begin{equation*}
    |h_{y, R}| \leq \left(\sum_{z \in V \setminus R} |J_{y, z}| f(J_{y, z}) \right) C A(x_0,R) \lambda^{k+j+1} \prod_{i=1}^{k} f(J_{x_{i-1}, x_i})\prod_{i=1}^j f(J_{y_{i-1}, y_i}).
\end{equation*}
    This means that $A(x_0,R)\lambda^{k} \prod_{i=1}^{k} f(J_{x_{i-1}, x_i})$ satisfies the requirements for $A(x_k,R)$, which yields (iv).
\end{proof}

\subsection{Branching processes}
In the proof of our main regularity theorem, we will use a standard result on branching processes from \cite{Branching} to bound the size of the cluster where the spins take large values. Below we give the definition of a branching process and then state this result.

For a random variable $X$ taking values in the non-negative integers, a branching process with offspring distribution $X$ and initial population $k \in \mathbb{N}$ is a sequence of random variables $(Z_n)_{n \geq 0}$ such that $Z_0 = k$ and for all $n \geq 1$,
$Z_n = \sum_{i=1}^{Z_{n-1}} X_{n, i}$,
where $X_{n, i}$ are independent random variables with the same distribution as $X$.
We call $T\coloneq\sum_{n=0}^{\infty} Z_n$ the total progeny of the branching process.

\begin{theorem}[\!{\cite[Theorem 3.13]{Branching}}] \label{progeny theorem}
    For a branching process with offspring distribution $X$ and initial population $k$, the distribution of the total progeny $T$ is given by
    \begin{equation*}
        \mathbb{P}[T = n] = \frac{k}{n} \mathbb{P}[X_1 + \ldots + X_n = n-k],
    \end{equation*}
    where $X_1, \ldots, X_n$ are independent random variables with the same distribution as $X$.
\end{theorem}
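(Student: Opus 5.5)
The plan is the classical random-walk (Lukasiewicz) encoding of the branching process combined with a cyclic-shift (ballot-type) lemma, giving the hitting time theorem of Otter and Dwass. The first step is to replace the generation-by-generation description by a sequential exploration. Let $X_1, X_2, \ldots$ be i.i.d.\ copies of $X$, and explore individuals one at a time in breadth-first order, starting with the $k$ ancestors in the queue. Processing the $j$-th individual removes it from the queue and adds its $X_j$ children. Set
\begin{equation*}
    S_0 = k, \qquad S_j = k + \sum_{i=1}^{j} (X_i - 1),
\end{equation*}
so that $S_j$ is the number of individuals still waiting to be explored after $j$ steps. Since the $X_{m,i}$ are i.i.d., assigning them in breadth-first order gives a process with the same law as $(Z_m)_{m\geq 0}$. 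Moreover the total progeny satisfies $T = \inf\{j \geq 1 : S_j = 0\}$, with $T=\infty$ if the walk never reaches $0$. Hence, for each $n \geq 1$,
\begin{equation*}
    \mathbb{P}[T=n] = \mathbb{P}\bigl[S_n = 0,\ S_j > 0 \text{ for all } j<n\bigr].
\end{equation*}
Note that $S_n = 0$ is the same event as $X_1 + \cdots + X_n = n-k$.

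The second step, which is the core combinatorial input, is the cyclic lemma. Let $(x_1, \ldots, x_n)$ be non-negative integers with $\sum_i (x_i - 1) = -k$. Among the $n$ cyclic shifts $(x_{r+1}, \ldots, x_n, x_1, \ldots, x_r)$, $r = 0,\ldots,n-1$, exactly $k$ have the property that the walk started at $k$ first hits $0$ at time $n$; and if the sequence is periodic, counting shifts with multiplicity this holds exactly. To prove it, I would consider the walk $s_j = \sum_{i\le j}(x_i-1)$ for $j = 0,\ldots,n$ and extend it periodically with drift $-k$ per period. Because all steps are $\geq -1$, the walk takes every integer value between its extremes, so downward crossings of each level happen one unit at a time. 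The good shifts are exactly those $r$ at which $s_r$ attains a new strict minimum relative to the future within one period, equivalently $r$ is the last time at or below each of the levels $m, m-1, \ldots, m-k+1$, where $m$ is the overall minimum. Since the walk drops by exactly $k$ over one period, there are precisely $k$ such indices. I expect this counting to be the main obstacle, specifically handling ties so that each of the $k$ levels contributes exactly one index.

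The final step uses exchangeability. The vector $(X_1,\ldots,X_n)$ has the same law as each of its cyclic shifts. Hence
\begin{equation*}
    \mathbb{P}[T=n] = \frac{1}{n}\sum_{r=0}^{n-1}\mathbb{P}\bigl[\text{the $r$-th shift of } (X_1,\ldots,X_n) \text{ first hits } 0 \text{ at time } n\bigr] = \frac1n\,\mathbb{E}\bigl[N\,\mathbbm{1}_{\{X_1+\cdots+X_n=n-k\}}\bigr],
\end{equation*}
where $N$ is the number of good shifts. By the cyclic lemma $N = k$ on the event $\{X_1 + \cdots + X_n = n-k\}$, which yields the claimed formula $\mathbb{P}[T=n] = \frac{k}{n}\,\mathbb{P}[X_1+\cdots+X_n = n-k]$.
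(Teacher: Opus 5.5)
The paper gives no proof of this statement; it only quotes it. The usual textbook proof (e.g.\ the elementary argument of van der Hofstad and Keane) is by induction on $n$:
\begin{itemize}
\item condition on $X_1$;
\item apply the induction hypothesis with initial population $k+X_1-1$ and $n-1$ steps;
\item close with the exchangeability identity $\mathbb{E}[X_1\mid X_1+\dots+X_n=n-k]=(n-k)/n$.
\end{itemize}
Your route (random-walk encoding, cyclic lemma, averaging over rotations) is a genuinely different and sound strategy. It is pathwise, needs only cyclic exchangeability of $(X_1,\dots,X_n)$ for the walk statement, and explains $k/n$ as the proportion of good rotations. Your first and last steps are fine for $k\ge1$. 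When $k=0$ one has $T=0$ and both sides vanish for $n\ge1$, so your formula $T=\inf\{j\ge1:S_j=0\}$ should not be used there.

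The gap is in the step you flagged yourself: your description of the good rotations is false as written. Take $k=1$, $n=3$, $(x_1,x_2,x_3)=(2,0,0)$, so $(s_0,\dots,s_3)=(0,1,0,-1)$. The identity rotation is good, since the walk from $1$ is $1,2,1,0$. Yet $s_0$ is not a strict minimum of the subsequent values ($s_2=0$, $s_3=-1$), and in fact no $r\in\{0,1,2\}$ is. Your second description is also off: the levels $m-1,\dots,m-k+1$ lie below the overall minimum, so they are never visited within a period. The count cannot be carried out from either characterization.

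The correct characterization looks into the past, not the future.
\begin{itemize}
\item Rotation $r$ is good iff $k+s_{r+j}-s_r>0$ for $0\le j<n$, i.e.\ $s_{r+n}<s_i$ for $r\le i<r+n$.
\item Using $s_{i+n}=s_i-k$, this is equivalent to $s_r<s_i$ for all $i<r$ in the bi-infinite periodic extension. So $r$ is a strict running-minimum time; because downward steps equal $-1$, this is the same as the first visit to level $s_r$.
\item The running minimum $M_j=\min_{i\le j}s_i$ is finite, since $s_i\to+\infty$ as $i\to-\infty$. It satisfies $M_{j+n}=M_j-k$, and, again because downward steps are $-1$, it drops by exactly one at each strict running-minimum time.
\item Hence $\{0,\dots,n-1\}$ contains exactly $M_{-1}-M_{n-1}=k$ good indices. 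Ties cause no trouble, because each level has a unique first visit.
\end{itemize}
Within one period this says that the good $r$ are the first hitting times in $\{0,\dots,n-1\}$ of the levels $M,M+1,\dots,M+k-1$, where $M=\min_{0\le j\le n-1}s_j\le 1-k$. With this replacement your argument is complete.
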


\section{Proof of Theorem \ref{main theorem}}
\label{section: main proof}
In this section, we prove Theorem \ref{main theorem} by employing an exploration argument. We start by gathering some useful inequalities.

\begin{lemma} \label{edge removal general 1}
     Let $\beta>0$, $n\geq 2$, $\alpha_0\geq 1$ and $C\geq \alpha_0$. For every $x, y \in V$, $|\varphi_x|\geq C$, and $t_x, t_y \in [-\alpha_0, \alpha_0]$,
        \[
        \beta (|\varphi_x| |\varphi_y| + |t_x||t_y|) \leq 
        \frac{2\beta}{C^{n-2}}(|\varphi_x|^{n}+|\varphi_y|^{n} ).
        \]
\end{lemma}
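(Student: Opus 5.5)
We have to show $|\varphi_x||\varphi_y| + |t_x||t_y| \le 2C^{2-n}(|\varphi_x|^n+|\varphi_y|^n)$ whenever $|\varphi_x|\ge C\ge\alpha_0\ge 1$ and $|t_x|,|t_y|\le\alpha_0$. The factor $\beta>0$ appears on both sides, so we may cancel it. The plan is to bound each of the two terms on the left separately by $C^{2-n}(|\varphi_x|^n+|\varphi_y|^n)$ and then add the two bounds.

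\emph{The term $|t_x||t_y|$.} We have $|t_x||t_y|\le\alpha_0^2\le C^2$. Since $|\varphi_x|\ge C$, we get $C^2 = C^{2-n}C^n\le C^{2-n}|\varphi_x|^n$. Hence
\[
|t_x||t_y|\le C^{2-n}(|\varphi_x|^n+|\varphi_y|^n).
\]

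\emph{The term $|\varphi_x||\varphi_y|$.} Split into two cases according to the size of $|\varphi_y|$.
\begin{itemize}
\item If $|\varphi_y|\le|\varphi_x|$, then $|\varphi_x||\varphi_y|\le|\varphi_x|^2$. Because $|\varphi_x|\ge C$ and $n\ge 2$, we have $|\varphi_x|^{n-2}\ge C^{n-2}$, so $|\varphi_x|^2 = |\varphi_x|^n|\varphi_x|^{2-n}\le C^{2-n}|\varphi_x|^n$.
\item If $|\varphi_y|>|\varphi_x|\ge C$, then $|\varphi_x||\varphi_y|\le|\varphi_y|^2$. The same argument, now using $|\varphi_y|\ge C$, gives $|\varphi_y|^2\le C^{2-n}|\varphi_y|^n$.
\end{itemize}
In either case,
\[
|\varphi_x||\varphi_y|\le C^{2-n}\max(|\varphi_x|,|\varphi_y|)^n\le C^{2-n}(|\varphi_x|^n+|\varphi_y|^n).
\]

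Adding the two bounds and multiplying back by $\beta$ gives exactly the constant $2\beta C^{2-n}$ in the statement.

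The only step needing care is the case split in the second term. We cannot simply apply Young's inequality to $|\varphi_x||\varphi_y|$, because $|\varphi_y|$ may be small, and then $|\varphi_y|^n$ gives no usable power of $C$. Comparing with the larger of the two spins resolves this, since that spin is at least $C$ in absolute value. The hypothesis $n\ge2$ is what makes $u\mapsto u^{2-n}$ non-increasing, so that $|\varphi|^{2-n}\le C^{2-n}$ whenever $|\varphi|\ge C$.
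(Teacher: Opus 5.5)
Your proof is correct and is essentially the paper's argument: both rest on the inequality $u^2 \le C^{2-n}u^n$ for $u\ge C$, applied after a case split that guarantees the relevant spin has modulus at least $C$. The only difference is cosmetic. The paper first applies Young's inequality together with $|t_x|,|t_y|\le|\varphi_x|$ to reach $\frac{\beta}{2}(3|\varphi_x|^2+|\varphi_y|^2)$, then splits on whether $|\varphi_y|\ge C$ (so Young's inequality does work once combined with that split). You instead bound $|t_x||t_y|$ and $|\varphi_x||\varphi_y|$ separately, via $\alpha_0^2\le C^2$ and via $\max\{|\varphi_x|,|\varphi_y|\}^2$ respectively.
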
    
\begin{proof}
By Young's inequality and the fact that $|t_x|,|t_y|\leq |\varphi_x|$,
\begin{equation}\label{eq:Young inequality}
\beta (|\varphi_x| |\varphi_y| + |t_x||t_y|) \leq 
\frac{\beta}{2} (|\varphi_x|^{2} +  |\varphi_y|^{2}+|t_x|^2+|t_y|^2)\leq \frac{\beta}{2}(3|\varphi_x|^{2}+|\varphi_y|^{2}).
\end{equation}
If $|\varphi_y|<C$, then \eqref{eq:Young inequality} implies that
\[
\beta (|\varphi_x| |\varphi_y| + |t_x||t_y|) \leq 2\beta |\varphi_x|^2\leq \frac{2\beta}{C^{n-2}}|\varphi_x|^n.
\]
If $|\varphi_y|\geq C$, then \eqref{eq:Young inequality} gives
\[
\beta (|\varphi_x| |\varphi_y| + |t_x||t_y|) \leq \frac{\beta}{2C^{n-2}}(3|\varphi_x|^{n}+|\varphi_y|^{n})\leq  \frac{3\beta}{2C^{n-2}}(|\varphi_x|^{n}+|\varphi_y|^{n}).
\]
This completes the proof.
\end{proof}

\begin{lemma}\label{edge removal general 2}
Let $\beta, a>0$, $n>2$, $\alpha_0\geq 1$ and $C\geq  \alpha_0+\left(\frac{4M_f\beta}{a}\right)^{\frac{1}{n-2}}$. If $|\varphi_x| \geq C$ and $|\varphi_y| \leq \frac{f(J_{x,y})}{C^{n-2}}|\varphi_x|^{n-1}$, then for any $t \in [-\alpha_0, \alpha_0]$,
\[
\beta (|\varphi_x| |\varphi_y| + |t||\varphi_y|) \leq \frac{a f(J_{x, y})}{2 M_f} |\varphi_x|^{n}.
\]
\end{lemma}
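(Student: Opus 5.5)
Since $|t|\le \alpha_0\le C\le|\varphi_x|$, the left-hand side satisfies
\[
\beta(|\varphi_x||\varphi_y|+|t||\varphi_y|)\le 2\beta|\varphi_x||\varphi_y|,
\]
so it suffices to bound $2\beta|\varphi_x||\varphi_y|$. Inserting the hypothesis $|\varphi_y|\le \frac{f(J_{x,y})}{C^{n-2}}|\varphi_x|^{n-1}$ directly gives
\[
2\beta|\varphi_x||\varphi_y|\le \frac{2\beta f(J_{x,y})}{C^{n-2}}|\varphi_x|^{n}.
\]
No Young's inequality is needed here, in contrast to Lemma~\ref{edge removal general 1}: the hypothesis already controls $|\varphi_y|$ by a power of $|\varphi_x|$ carrying exactly the factor $f(J_{x,y})$ we want.

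It remains to check that $\frac{2\beta}{C^{n-2}}\le \frac{a}{2M_f}$, i.e.\ $C^{n-2}\ge \frac{4M_f\beta}{a}$. Since $\alpha_0\ge 1>0$, we have $C\ge \alpha_0+\left(\frac{4M_f\beta}{a}\right)^{1/(n-2)}\ge \left(\frac{4M_f\beta}{a}\right)^{1/(n-2)}$. Raising to the power $n-2>0$ (monotone on $[0,\infty)$) gives $C^{n-2}\ge 4M_f\beta/a$, which completes the argument.

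There is no real obstacle. The only points to watch are that $f\ge 1>0$, so multiplying by $f(J_{x,y})$ preserves the inequality direction, and that $n>2$, so the exponent $1/(n-2)$ is well defined. The extra $\alpha_0$ in the lower bound for $C$ is used only to guarantee $|t|\le|\varphi_x|$.
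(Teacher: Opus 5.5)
Your proof is correct and is essentially the paper's argument: you substitute the hypothesis on $|\varphi_y|$, use $|t|\le\alpha_0\le C\le|\varphi_x|$ to lose at most a factor $2$ (the paper writes this as $1+\alpha_0/|\varphi_x|\le 2$), and conclude from $C^{n-2}\ge 4M_f\beta/a$. The only cosmetic difference is that you absorb $|t|$ into $|\varphi_x|$ before inserting the hypothesis, whereas the paper does so afterwards.
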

\begin{proof}
Note that since $|\varphi_y| \leq \frac{f(J_{x,y})}{C^{n-2}}|\varphi_x|^{n-1}$, we have
\begin{align*}  
    \beta |\varphi_y| (|\varphi_x| + |t|) -\frac{a f(J_{x, y})}{2M_{f}} |\varphi_x|^{n}
    &\leq \beta f(J_{x, y}) \frac{|\varphi_x|^{n-1}}{C^{n-2}} (|\varphi_x| + |t|)-\frac{a f(J_{x, y})}{2 M_{f}} |\varphi_x|^{n}\\
    &\leq f(J_{x, y}) |\varphi_x|^{n} \left( \frac{\beta}{C^{n-2}} \left(1 + \frac{\alpha_0}{|\varphi_x|}\right) - \frac{a}{2M_f} 
    \right).
\end{align*}
For every $C\geq \alpha_0 + (4a^{-1}M_f\beta)^\frac{1}{n-2}$, the above expression is negative when $|\varphi_x| \geq C$.      
\end{proof}

We now proceed with the proof of Theorem \ref{main theorem}. Recall that we consider $\Lambda'\subset \Lambda$. Our approach is based on an exploration process that builds the cluster $\mathcal{C}$ of vertices $x$ such that there exists a walk from $\Lambda'$ to $x$ where the spins take large values at each vertex along the walk. To accommodate the potentially large boundary conditions, we allow the minimum spin value needed to be in $\mathcal{C}$ to grow progressively as we move away from $\Lambda'$, ensuring that no vertices in $V \setminus \Lambda$ are included in $\mathcal{C}$. This gradual growth ensures that the conditions of Lemmas \ref{edge removal general 1} and \ref{edge removal general 2} are satisfied, which in turn enables us to isolate the vertices of $\mathcal{C}$ from their neighbours, at the cost of modifying the single-site measure. For those vertices in $\Lambda'$ where the spins take small values, we estimate their contribution to the Radon–Nikodym derivative directly.

\begin{proof}[Proof of Theorem \ref{main theorem}]
Fix a finite subset $\Lambda \subset V$ and let $\Lambda' \subset \Lambda$. We write $E$ for $\{xy \in \overline{E}(\Lambda, J): x,y \in \Lambda\}$ and $A_x$ for $A(x, \Lambda,  \xi, C, J, f, n)$, where $C \geq 1$ is a constant to be determined.
Define $\mathcal{C}$ to be the set of vertices $x \in \Lambda$ such that for some $m \in \{ 0, 1, \ldots\}$ there exists a walk $x_0, x_1, \ldots, x_m$ from $\Lambda'$ to $x$ in $\overline{(\Lambda, J)}$ that satisfies
\begin{equation}
    \label{cluster membership criteria}
    \forall k \in S_m \quad
    |\varphi_{x_k}| \geq C A_{x_0}^{(n-1)^{k}} \prod_{i = 1}^{k} f(J_{x_{i-1}, x_i})^{(n-1)^{k-i}},
\end{equation}
where $S_0 = \{0\}$ and $S_m = \{1, \ldots, m\}$ for $m \geq 1$.
For each $i \in \{ 0, 1, \ldots\}$, let $\mathcal{C}_i\subset \mathcal{C}$ denote the set of vertices for which $i$ is the smallest value of $m$ such that there exists a walk satisfying \eqref{cluster membership criteria}.
Note that if $x \in \mathcal{C}$, then $|\varphi_x| \geq C$, and if $y \in \Lambda'$, then Lemma \ref{A comparison} implies that $y\in \mathcal{C}$ if and only if $y \in \mathcal{C}_0$, that is, if and only if $|\varphi_y| \geq CA_y$.

Our aim is to prove that there exists $K \geq 0$ such that for any $V_1, V_2, \ldots$ pairwise disjoint subsets of $\Lambda \setminus \Lambda'$ we have
\begin{align}\label{eq:branching process comparison}
        &\mathrm{d}\nu_{\Lambda, \beta, \rho, J}^{\xi}  \left[
        \varphi|_{\Lambda'}, \, (\mathcal{C}_i)_{i \geq 1} = (V_i)_{i \geq 1} \right] \leq \\
        \nonumber
        &\exp \left(K|\Lambda'\cup V'| \right) \left( \prod_{x \in \Lambda'} e^{\alpha_1 M_f A_{x}^{n}} \mathrm{d} \rho_{\frac{a}{2}}(\varphi_x) \right)
        \prod_{i = 0}^{\infty} \prod_{y \in V_{i+1}} \max_{x \in V_i} p_{x, y},
    \end{align}
where $\alpha_1=2\beta C^2$, $V_0 = \Lambda'$, $V' = \bigcup_{i = 1}^{\infty} V_i$, and
\begin{equation*}
    p_{x, y} = \begin{cases}
    \int_{|u| \geq C f(J_{x,y})} \mathrm{d} \rho_{\frac{a}{2}}(u)  & \text{if } J_{x,y} \neq 0, \\ 0 & \text{if } J_{x,y} = 0. \end{cases}
\end{equation*}
Here $(p_{x, y})_{x, y \in V}$ can be interpreted as the offspring distribution of a branching process in the sense that $y$ is a child of $x$ with probability $p_{x, y}$. The distribution of the number of children of vertex $x$ in this process depends on $x$, but we can get a uniform control of the distribution by tuning the value of $C$. 

Assume that \eqref{eq:branching process comparison} holds for now. Summing \eqref{eq:branching process comparison} over all possibilities $(V_j)$ for $(\mathcal{C}_j)$ and noting that $\Lambda' \cap V' = \emptyset$ on the event $\{(\mathcal{C}_i)_{i \geq 1} = (V_i)_{i \geq 1}\}$, we obtain
\begin{align}
        \mathrm{d}\nu_{\Lambda, \beta, \rho, J}^{\xi}  \left[
        \varphi|_{\Lambda'}\right] \leq 
        \nonumber
        \left( \prod_{x \in \Lambda'} e^{\alpha_1 M_f A_{x}^{n}} \mathrm{d} \rho_{\frac{a}{2}}(\varphi_x) \right) \sum_{(V_j)}
        e^{K(|\Lambda'|+|V'|)}\prod_{i = 0}^{\infty} \prod_{y \in V_{i+1}} \max_{x \in V_i} p_{x, y}.
\end{align}
We can thus conclude by applying Lemma~\ref{lem:inside proof 2}, which bounds the right-hand side.

Let us now prove \eqref{eq:branching process comparison}. 
    Let $\alpha_0 \geq 1$ be such that $\rho_{a}([-\alpha_0, \alpha_0]) > 0$.
    Given $(\mathcal{C}_i)_{i \geq 1} = (V_i)_{i \geq 1}$ with $\bigcup_{i = 1}^{\infty} V_i = V'$, let $t \in [-\alpha_0, \alpha_0]^{\Lambda' \cup V'}$ and define $\tilde{\varphi} \in \mathbb{R}^{\Lambda}$ to be the configuration with $\tilde{\varphi}_x = t_x$ for $x \in \Lambda' \cup V'$ and $\tilde{\varphi_x} = \varphi_x$ for $x \in \Lambda \setminus (\Lambda' \cup V')$.
    Comparing the value of the integrand at $\varphi$ with its value at $\tilde{\varphi}$ and setting $\alpha_1 = 2 \beta C^{2}$, we will see that we can choose $C$ large enough that
    \begin{align}
    \label{compactified inequality}
    &\beta J_{x,y} \varphi_x \varphi_y \leq \beta J_{x,y} \tilde{\varphi}_x \tilde{\varphi}_y\\
    \nonumber 
    & + |J_{x,y}| f(J_{x, y}) \left(\frac{a|\varphi_x|^{n}}{2M_f}\mathbbm{1}_{x\in \mathcal{C}}+\alpha_1  A_x^n\mathbbm{1}_{x\in \Lambda' \setminus \mathcal{C}}+\frac{a|\varphi_y|^{n}}{2M_f}\mathbbm{1}_{y\in \mathcal{C}}+\alpha_1  A_y^n \mathbbm{1}_{y\in \Lambda' \setminus \mathcal{C}} \right).
    \end{align}
    We now verify \eqref{compactified inequality} by considering an edge $xy \in E$ and splitting into cases based on whether $x$ and $y$ are in $\mathcal{C}$, $\Lambda' \setminus \mathcal{C}$, or $\Lambda \setminus (\Lambda' \cup \mathcal{C})$.
    In each case we will use the bound $\beta J_{x,y}(\varphi_x \varphi_y - \tilde{\varphi}_x \tilde{\varphi}_y) \leq \beta |J_{x,y}| (|\varphi_x| |\varphi_y| + |\tilde{\varphi}_x| |\tilde{\varphi}_y|)$.
    \begin{itemize}
        \item
        If $x, y \in \mathcal{C}$, then $|\varphi_x|, |\varphi_y| \geq C$, so by Lemma \ref{edge removal general 1}, if $C\geq \alpha_0+\left(\frac{4M_f\beta}{a}\right)^{\frac{1}{n-2}}$, then
        \begin{equation*}
        \beta (|\varphi_x| |\varphi_y| + |t_x||t_y|) \leq 
        \frac{a f(J_{x, y})}{2 M_f} ( |\varphi_x|^{n} +  |\varphi_y|^{n}).
        \end{equation*}
        
        \item
        If $x \in \mathcal{C}$ and $y \in \Lambda \setminus (\Lambda' \cup \mathcal{C})$,
        then there exists a walk $x_0, x_1, \ldots, x_m$ from $\Lambda'$ to $x$ in 
        $\overline{(\Lambda, J)}$ satisfying \eqref{cluster membership criteria},
        but there is no such walk from $\Lambda'$ to $y$. Hence the walk $x_0,  \ldots, x_m, y$ does not satisfy \eqref{cluster membership criteria}, so
        \begin{align*}
            |\varphi_y| &\leq  C A_{x_0}^{(n-1)^{m+1}} f(J_{x, y}) \prod_{i = 1}^{m} f(J_{x_{i-1}, x_i})^{(n-1)^{m+1-i}}\\
            &= \frac{f(J_{x, y})}{C^{n-2}} \left(C A_{x_0}^{(n-1)^{m}} \prod_{i = 1}^{m} f(J_{x_{i-1}, x_i})^{(n-1)^{m-i}} \right)^{n-1} 
            \leq \frac{f(J_{x, y})}{C^{n-2}} |\varphi_x|^{n-1}.
        \end{align*}
        Combining the latter with Lemma \ref{edge removal general 2} we get
        \begin{equation*}
            \beta (|\varphi_x| |\varphi_y| + |t_x||\varphi_y|) \leq \frac{a f(J_{x, y})}{2 M_f} |\varphi_x|^{n}.
        \end{equation*}

        \item
        If $x \in \mathcal{C}$ and $y \in \Lambda' \setminus \mathcal{C}$, then applying Lemma~\ref{edge removal general 1} with $C \geq \alpha_0+\left(\frac{4M_f\beta}{a}\right)^{\frac{1}{n-2}}$ and using that $|\varphi_y|\leq CA_y$, we get 
        \begin{align*}
            \beta (|\varphi_x| |\varphi_y| + |t_x| |t_y|)
            &\leq \frac{2\beta}{C^{n-2}}(|\varphi_x|^{n}+(CA_y)^n)
            \leq \frac{a f(J_{x, y})}{2 M_f}|\varphi_x|^{n} + 2\beta C^{2} A_{y}^{n}.
        \end{align*}

        \item 
        If $x, y \in \Lambda' \setminus \mathcal{C}$, then $|\varphi_x| \leq CA_x$ and $|\varphi_y| \leq CA_y  \leq CA_{x}^{n-1}f(J_{x, y})$ by Lemma \ref{A comparison}, so
        \begin{equation*}
            \beta (|\varphi_x| |\varphi_y| + |t_x| |t_y|) \leq 
            2 \beta C^{2}  A_x^n f(J_{x,y}).
        \end{equation*}

        \item
        If $x \in \Lambda' \setminus \mathcal{C}$ and $y \in \Lambda \setminus (\Lambda' \cup \mathcal{C})$,
        then $|\varphi_x| \leq CA_x$ and $|\varphi_y| \leq CA_{x}^{n-1}f(J_{x, y})$, so
        \begin{align*}
            \beta (|\varphi_x| |\varphi_y| + |t_x||\varphi_y|) \leq  2 \beta C^{2}  A_x^n f(J_{x,y}).
        \end{align*}
    \end{itemize}

    The terms $e^{\beta h_{x,\Lambda} \varphi_x}$ coming from interaction with the spins outside $\Lambda$ can be bounded similarly, using that
    if $x \in \mathcal{C}$, then by definition of $\mathcal{C}$ and $A_x$ we have
    \[
    |h_{x, \Lambda}| \leq \left(\sum_{y \in V \setminus \Lambda} |J_{x, y}| f(J_{x,y}) \right) \frac{|\varphi_x|^{n-1}}{C^{n-2}},
    \]
    so Lemma \ref{edge removal general 2} applies.
    If $x \in \Lambda' \setminus \mathcal{C}$ then we bound $|h_{x, \Lambda}|$ using the definition of $A_x$. Overall, we obtain
    \begin{equation}
        \label{compactified inequality 2}
        \beta h_{x,\Lambda} \varphi_x \leq \beta h_{x, \Lambda} \tilde{\varphi}_x +  \left( \frac{a|\varphi_x|^{n}}{2M_f}\mathbbm{1}_{x\in \mathcal{C}}+\alpha_1  A_x^n\mathbbm{1}_{x\in \Lambda' \setminus \mathcal{C}} \right) \sum_{y \in V \setminus \Lambda} |J_{x, y}| f(J_{x,y}).
    \end{equation}

Now \eqref{eq:branching process comparison} follows from Lemma \ref{lem:inside proof} below, which we also state in the case $n=2$.
It only remains to show that we can choose $C$ and $\tilde{C}$ of the required form. Note that $C$ depends on $\beta$ only through the condition $C\geq \alpha_0+\left(\frac{4M_f\beta}{a}\right)^{\frac{1}{n-2}}$ when applying Lemmas \ref{edge removal general 1} and \ref{edge removal general 2}. We can choose $\tilde{C} = \log(\alpha_2 \int_{\mathbb{R}} \mathrm{d} \rho_{\frac{a}{2}}(u)) + \alpha_1 M_f$, where $\alpha_2$ is given by Lemma \ref{lem:inside proof 2} and does not depend on $\beta$. Hence the only dependence of $\tilde{C}$ on $\beta$ comes from $\alpha_1 = 2 \beta C^{2}$. 
\end{proof}

\begin{lemma}\label{lem:inside proof}
Assume that \eqref{C1}, \eqref{C2}, \eqref{compactified inequality} and \eqref{compactified inequality 2} hold for some $n\geq 2$, $a>0,\alpha_1>0$ and $A_x\geq 1$, where $\tilde{\varphi}$ in \eqref{compactified inequality} and \eqref{compactified inequality 2} is defined as above for $\alpha_0 \geq 1$ such that $\rho_{a}([-\alpha_0, \alpha_0]) > 0$. Then there exists a constant $K=K(a,\alpha_0) \geq 0$ such that
\begin{align*}
        &\mathrm{d}\nu_{\Lambda, \beta, \rho, J}^{\xi}  \left[
        \varphi|_{\Lambda'}, \, (\mathcal{C}_i)_{i \geq 1} = (V_i)_{i \geq 1} \right] \leq \\
        \nonumber
        &\exp \left(K|\Lambda'\cup V'| \right) \left( \prod_{x \in \Lambda'} e^{\alpha_1 M_f A_{x}^{n}} \mathrm{d} \rho_{\frac{a}{2}}(\varphi_x) \right)
        \prod_{i = 0}^{\infty} \prod_{y \in V_{i+1}} \max_{x \in V_i} p_{x, y}.
    \end{align*}    
\end{lemma}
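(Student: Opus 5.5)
The approach is a change of variables: replace the spins of $\Lambda'\cup V'$ by compactly supported values $t$ and average over $t$. This lets the partition function absorb the original configuration at a cost of $e^{K}$ per vertex. Fix $\varphi|_{\Lambda'}$ and $(V_i)_{i\geq 1}$, and write the joint density as
\[
\frac{1}{Z}\int \mathbbm{1}\{(\mathcal{C}_i)_{i \geq 1}=(V_i)_{i \geq 1}\}\, e^{-\beta H^{\xi}_{\Lambda,J}(\varphi)} \prod_{x\in \Lambda\setminus \Lambda'}\mathrm{d}\rho(\varphi_x)\prod_{x\in\Lambda'}\mathrm{d}\rho(\varphi_x).
\]
First, sum \eqref{compactified inequality} over all $xy\in E$ and \eqref{compactified inequality 2} over all $x\in\Lambda$. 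Each vertex $x$ then collects error terms of the form $\frac{a|\varphi_x|^n}{2M_f}\mathbbm{1}_{x\in\mathcal{C}}+\alpha_1A_x^n\mathbbm{1}_{x\in\Lambda'\setminus\mathcal{C}}$, multiplied by $\sum_{y\in V}|J_{x,y}|f(J_{x,y})\leq M_f$ using \eqref{C2}. This gives
\[
-\beta H(\varphi)\leq -\beta H(\tilde\varphi)+\sum_{x\in\mathcal{C}}\tfrac a2|\varphi_x|^n+\sum_{x\in\Lambda'\setminus\mathcal{C}}\alpha_1M_fA_x^n .
\]
The factor $e^{\frac a2|\varphi_x|^n}$ on $\mathcal{C}$ is absorbed by writing $\mathrm{d}\rho(\varphi_x)=e^{-\frac a2|\varphi_x|^n}\mathrm{d}\rho_{\frac a2}(\varphi_x)$. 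On $\Lambda'\setminus\mathcal{C}$ we bound $\mathrm{d}\rho\leq \mathrm{d}\rho_{\frac a2}$ directly. This produces the factors $e^{\alpha_1M_fA_x^n}\mathrm{d}\rho_{\frac a2}(\varphi_x)$ for $x\in\Lambda'$.

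Second, the inequality holds for every $t\in[-\alpha_0,\alpha_0]^{\Lambda'\cup V'}$. Average it against the probability measure $\prod_{x}\mathrm{d}\rho(t_x)\mathbbm{1}_{|t_x|\le\alpha_0}/\rho([-\alpha_0,\alpha_0])$; note that $\rho([-\alpha_0,\alpha_0])>0$ because $\rho_a([-\alpha_0,\alpha_0])>0$. This costs a factor $\rho([-\alpha_0,\alpha_0])^{-|\Lambda'\cup V'|}=:e^{K|\Lambda'\cup V'|}$. After this averaging, the integrand $e^{-\beta H(\tilde\varphi)}\prod_{x\notin\Lambda'\cup V'}\mathrm{d}\rho(\varphi_x)\prod\mathrm{d}\rho(t_x)$, integrated over $t$ and over $\varphi|_{\Lambda\setminus(\Lambda'\cup V')}$, is at most $Z$. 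Here we drop the indicator restrictions on those variables, which is legitimate since everything is nonnegative. What remains outside is the integral over $\varphi|_{V'}$ of $\prod_{x\in V'}\mathrm{d}\rho_{\frac a2}(\varphi_x)$, restricted to the event.

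The main obstacle is to handle the coupling between $\varphi|_{V'}$ and the event, since $\tilde\varphi$ does not depend on $\varphi|_{V'}$ once $V'$ is fixed. The key point is that the indicator of $\{(\mathcal{C}_i)=(V_i)\}$ depends on all of $\varphi$, but only the necessary condition involving $\varphi|_{V'}$ needs to be kept. That condition is: each $y\in V_{i+1}$ has some $x\in V_i$ with $J_{x,y}\neq0$ and
\[
|\varphi_y|\geq C A_{x_0}^{(n-1)^{i+1}}\prod_{j=1}^{i+1} f(J_{x_{j-1},x_j})^{(n-1)^{i+1-j}}\geq Cf(J_{x,y}),
\]
using $A\geq1$ and $f\geq1$. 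Bound this indicator by $\sum_{x\in V_i}\mathbbm{1}\{|\varphi_y|\ge Cf(J_{x,y}),\,J_{x,y}\ne0\}$ and integrate each $y$ independently against $\rho_{\frac a2}$. This yields at most $|V_i|\max_{x\in V_i}p_{x,y}$, not $\max_x p_{x,y}$. To avoid the extra combinatorial factor, instead use the indicator $\mathbbm{1}\{|\varphi_y|\geq C\min_{x\in V_i,J_{x,y}\neq0}f(J_{x,y})\}$; its $\rho_{\frac a2}$-integral equals $\max_{x\in V_i}p_{x,y}$ exactly. Multiplying these factors over $y$ and $i$ gives the stated bound.
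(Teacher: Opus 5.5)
Apart from one step, your argument is the paper's. You collect the errors from \eqref{compactified inequality} and \eqref{compactified inequality 2} via \eqref{C2}, turn $\mathrm{d}\rho$ into $\mathrm{d}\rho_{\frac a2}$ on $\Lambda'\cup V'$, replace those spins by $t$, and rebuild $Z$. Averaging $t$ against $\rho$ restricted to $[-\alpha_0,\alpha_0]$ is a slightly cleaner form of \eqref{eq:min}, giving $K=\max\{0,-\log\rho([-\alpha_0,\alpha_0])\}$. Your single-threshold indicator correctly gives $\max_x p_{x,y}$ rather than a sum.

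\textbf{The step that fails.} You claim that every $y\in V_{i+1}$ has some $x\in V_i$ with $J_{x,y}\neq0$ and $|\varphi_y|\ge Cf(J_{x,y})$. You take $x$ to be the penultimate vertex $x_i$ of a valid walk of length $i+1$. But minimality of $i+1$ for $y$ does not force $x_i\in\mathcal{C}_i$. The thresholds in \eqref{cluster membership criteria} depend on the starting point through $A_{x_0}$ (and on the $f$-weights), so $x_i$ may have a shorter valid walk whose threshold at $x_i$ is larger and whose extension to $y$ fails.

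\textbf{Counterexample.} Take nearest-neighbour interactions on a tree with $f(1)=1$ and $\Lambda'=\{o_1,o_2\}$. Let $\Lambda$ contain a path $o_1\sim x\sim w\sim o_2$ and a vertex $y$ whose only neighbour in $\Lambda$ is $x$. Let $\xi$ vanish except for one large value adjacent to a boundary vertex $b$ with $d_\Lambda(o_2,b)=d_\Lambda(o_1,b)+3$. The nearest-neighbour formula for $A$ then gives $A_{o_1}=A_{o_2}^{(n-1)^3}>1$. Consider the event $\{|\varphi_w|\ge CA_{o_2}^{n-1},\ |\varphi_x|\ge CA_{o_1}^{n-1},\ CA_{o_1}\le|\varphi_y|<CA_{o_1}^{(n-1)^2}\}$. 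On it:
\begin{itemize}
\item the walk $o_1,x$ puts $x$ in $\mathcal{C}_1$;
\item the only walk of length at most $2$ from $\Lambda'$ to $y$ is $o_1,x,y$, which fails at $y$;
\item $o_2,w,x,y$ is valid, because $CA_{o_1}^{n-1}\ge CA_{o_2}^{(n-1)^2}$ and $CA_{o_2}^{(n-1)^3}=CA_{o_1}$.
\end{itemize}
So $y\in\mathcal{C}_3$ while its only neighbour in $\Lambda$ lies in $\mathcal{C}_1$. Hence $p_{x',y}=0$ for all $x'\in V_2$, and the right-hand side of the lemma is $0$ (or involves an empty maximum). Yet for, say, $\mathrm{d}\rho(s)=e^{-P(s)}\mathrm{d}s$ this event has positive probability. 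With generations defined by minimal walk length, the inequality cannot hold here.

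\textbf{Repair.} The paper's proof makes the same assertion just before \eqref{eq: integral p bound}, so the gap is inherited, but it is easy to close. Along a valid walk, every vertex after $x_0$ lies in $\mathcal{C}$, and consecutive vertices satisfy $J_{x_{k-1},x_k}\neq0$ and $|\varphi_{x_k}|\ge Cf(J_{x_{k-1},x_k})$, since $A_{x_0},f\ge1$. So define the generations as breadth-first layers for this relation. Set $\mathcal{L}_0=\Lambda'$, and let $\mathcal{L}_{i+1}$ be the set of $y\in\mathcal{C}\setminus(\mathcal{L}_0\cup\dots\cup\mathcal{L}_i)$ for which some $x\in\mathcal{L}_i$ has $J_{x,y}\neq0$ and $|\varphi_y|\ge Cf(J_{x,y})$. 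Then:
\begin{itemize}
\item following a valid walk from its last visit to $\Lambda'$ shows these layers still exhaust $\mathcal{C}\setminus\Lambda'$;
\item \eqref{compactified inequality} and \eqref{compactified inequality 2} see $\mathcal{C}$ only as a set;
\item the parent property now holds by definition.
\end{itemize}
With $\{(\mathcal{L}_i)_{i\ge1}=(V_i)_{i\ge1}\}$ in place of $\{(\mathcal{C}_i)_{i\ge1}=(V_i)_{i\ge1}\}$, your proof and Lemma \ref{lem:inside proof 2} go through unchanged.
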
    
\begin{proof}
For ease of notation, write $P_{\Lambda}(\varphi) = \prod_{x \in \Lambda} e^{-a|\varphi_x|^{n}}$ and
\[
\pi_{E}(\varphi) = \left(\prod_{xy \in E} e^{\beta J_{x, y} \varphi_x \varphi_y} \right) \left( \prod_{x \in \Lambda} e^{\beta h_{x, \Lambda} \varphi_x}\right).
\]
Note that \eqref{long range definition} gives that
\begin{align*}
    &\mathrm{d} \nu_{\Lambda, \beta, \rho, J}^{\xi} \left[
    \varphi|_{\Lambda'}, \, (\mathcal{C}_i)_{i \geq 1} = (V_i)_{i \geq 1} \right]
    = \frac{1}{Z_{\Lambda, \beta, \rho, J}^{\xi}} \int_{\mathbb{R}^{\Lambda \setminus \Lambda'}}
    \mathbbm{1}_{\{(\mathcal{C}_i)_{i \geq 1} = (V_i)_{i \geq 1}\}}
    \pi_{E}(\varphi) P_{\Lambda}(\varphi)
    \prod_{x \in \Lambda} \mathrm{d} \rho_{a}(\varphi_x),
\end{align*}
    where $\rho_a$ satisfies $0 < \rho_{a}(\mathbb{R})<\infty$. 
    We estimate $\pi_{E}(\varphi)$ by applying \eqref{compactified inequality} to each element of the first product and \eqref{compactified inequality 2} to each element of the second product. This yields
    \begin{equation*}
        \pi_{E}(\varphi) \leq  \prod_{x \in \Lambda' \setminus \mathcal{C}}
        \exp \left( \alpha_1 A_{x}^{n} \sum_{y \in V} |J_{x, y}| f(J_{x, y}) \right)
        \prod_{x \in \mathcal{C}} \exp \left( \frac{a|\varphi_x|^{n}}{2 M_f} \sum_{y \in V} |J_{x, y}| f(J_{x, y})\right)
        \pi_{E}(\tilde{\varphi}).
    \end{equation*}
    Using \eqref{C2} the above inequality simplifies to
    \begin{align*}
        \pi_{E}(\varphi)
        &\leq \left( \prod_{x \in \Lambda'} e^{\alpha_1 M_f A_{x}^{n}} \right)
        \left(\prod_{x \in \mathcal{C}} e^{\frac{a}{2}|\varphi_x|^{n}}\right) \pi_{E}(\tilde{\varphi}).
    \end{align*}
    Combining this with the product over vertices of the terms coming from the single-site measure, and using that $|t_x| \leq \alpha_0$ for any $x \in \Lambda' \cup V'$, we get
    \begin{align}\label{eq:pi P inequality}
        \pi_{E}(\varphi) P_{\Lambda}(\varphi)
        &\leq \left( \prod_{x \in \Lambda'} e^{\alpha_1 M_f A_{x}^{n}} \right)
        \left( \prod_{x \in \Lambda'\cup V'} e^{-\frac{a}{2}|\varphi_x|^{n}}  \right)
        \pi_{E}(\tilde{\varphi}) P_{\Lambda \setminus (\Lambda' \cup V')}(\tilde{\varphi})\\
        &\leq \exp \left(a \alpha_{0}^{n} |\Lambda'\cup V'| \right) \left( \prod_{x \in \Lambda'} e^{\alpha_1 M_f A_{x}^{n}} \right) \left( \prod_{x \in \Lambda'\cup V'} e^{-\frac{a}{2}|\varphi_x|^{n}}  \right)
         \pi_{E}(\tilde{\varphi}) P_{\Lambda}(\tilde{\varphi}) \notag.
    \end{align}
    We now integrate with respect to $\varphi_x$ for each $x \in \Lambda \setminus \Lambda'$ over the event $\{(\mathcal{C}_i)_{i \geq 1} = (V_i)_{i \geq 1}\}$.
    Observe that, on this event, if $y \in V_{i + 1}$ for some $i \in \{0, 1, \ldots\}$, then $|\varphi_y| \geq C f(J_{x, y})$ for some $x \in V_i$ with $J_{x, y} \neq 0$. Hence, ignoring the requirement for spins outside $\mathcal{C}$ to be small, we have
    \begin{equation}\label{eq: integral p bound}
        \int_{\mathbb{R}^{V'}} \mathbbm{1}_{\{(\mathcal{C}_i)_{i \geq 1} = (V_i)_{i \geq 1}\}} \prod_{x \in V'} e^{-\frac{a}{2}|\varphi_x|^{n}} \mathrm{d} \rho_{a}(\varphi_x)
        \leq \prod_{i = 0}^{\infty} \prod_{y \in V_{i+1}} \max_{x \in V_i} p_{x, y}.
    \end{equation}
    Note that for any function $F: \mathbb{R}^{\Lambda' \cup V'} \to \mathbb{R}_{\geq 0}$,
    \begin{align}
        \nonumber
        \rho_{a}([-\alpha_0, \alpha_0])^{|\Lambda' \cup V'|}\min_{t \in [-\alpha_0, \alpha_0]^{\Lambda' \cup V'}} F(t) &\leq \int_{[-\alpha_0, \alpha_0]^{\Lambda' \cup V'}} F(t) \prod_{x \in \Lambda' \cup V'}\mathrm{d} \rho_{a}(t_x)\\
        \label{eq:min}
        &\leq \int_{\mathbb{R}^{\Lambda' \cup V'}}F(t) \prod_{x \in \Lambda' \cup V'}\mathrm{d} \rho_{a}(t_x).
    \end{align}
    Thus, since $t \in [-\alpha_0, \alpha_0]^{\Lambda' \cup V'}$ is arbitrary in the definition of $\tilde\varphi$,  integrating \eqref{eq:pi P inequality} and dividing by $Z_{\Lambda, \beta, \rho, J}^{\xi}$, applying  \eqref{eq:min} for $F$ being $\pi_{E}(\tilde{\varphi}) P_{\Lambda}(\tilde{\varphi})$, and using \eqref{eq: integral p bound} yields
    \eqref{eq:branching process comparison}, with $K = \max \{0, a \alpha_{0}^{n} - \log(\rho_{a}([-\alpha_0, \alpha_0]))\}$.
\end{proof}

We now state and prove the second lemma used in the proof of Theorem~\ref{main theorem} above, which we also state in the case $n=2$. We use the same notation as in the proof of Theorem~\ref{main theorem}, and we recall that the definition of $p_{x,y}$ involves a constant $C$.

\begin{lemma}\label{lem:inside proof 2} 
Let $n\geq 2$ and $K\geq0$. Then there exist $C_0,\alpha_2\geq 1$ that do not depend on $\beta$ such that for every $C\geq C_0$ we have
\[
\sum_{(V_j)} e^{K(|\Lambda'|+|V'|)}\prod_{i = 0}^{\infty} \prod_{y \in V_{i+1}} \max_{x \in V_i} p_{x, y}\leq \alpha_2^{|\Lambda'|}.
\]        
\end{lemma}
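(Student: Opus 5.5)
We bound the sum generation by generation, starting from the last one. The sum runs over sequences $(V_j)_{j\ge1}$ of pairwise disjoint subsets of $\Lambda\setminus\Lambda'$, with $V_0=\Lambda'$; only finitely many $V_j$ are nonempty, since $\Lambda$ is finite. We write $e^{K(|\Lambda'|+|V'|)}=\prod_{i\ge0}e^{K|V_i|}$ and attach the factor $e^{K}$ to each vertex, so that every $y\in V_{i+1}$ carries the weight $e^{K}\max_{x\in V_i}p_{x,y}$. We then bound $\max_{x\in V_i}p_{x,y}\le\sum_{x\in V_i}p_{x,y}$. This turns the weight into a weighted forest: each $y\in V_{i+1}$ chooses a parent $x\in V_i$ and pays $e^{K}p_{x,y}$. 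Dropping the disjointness constraint only enlarges the sum, so we may bound it by a sum over rooted labelled forests with roots in $\Lambda'$. In such a forest each vertex $x$ has a set of children $D_x\subset V$, chosen without any constraint. Each root carries weight $e^{K}$ and each non-root $y$ with parent $x$ carries weight $e^{K}p_{x,y}$. To be careful: a given sequence $(V_j)$ is dominated by the sum over parent assignments, and each such assignment is a forest of this type, so the inequality holds termwise.

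The key quantity is
\[
\mu:=\sup_{x\in V}\sum_{y\in V}e^{K}p_{x,y}.
\]
We show $\mu\le 1/2$ once $C\ge C_0$. By definition, $p_{x,y}=\rho_{\frac a2}(\{|u|\ge Cf(J_{x,y})\})$ when $J_{x,y}\ne0$. We split the neighbours $y$ into those with $|J_{x,y}|\ge\delta_f$ and those with $|J_{x,y}|<\delta_f$.

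For the first group, \eqref{C2} together with $f\ge1$ gives $|J_{x,y}|\ge\delta_f$ for at most $M_f/\delta_f$ vertices $y$. Each such $p_{x,y}$ is at most $\rho_{\frac a2}(|u|\ge C)$, which tends to $0$ as $C\to\infty$.

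For the second group, we use the tail bound $\rho_{\frac a2}(|u|\ge s)\le e^{-\frac a2 s^n}\int e^{a|u|^n}d\rho=:c_\rho e^{-\frac a2 s^n}$ (Chebyshev applied to $\rho_a$). Combined with $f(J)\ge\log(|J|^{-1})^{1/n}$, this gives
\[
p_{x,y}\le c_\rho e^{-\frac a2 C^n\log(1/|J_{x,y}|)}=c_\rho|J_{x,y}|^{\frac a2C^n}.
\]
Once $\frac a2 C^n\ge 1$ and $|J|<\delta_f\le 1$ (shrink $\delta_f$ if needed), this is at most $c_\rho|J_{x,y}|\,\delta_f^{\frac a2C^n-1}$. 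Summing over $y$ and using $\sum_y|J_{x,y}|\le M_f$ from \eqref{C2}, the second group contributes at most $c_\rho M_f\delta_f^{\frac a2C^n-1}\to0$. Hence we can choose $C_0$, depending only on $K,\delta_f,M_f,\rho$ and not on $\beta$, such that $\mu\le1/2$ for all $C\ge C_0$.

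It remains to sum over forests. We do this recursively: for each vertex $x$ let $W(x)$ be the total weight of all trees rooted at $x$, so that $W(x)=e^{K}\sum_{D}\prod_{y\in D}p_{x,y}W(y)/e^{K}$, where the inner $e^{K}$ is absorbed into each child's weight. Since the trees are finite, we first prove the bound for trees of depth at most $N$ by induction on $N$ and then let $N\to\infty$. A bound of the form $W\le w$ then follows from
\[
\sum_D\prod_{y\in D}e^Kp_{x,y}w\le\prod_y(1+e^Kp_{x,y}w)\le e^{\mu w}.
\]
Taking $w=2$ gives $e^{2\mu}\le e\le 2e^{K}\cdot$const, so we set $w=\alpha_2 e^{-K}$ with $\alpha_2\ge e^{K}\cdot 2e$ and check that $e^{K}e^{\mu w}\le w$ holds whenever $\mu w\le1$. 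This closes the induction, and multiplying over the $|\Lambda'|$ roots gives the bound $\alpha_2^{|\Lambda'|}$.

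The main obstacle is the first step: justifying that relaxing $\max$ to a sum and dropping the disjointness and generation constraints is an overcount that still leads to a convergent recursion. This has to be checked carefully, because without disjointness a vertex may be reused. Letting vertices repeat is harmless here, since it only enlarges the sum. Alternatively, one could invoke Theorem~\ref{progeny theorem} for a Galton–Watson process whose offspring distribution is Poisson with mean $\mu$, which dominates the exploration.
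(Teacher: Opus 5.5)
Your route is correct in structure but genuinely different from the paper's. The paper keeps the $\max$ and compares the weight with a probability. It runs an exploration in which each edge $xy$ is open independently with probability $p_{x,y}$, and shows $\mathbb{P}[(W_i)_{i\ge1}=(V_i)_{i\ge1}]\ge b^{|\Lambda'|+|V'|}\prod_i\prod_{y\in V_{i+1}}\max_{x\in V_i}p_{x,y}$ with $b=\inf_x\prod_{y\ne x}(1-p_{x,y})$. This reduces the sum to $\alpha_3^{|\Lambda'|}\sum_k\alpha_3^k\,\mathbb{P}[|W|=k]$. It then dominates $|W|$ by the total progeny of a Galton--Watson process with an explicit offspring law, and bounds the exponential moment via Theorem~\ref{progeny theorem} and a Chernoff estimate. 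You instead replace $\max$ by a sum and expand into weighted forests. This is injective, since $(V_j)$ and the parent map are recovered from the forest. You then relax to trees with repeated labels and close a fixed-point bound on the tree generating function. Both arguments rest on the same input, $\sup_x\sum_y p_{x,y}\to0$ as $C\to\infty$, which you prove exactly as the paper does. Yours avoids the coupling and the hitting-time theorem and gives an explicit $\alpha_2$; the paper's matches its branching-process heuristic.

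Your closing step, however, does not work as written. Let $\tilde W(x)$ be the total weight of trees rooted at $x$, excluding the root's own factor $e^K$. Then $\tilde W(x)=\sum_D\prod_{y\in D}e^Kp_{x,y}\tilde W(y)$, so a bound $\tilde W\le w$ propagates only if $e^{\mu w}\le w$. Since $e^{w/2}>w$ for every $w>0$, your threshold $\mu\le 1/2$ can never close the recursion. Moreover, $\mu\le1/2$ does not give the $\mu w\le1$ you invoke for $w\ge 2e$. Even granting $\mu w\le 1$, your check $e^Ke^{\mu w}\le w$ with $w=\alpha_2e^{-K}$ fails at $\alpha_2=2e^{K+1}$ once $K>\log 2$; under that convention it needs $\alpha_2\ge e^{2K+1}$.

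The repair is free, because your estimates show $\mu\to0$ as $C\to\infty$. For this, take $\delta_f<1$ strictly, so that $\delta_f^{\frac a2C^n-1}\to0$. Choose $C_0$ so that $\mu\le 1/e$. Then by induction on depth, with $\tilde W_0\equiv1$,
\[
\tilde W_{N+1}(x)\le\prod_{y}\bigl(1+e^{K+1}p_{x,y}\bigr)\le\exp(e\mu)\le e .
\]
Hence the forest sum is at most $\prod_{r\in\Lambda'}e^{K}\tilde W(r)\le(e^{K+1})^{|\Lambda'|}$, so $\alpha_2=e^{K+1}$ works. Here $C_0$ depends only on $K,a,n,\delta_f,M_f,\rho$, not on $\beta$.
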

\begin{proof}
We aim to compare $\prod_{i = 0}^{\infty} \prod_{y \in V_{i+1}} \max_{x \in V_i} p_{x, y}$ with the probability that $W_i = V_i$ for all $i \geq 1$, where $(W_{i})_{i \geq 0}$ is the exploration process defined as follows. First set $W_0 = \Lambda'$ and ensure that $C$ is chosen large enough that $\int_{|u| \geq C} \mathrm{d} \rho_{\frac{a}{2}}(u) \leq 1$. Assuming that $W_i$ has already been constructed, for each vertex $x \in W_i$ and $y \in \Lambda \setminus (W_0 \cup \ldots \cup W_i)$, say that the edge $xy$ is open with probability $p_{x, y}$, independently of all other edges. Otherwise, say $xy$ is closed. Then set $W_{i+1}$ to be the set of vertices $y \in \Lambda \setminus (W_0 \cup \ldots \cup W_i)$ such that $xy$ is open for some $x \in W_i$. Once $W_i$ has been constructed for all $i \in \{0, 1, \ldots \}$, set $W = \bigcup_{i=1}^{\infty} W_i$. With this definition, we have
    \begin{align*}
        &\mathbb{P}[W_{i+1} = V_{i+1} | W_1 = V_1, \ldots, W_i = V_i] =\\
        &\prod_{y \in \Lambda \setminus (V_0 \cup \ldots \cup V_{i+1})}
        \mathbb{P} \left[ \bigcap_{x \in V_i} \{ xy \text{ closed} \}\right]
        \prod_{y \in V_{i+1}} \mathbb{P} \left[ \bigcup_{x \in V_i} \{ xy \text{ open} \}\right]
        \geq b^{|V_i|} \prod_{y \in V_{i+1}}  \max_{x \in V_i} p_{x, y},
    \end{align*}
    where
    \begin{equation*}
        b = \inf_{x \in V} \prod_{y \in V \setminus \{x\}} (1-p_{x, y}).
    \end{equation*}
    Combining over all generations of the exploration process, we obtain
    \begin{align}
        \nonumber
        \mathbb{P}[(W_{i})_{i \geq 1} = (V_i)_{i \geq 1}] &=
        \prod_{i = 0}^{\infty} \mathbb{P}[W_{i+1} = V_{i+1} | W_1 = V_1, \ldots, W_i = V_i]\\
        \label{finished exploration}
        & \geq \prod_{i = 0}^{\infty}  b^{|V_i|} \prod_{y \in V_{i+1}}  \max_{x \in V_i} p_{x, y}
        = b^{|\Lambda'| + |V'|}
        \prod_{i = 0}^{\infty} \prod_{y \in V_{i+1}}  \max_{x \in V_i} p_{x, y}.
    \end{align}
        
    We now claim that $\sup_{x \in V} \sum_{y \in V \setminus \{x\}} p_{x, y}$ tends to $0$ as $C$ tends to infinity, which in turn implies that $b$ tends to $1$.
    For any $x, y \in V$ with $J_{x, y} \neq 0$, 
    we have
    \begin{align*}
        p_{x, y} = \int_{|u| \geq C f(J_{x, y})} e^{-\frac{a}{2}|u|^{n}} \, \mathrm{d} \rho_{a}(u) \leq \exp \left( -\frac{a}{2}
        (Cf(J_{x,y}))^{n} \right) \rho_{a}(\mathbb{R}).
    \end{align*}
    Given $x,y \in V$ with $|J_{x, y}| < \delta_f$, we have by \eqref{C2} that $ p_{x, y} \leq \rho_{a}(\mathbb{R}) |J_{x, y}|^{\frac{a}{2}C^{n}}$, and we can choose $C>0$ to be large enough so that $\rho_{a}(\mathbb{R}) |J_{x, y}|^{\frac{a}{2}C^{n}}\leq \tfrac{|J_{x, y}|}{CM_f}$. 
    Since $\sum_{y \in V} |J_{x, y}| \leq M_f$,
    there are at most $M_f/\delta_f$ vertices $y\in V$ such that $|J_{x,y}|\geq \delta_f$ and for these vertices, we can use the bound $
    p_{x, y} \leq e^{-a
        C^{n}/2} \rho_{a}(\mathbb{R})$.
    The claim follows.
    
    Let $\alpha_3 = b^{-1} e^{K} \in [1, \infty)$. It follows from summing \eqref{finished exploration} over $V_1, V_2, \ldots$ that
    \begin{equation}
        \label{final_bound}
        \sum_{(V_j)} e^{K(|\Lambda'|+|V'|)}\prod_{i = 0}^{\infty} \prod_{y \in V_{i+1}} \max_{x \in V_i} p_{x, y}\leq \alpha_{3}^{|\Lambda'|}
        \sum_{k = 0}^{\infty} \alpha_{3}^k \mathbb{P}[|W| = k].
    \end{equation}
    We want to stochastically dominate $(W_i)_{i \geq 0}$ by a branching process $(Z_i)_{i \geq 0}$.
    By choosing the value of $C$ to be large enough, we can ensure that $b \geq 1/2$. We can then define a random variable $X$ by
    \begin{equation*}
        \mathbb{P}[X = k] = 
        \begin{cases}
        b &\text{if } k = 0,\\
        1 - b - \frac{(1-b)^2}{b} &\text{if } k = 1, \\
        (1-b)^k &\text{if } k \in \{2, 3, \ldots\},
        \end{cases}
    \end{equation*}
    and let $(Z_i)_{i \geq 0}$ be a branching process with initial population $|\Lambda'|$ and offspring distribution $X$.
    If $x \in W_i$ for some $i \in  \{0,1,\ldots\}$, let $X_x$ be the number of vertices $y \in W_{i+1}$ such that the edge $xy$ is open.
    For all $k \geq 0$, we have
    $\mathbb{P}[X_x \geq k] \leq \mathbb{P}[X_x \ge 1]^k \leq (1-b)^k \leq \mathbb{P}[X \geq k]$, which proves the desired stochastic domination. In particular, since $\alpha_3 \geq 1$ we have that
    \begin{align*}
        \nonumber
        \sum_{k = 0}^{\infty} \alpha_{3}^k \mathbb{P}[|W| = k] \leq \sum_{k = 0}^{\infty} \alpha_{3}^k \mathbb{P}[T = k + |\Lambda'|],
    \end{align*}
    where $T$ is the total progeny of $(Z_i)_{i \geq 0}$. Applying Theorem \ref{progeny theorem} gives that
    \begin{align}
        \nonumber
        \sum_{k = 0}^{\infty} \alpha_{3}^k \mathbb{P}[|W| = k]
        &\leq \alpha_3^{|\Lambda'|} + \sum_{k = |\Lambda'|}^{\infty} \alpha_{3}^k \frac{|\Lambda'|}{k + |\Lambda'|} \mathbb{P}[X_1 + \ldots + X_{k + |\Lambda'|} = k]\\
        \label{sum of X 1}
        & \leq \alpha_3^{|\Lambda'|} + \frac{1}{2}\sum_{k = |\Lambda'|}^{\infty} \alpha_3^{k} \mathbb{P}[X_1 + \ldots + X_{2k} \geq k],
    \end{align}
    where $X_1, X_2,\ldots$ are independent with the same distribution as $X$.
    We now bound the latter probability as follows. 
    Setting $\theta = 2 \log(2\alpha_3)$, we have that $\mathbb{E}[e^{\theta X}] \rightarrow 1 $ as $b \rightarrow 1$. By increasing the value of $C$, we can make $b$ as close to 1 as desired, so by choosing $C$ large enough, we can ensure that $\mathbb{E}[e^{\theta X}] < e^{\theta/4}$. We then have by the exponential Markov inequality and independence
    \begin{equation}
        \label{sum of X 2}
        \mathbb{P}[X_1 + \ldots + X_{2k} \geq k] \leq e^{-\theta k} \mathbb{E}[e^{\theta X}]^{2k}
        \leq e^{-\theta k/2} = (2\alpha_3)^{-k}.
    \end{equation}
    Combining \eqref{sum of X 1} and \eqref{sum of X 2} yields
    \begin{equation*}
        \sum_{k=0}^{\infty} \alpha_{3}^{k} \mathbb{P}[|W| = k] \leq \alpha_3^{|\Lambda'|} + 2^{-|\Lambda'|},
    \end{equation*}
    and substituting this in \eqref{final_bound} completes the proof.
\end{proof}    

\begin{remark}
   In the case of nearest-neighbour interactions on a graph of bounded degree, the proof of Lemma~\ref{lem:inside proof 2} can be simplified by using the fact that the number of ways to choose $\mathcal{C} \setminus \Lambda'$ so that $|\mathcal{C} \setminus \Lambda'| = k$ is at most exponential in $k+|\Lambda'|$.
\end{remark}

\section{Regularity for related models}
\label{section: related models}
In this section, we aim to generalise Theorem \ref{main theorem}. We first show that our arguments can also be applied when $\rho$ satisfies \eqref{eq: single-site assumption} with $n=2$ for some $a \geq 4 \beta M_f$, with the other assumptions from Section \ref{section:def of model} unchanged.

Recall from Section \ref{section: bc notation} the definition of $A(x, \Lambda)$ in the case $n=2$.
The assumption that $a \geq 4 \beta M_f$ is stronger than is necessary for our arguments or the arguments of \cite{Lebowitz_Presutti} to apply, but in later applications we only consider the case when $\rho$ satisfies \eqref{eq: single-site n=2},
so we include this assumption for simplicity.
The theorem below is the analogue of Theorem \ref{main theorem} in the case $n=2$.

\begin{theorem}
\label{Theorem: n=2}
    Let $a\geq 4 \beta M_f$ and assume $\rho$ satisfies $\int_{\mathbb{R}} e^{a |u|^{2}} \mathrm{d} \rho(u) < \infty$.
    There exist $C \geq 1, \tilde{C} > 0$ depending only on $\beta, \delta_f, M_f, \rho$ and $a$ such that for any $\Lambda \Subset V$, $\Lambda' \subset \Lambda$, $\psi \in \mathbb{R}^{\Lambda'}$, $\lambda\leq \frac{a}{4 \beta M_f}$, and any boundary conditions $\xi \in \mathbb{R}^{V}$ with $\sum_{y \in V} |J_{x, y} \xi_y| < \infty$ for all $x \in \Lambda$,
    \begin{equation*}
        \mathrm{d} \nu_{\Lambda, \beta, \rho, J}^{\xi}[\varphi|_{\Lambda'} = \psi]
        \leq \prod_{x \in \Lambda'} e^{\tilde{C} A(x, \Lambda, \xi, C)^2} \mathrm{d} \nu_{\Lambda', 0, \rho_{\frac{a}{2}}, 0}^{0} [\psi].
    \end{equation*} 
\end{theorem}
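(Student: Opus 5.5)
We follow the proof of Theorem~\ref{main theorem} step by step, with exponents adapted to $n=2$. Fix $\Lambda\subset V$ finite and $\Lambda'\subset\Lambda$, and write $A_x=A(x,\Lambda,\lambda,\xi,C)$. Let $\mathcal{C}$ be the set of $x\in\Lambda$ reached by a walk $x_0,\dots,x_m$ from $\Lambda'$ in $\overline{(\Lambda,J)}$ with
\[
|\varphi_{x_k}|\geq CA_{x_0}\lambda^{k}\prod_{i=1}^{k}f(J_{x_{i-1},x_i}) \quad \text{for all } k\in S_m,
\]
and define the generations $\mathcal{C}_i$ by minimal walk length. Part (iv) of Lemma~\ref{A comparison} gives, as before, that $y\in\Lambda'$ lies in $\mathcal{C}$ if and only if $|\varphi_y|\geq CA_y$. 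The goal is to prove \eqref{compactified inequality} and \eqref{compactified inequality 2} with $n=2$ and $\alpha_1=2\beta C^2$. Once these hold, Lemma~\ref{lem:inside proof} gives \eqref{eq:branching process comparison}, since it was stated for $n\geq2$. Lemma~\ref{lem:inside proof 2}, also valid for $n=2$, then sums over $(V_j)$ for $C\geq C_0$. This yields the theorem with $\tilde C=\log(\alpha_2\rho_{a/2}(\mathbb{R}))+\alpha_1M_f$.

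The key point is the replacement for Lemmas~\ref{edge removal general 1} and \ref{edge removal general 2}. For $n=2$ we cannot gain a factor $C^{-(n-2)}$, and the factor $\lambda$ in the walk condition is what supplies the room instead. Let $y\notin\mathcal{C}$ be a neighbour of $x\in\mathcal{C}$, reached from $x$ along a walk. Then $|\varphi_y|\leq \lambda f(J_{x,y})|\varphi_x|$, because the threshold at the next step is $\lambda f$ times the threshold at $x$, which is at most $|\varphi_x|$. Hence
\[
\beta|\varphi_y|(|\varphi_x|+\alpha_0)\leq 2\beta\lambda f(J_{x,y})|\varphi_x|^2\leq \frac{a f(J_{x,y})}{2M_f}|\varphi_x|^2,
\]
using $|\varphi_x|\geq C\geq\alpha_0$ and $\lambda\leq a/(4\beta M_f)$. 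This is exactly the $n=2$ version of Lemma~\ref{edge removal general 2}. The boundary term $h_{x,\Lambda}$ is handled the same way, using the definition of $A$ with its $\lambda^{m+1}$ factor.

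The main obstacle is the case $x,y\in\mathcal{C}$, where Lemma~\ref{edge removal general 1} at $n=2$ only gives $\beta|\varphi_x||\varphi_y|\leq\tfrac{\beta}{2}(|\varphi_x|^2+|\varphi_y|^2)$, with no small prefactor. Here the term must be bounded by $\tfrac{a}{2M_f}|J_{x,y}|f(J_{x,y})(|\varphi_x|^2+|\varphi_y|^2)$ after multiplying by $|J_{x,y}|$. Since $f\geq1$, this needs $\beta\cdot 2\leq a/(2M_f)$, i.e.\ $a\geq4\beta M_f$. This is precisely the hypothesis, and it explains why it is imposed. The $t_xt_y$ contribution is absorbed using $|t|\leq\alpha_0\leq|\varphi|$; if the constants come out slightly tight, we would shrink the required $\lambda$ bound or use $\alpha_0\leq C$ with $C$ large.

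The remaining cases involve $\Lambda'\setminus\mathcal{C}$, where $|\varphi_x|\leq CA_x$. They give $2\beta C^2A_x^2\lambda f(J_{x,y})$, and this is at most $\alpha_1A_x^2f$ after absorbing $\lambda$. That absorption requires $\lambda\leq1$, which is not guaranteed; otherwise we enlarge $\alpha_1$ to $2\beta C^2\lambda$, which is bounded since $\lambda\leq a/(4\beta M_f)$. Finally, $C$ is chosen large in terms of $\rho,\delta_f,M_f,a$, and $\beta$ for Lemma~\ref{lem:inside proof 2}, and at least $\alpha_0$.
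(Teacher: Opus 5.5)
Your proposal is correct and follows the paper's proof essentially step for step. It uses the same cluster with $\lambda^{k}\prod f$ thresholds, the bound $|\varphi_y|\le\lambda f(J_{x,y})|\varphi_x|$ across the cluster boundary, Lemma~\ref{edge removal general 1} together with $a\ge 4\beta M_f$ for edges inside $\mathcal{C}$, and then Lemmas~\ref{lem:inside proof} and \ref{lem:inside proof 2}. Your enlarged constant $2\beta C^2\lambda$ is at most the paper's choice $\alpha_1=\tfrac{aC^2}{2M_f}$, and your hedge in the case $x,y\in\mathcal{C}$ is unnecessary: at $n=2$, Lemma~\ref{edge removal general 1} gives exactly the factor $2\beta$, which the hypothesis $a\ge 4\beta M_f$ covers.
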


When $\rho$ satisfies \eqref{eq: single-site n=2}, it follows from Theorem \ref{Theorem: n=2} that we have tightness for any boundary conditions that are in $\Xi(\lambda)$ for some $\lambda \geq 1$. For nearest-neighbour interactions on a graph $G$, this includes exponentially growing boundary conditions of the form $|\xi_x| \leq C \lambda^{d_G(o, x)}$ (see \eqref{eq: short range Xi n=2}), so we observe that the threshold for tightness jumps from exponential when $n=2$ to double exponential when $n > 2$.

The proof of Theorem \ref{Theorem: n=2} is essentially the same as that of Theorem \ref{main theorem} but with different definitions of $\mathcal{C}$ and $A$.

\begin{proof}[Proof of Theorem \ref{Theorem: n=2}]
    Fix a finite subset $\Lambda \subset V$ and let $\Lambda' \subset \Lambda$. We write $E$ for $\{xy \in \overline{E}(\Lambda, J): x,y \in \Lambda\}$ and $A_x$ for $A(x, \Lambda, \lambda,  \xi, C, J, f)$, where $C \geq 1$ is a constant to be determined.
    Define $\mathcal{C}$ to be the set of vertices $x \in \Lambda$ such that for some $m \in \{ 0, 1, \ldots\}$ there exists a walk $x_0, x_1, \ldots, x_m$ from $\Lambda'$ to $x$ in $\overline{(\Lambda, J)}$ that satisfies
    \begin{equation}
        \label{cluster membership criteria n=2}
        \forall k \in S_m \quad
        |\varphi_{x_k}| \geq C A_{x_0} \lambda^{k} \prod_{i = 1}^{k} f(J_{x_{i-1}, x_i}),
    \end{equation}
    where $S_0 = \{0\}$ and $S_m = \{1, \ldots, m\}$ for $m \geq 1$.
    For each $i \in \{ 0, 1, \ldots\}$, let $\mathcal{C}_i\subset \mathcal{C}$ denote the set of vertices for which $i$ is the smallest value of $m$ such that there exists a walk satisfying \eqref{cluster membership criteria n=2}.
    Let $\alpha_0 \geq 1$ be such that $\rho_{a}([-\alpha_0, \alpha_0]) > 0$.
    Given $(\mathcal{C}_i)_{i \geq 1} = (V_i)_{i \geq 1}$ with $\bigcup_{i = 1}^{\infty} V_i = V'$, let $t \in [-\alpha_0, \alpha_0]^{\Lambda' \cup V'}$ and define the configuration $\tilde{\varphi} \in \mathbb{R}^{\Lambda}$ as in the proof of Theorem \ref{main theorem}.
    Setting $\alpha_1 = \frac{a  C^{2}}{2 M_f} \geq 2 \beta C^{2} \lambda$, one can show that when $C \geq \alpha_0$
    \begin{align}
    \label{compactified inequality n=2}
    &\beta J_{x,y} \varphi_x \varphi_y \leq\\
    \nonumber
    &\beta J_{x,y} \tilde{\varphi}_x \tilde{\varphi}_y +|J_{x,y}|f(J_{x, y}) \left(\frac{a|\varphi_x|^{2}}{2M_f}\mathbbm{1}_{x\in \mathcal{C}}+\alpha_1 A_x^2 \mathbbm{1}_{x\in \Lambda' \setminus \mathcal{C}}+\frac{a|\varphi_y|^{2}}{2M_f}\mathbbm{1}_{y\in \mathcal{C}}+\alpha_1 A_x^2 \mathbbm{1}_{y\in \Lambda' \setminus \mathcal{C}} \right),
    \end{align}
    and
    \begin{equation}
        \label{compactified inequality 2 (n=2)}
        \beta h_{x,\Lambda} \varphi_x \leq \beta h_{x, \Lambda} \tilde{\varphi}_x +  \left( \frac{a|\varphi_x|^2}{2M_f}\mathbbm{1}_{x\in \mathcal{C}}+\alpha_1 A_x^2 \mathbbm{1}_{x\in \Lambda' \setminus \mathcal{C}} \right) \sum_{y \in V \setminus \Lambda} |J_{x, y}| f(J_{x,y}).
    \end{equation}
    The first inequality \eqref{compactified inequality n=2} can be verified in a similar way to \eqref{compactified inequality}. Indeed, if $|\varphi_x|, |\varphi_y| \geq \alpha_0$, then applying Lemma \ref{edge removal general 1} and using that $a \geq 4 \beta M_f$ gives that
    \begin{align*}
        \beta (|\varphi_x| |\varphi_y| + |t_x||t_y|) 
        \leq \frac{a f(J_{x,y})}{2M_f} (|\varphi_x|^{2} + |\varphi_y|^{2}).
    \end{align*} 
    We also use that if $x\in \mathcal{C}$ and $y\in \Lambda\setminus \mathcal{C}$, then $|\varphi_x| \geq \alpha_0$ and $|\varphi_y| \leq \lambda f(J_{x,y}) |\varphi_x|$, which implies
    \begin{align}
        \label{Edge removal alt n=2 (i)}
        \beta (|\varphi_x| |\varphi_y| + |t_x||\varphi_y|)
        \leq 2 \beta \lambda f(J_{x,y}) |\varphi_x|^{2} 
        \leq \frac{a f(J_{x,y})}{2M_f} |\varphi_x|^{2}.
    \end{align}
    To prove \eqref{compactified inequality 2 (n=2)}, if $x \in \mathcal{C}$ then 
    \begin{align*}
        |h_{x, \Lambda}|\leq \left(\sum_{y \in V \setminus \Lambda} |J_{x, y}| f(J_{x,y}) \right) \lambda |\varphi_x|,
    \end{align*}
    so \eqref{Edge removal alt n=2 (i)} applies.
    If $x \in \Lambda' \setminus \mathcal{C}$ then we bound $|h_{x, \Lambda}|$ using the definition of $A_{x}$ to obtain
    \begin{equation*}
        |h_{x, \Lambda}| \leq C \lambda A_x \left(\sum_{y \in V \setminus \Lambda} |J_{x, y}| f(J_{x,y}) \right).
    \end{equation*}
    Having obtained \eqref{compactified inequality n=2} and \eqref{compactified inequality 2 (n=2)}, we use Lemmas~\ref{lem:inside proof} and \ref{lem:inside proof 2} as in the proof of Theorem \ref{main theorem} to conclude.
\end{proof}

\begin{remark}
\label{remark:changed single-site}
Theorem~\ref{Theorem: n=2} can be generalised by allowing the single-site measure to depend on the vertex. We will use such a generalisation to construct the infinite-volume plus measure as the limit of systems with a shifted single-site measure at the boundary. 

Suppose the single-site measure at vertex $x$ is given by $d\rho_{x, \Lambda}(u) = e^{-a_{x, \Lambda}|u|^2}d\mu_{x, \Lambda}(u)$ and there exist a bounded subset $T \subset \mathbb{R}$ and constants $a_{\min}, a_{\max}, M_1, M_2 >0$ such that for all $\Lambda \Subset V$ and $x \in \Lambda$,
\begin{enumerate}
\renewcommand{\labelenumi}{\textbf{(A\arabic{enumi})}}
\renewcommand{\theenumi}{A\arabic{enumi}}
\item \label{A1}
$a_{\min} \leq a_{x, \Lambda} \leq a_{\max}$,
\item \label{A2}
$\mu_{x, \Lambda}(T) \geq M_1$,
\item \label{A3}
$\mu_{x, \Lambda}(\mathbb{R}) \leq M_2$.
\end{enumerate}
Then there exist $C \geq 1, \tilde{C} > 0$ such that 
for any $\Lambda \Subset V$, $\Lambda' \subset \Lambda$, $\psi \in \mathbb{R}^{\Lambda'}$, $\lambda\leq \frac{a_{\min}}{4 \beta M_f}$, and any boundary conditions $\xi \in \mathbb{R}^{V}$ with $\sum_{y \in V} |J_{x, y} \xi_y| < \infty$ for all $x \in \Lambda$,
\begin{equation*}
    \mathrm{d} \nu_{\Lambda, \beta, \rho, J}^{\xi}[\varphi|_{\Lambda'} = \psi]
    \leq \left( \prod_{x \in \Lambda'} e^{\tilde{C} A(x, \Lambda, \xi, C)^{2}} \right) \mathrm{d} \nu_{\Lambda', 0, \tilde{\rho}, 0}^{0} [\psi],
\end{equation*}
where $\tilde{\rho}$ is given by $d\tilde{\rho}_{x}(u) = e^{-\frac{1}{2}a_{x, \Lambda}|u|^2}d\mu_{x, \Lambda}(u)$.

Assumption \eqref{A2} is used in \eqref{eq:min} while assumptions \eqref{A1} and \eqref{A3} are used to bound 
\begin{equation}
\label{changed pxy def}
p_{x,y} = \int_{|u| \geq C f(J_{x,y})} e^{-\frac{a_{y, \Lambda}}{2}} \mathrm{d} \mu_{y, \Lambda}(u)
\end{equation}
for $J_{x,y} \neq 0$. Theorem \ref{main theorem} also holds for any single-site measures satisfying \eqref{A1}, \eqref{A2}, \eqref{A3}.
\end{remark} 

Changing the Hamiltonian can also be considered. We will still restrict our attention to pairwise interactions and will assume further that interactions occur only between neighbours on a graph with bounded degree, so there is an upper bound on the number of vertices that any given vertex can interact with. 

Let $G=(V, E)$ be a graph with bounded degree. For $\Lambda \Subset V$ and boundary conditions $\xi \in \mathbb{R}^{V}$, define the measure $\nu_{\Lambda, U, \rho}^{\xi}$ by
\begin{equation*}
        \mathrm{d} \nu_{\Lambda, U, \rho}^{\xi}[\varphi] = \frac{1}{Z_{\Lambda, U, \rho}^{\xi}} \prod_{\substack{xy \in E\\x, y \in \Lambda}} U_{xy}(\varphi_x, \varphi_y) \prod_{\substack{xy \in E\\x \in \Lambda, y \in V \setminus \Lambda}} U_{xy}(\varphi_x, \xi_y) \prod_{x \in \Lambda} \mathrm{d} \rho_{x, \Lambda}(\varphi_x),
\end{equation*}
for $\varphi \in \mathbb{R}^{\Lambda}$, where $Z_{\Lambda, U, \rho}^{\xi}$ is the partition function and for each $xy \in E$, $U_{xy}: \mathbb{R}^{2} \rightarrow \mathbb{R}^{+}$ is a function.

\begin{theorem}
\label{changed theorem n=2}
Let $D \geq 1$ and let $G=(V, E)$ be a graph such that $\mathrm{deg}(x) \leq D$ for all $x \in V$.
Let $\rho_{x, \Lambda}$ be single-site measures satisfying \eqref{eq: single-site n=2} and assumptions \eqref{A1},\eqref{A2},\eqref{A3}.
For each $xy \in E$, let $U_{xy}: \mathbb{R}^{2} \rightarrow \mathbb{R}^{+}$ be a function satisfying the following assumptions for 
some constants $C \geq 1$, $\lambda \geq 1$, and function $F: [1, \infty) \rightarrow [1, \infty)$:
\begin{enumerate}[(i)]
    \item If $|\varphi_x| \geq C$, $t_x\in T$ and $|\varphi_y| \leq \lambda |\varphi_x|$, then
    \begin{equation*}
        U_{xy}(\varphi_x, \varphi_y) \leq U_{xy}(t_x, \varphi_y) \exp\left(\frac{a_{x, \Lambda}}{2D}|\varphi_x|^{2}\right).
    \end{equation*}
    \item If $t_x,t_y\in T$ and $|\varphi_x| \leq C A_x, |\varphi_y| \leq C \lambda A_x$ for some $A_x \geq 1$, then
    \begin{equation*}
        \max\left\{ \frac{U_{xy}(t_x, \varphi_y)}{U_{xy}(t_x, t_y)}, \frac{U_{xy}(\varphi_x, \varphi_y)}{U_{xy}(t_x, t_y)}, \frac{U_{xy}(\varphi_x, \varphi_y)}{U_{xy}(t_x, \varphi_y)} \right\} \leq e^{F(A_x)}.
    \end{equation*}
\end{enumerate}
Then there exist constants $C_1, C_2$ such that for any $\Lambda \subset V$ finite, $\Lambda' \subset \Lambda$, $\psi \in \mathbb{R}^{\Lambda'}$, and any boundary conditions $\xi \in \mathbb{R}^{V}$ satisfying $\sum_{y \in V} |J_{x, y} \xi_y| < \infty$ for all $x \in \Lambda$,
\begin{equation*}
    \mathrm{d} \nu_{\Lambda, U, \rho}^{\xi}[\varphi|_{\Lambda'} = \psi]
    \leq \prod_{x \in \Lambda'} \exp \left(C_1 F(A(x, \Lambda, \lambda, \xi, C_2)) -\frac{1}{2} a_{x, \Lambda} |\psi_x|^{2} \right) \mathrm{d} \mu_{x, \Lambda} (\psi_x).
\end{equation*}
\end{theorem}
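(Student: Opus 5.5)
We follow the exploration argument from the proofs of Theorem~\ref{main theorem} and Theorem~\ref{Theorem: n=2}, working with the nearest-neighbour graph $G$, so that $J_{x,y}=\mathbbm{1}_{xy\in E}$ and $f\equiv 1$ on the edges. Fix $\Lambda'\subset\Lambda$ and write $A_x=A(x,\Lambda,\lambda,\xi,C_2)$, where $C_2\ge C$ is to be chosen. We define the cluster $\mathcal{C}$ as in \eqref{cluster membership criteria n=2}: $x\in\mathcal{C}$ if some walk $x_0,\dots,x_m$ from $\Lambda'$ to $x$ has $|\varphi_{x_k}|\ge C_2A_{x_0}\lambda^k$ for all $k\in S_m$. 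We define the layers $\mathcal{C}_i$ as before. For $t\in T^{\Lambda'\cup V'}$, let $\tilde\varphi$ replace the spins on $\Lambda'\cup V'$ by $t$.

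The first step is the multiplicative analogue of \eqref{compactified inequality}. We compare $\prod U_{xy}(\varphi_x,\varphi_y)$ with $\prod U_{xy}(\tilde\varphi_x,\tilde\varphi_y)$, replacing one endpoint at a time, and split into cases exactly as in the proof of Theorem~\ref{main theorem}.

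\emph{Edge with $x\in\mathcal{C}$ and $y\notin\mathcal{C}\cup\Lambda'$.} Here $|\varphi_y|\le\lambda|\varphi_x|$ because the extended walk fails the criterion. Also $|\varphi_x|\ge C_2\ge C$, so assumption (i) gives a factor $e^{a_{x,\Lambda}|\varphi_x|^2/(2D)}$.

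\emph{Boundary edges $y\in V\setminus\Lambda$.} These are handled the same way. The definition of $A_x$ (with $h$ replaced by the boundary values $\xi_y$, using the $\tilde A$-version for nearest-neighbour bounded degree) ensures $|\xi_y|\le\lambda|\varphi_x|$ whenever $x\in\mathcal{C}$.

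\emph{Edge with both $x,y\in\mathcal{C}$.} This is the delicate case, since there is no analogue of Lemma~\ref{edge removal general 1} for general $U$. We first replace $\varphi_x$ by $t_x$ using (i), which requires $|\varphi_y|\le\lambda|\varphi_x|$. We order the endpoints so that $|\varphi_y|\le|\varphi_x|$, which is possible since $\lambda\ge1$. We then replace $\varphi_y$ by $t_y$ using (i) with roles swapped, which now needs $|t_x|\le\lambda|\varphi_y|$. This holds once $C_2\ge\sup_{u\in T}|u|$.

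\emph{Edges touching $\Lambda'\setminus\mathcal{C}$.} For such $x$ we have $|\varphi_x|\le C_2A_x$. Neighbours $y\notin\mathcal{C}$ satisfy $|\varphi_y|\le C_2\lambda A_x$ by Lemma~\ref{A comparison}(iv) or by failure of the criterion. Assumption (ii) then bounds the ratio by $e^{F(A_x)}$ per edge, at most $D$ edges.

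Each vertex of $\mathcal{C}$ is charged at most $D$ times $e^{a_{x,\Lambda}|\varphi_x|^2/(2D)}$, for a total of $e^{a_{x,\Lambda}|\varphi_x|^2/2}$. Each vertex of $\Lambda'\setminus\mathcal{C}$ is charged at most $e^{DF(A_x)}$. This gives the analogue of \eqref{eq:pi P inequality}: the single-site weights on $\Lambda'\cup V'$ become $e^{-\frac12 a_{x,\Lambda}|\varphi_x|^2}\mathrm{d}\mu_{x,\Lambda}$, and the replaced configuration carries the factor $e^{a_{\max}\sup_T|u|^2}$ per vertex.

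The remaining steps follow Lemma~\ref{lem:inside proof}, using \eqref{A2} in \eqref{eq:min} to integrate out $t$ and recover $Z$. We then apply Lemma~\ref{lem:inside proof 2} with the single-site integrals $p_{x,y}$ as in \eqref{changed pxy def}. Because of bounded degree, $\sup_x\sum_y p_{x,y}\le D M_2e^{-a_{\min}C_2^2/2}$, which is small for large $C_2$ by \eqref{A1} and \eqref{A3}; alternatively one can use the counting remark after that lemma. Summing over $(V_i)$ yields the claim with $C_1=D+\log\alpha_2+K$.

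The main obstacle is the case $x,y\in\mathcal{C}$ together with the mixed $\Lambda'\cap\mathcal{C}$ edges, where (i) must be applied twice in a valid order. We would also try enlarging $C_2$ so that $T\subset[-C_2,C_2]$, keeping every intermediate hypothesis satisfied.
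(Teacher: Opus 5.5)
Your proposal is correct and is essentially the paper's own proof. It uses the same cluster $\mathcal{C}$ with threshold $C_2A_{x_0}\lambda^k$, the same replacement of the spins on $\Lambda'\cup V'$ by $t\in T$ (via (i) at cluster vertices and (ii) at vertices of $\Lambda'\setminus\mathcal{C}$), and the same reduction to Lemmas~\ref{lem:inside proof} and~\ref{lem:inside proof 2}. Your explicit two-step use of (i) on $\mathcal{C}$--$\mathcal{C}$ edges and your vertex-by-vertex control of the boundary values $\xi_y$ fill in details the paper leaves implicit; $h_{y,\Lambda}$ alone does not provide that control for a general $U_{xy}$. One correction to how you describe the mixed edges: an edge $xy$ with $y\in\mathcal{C}$ and $x\in\Lambda'\setminus\mathcal{C}$ is handled by (i) at $y$ followed by (ii) at $x$, not by (i) twice. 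The bound $|\varphi_x|\le\lambda|\varphi_y|$ needed for (i) there comes from Lemma~\ref{A comparison}.

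One caveat, shared with the paper: (ii) is stated for the fixed constant $C$, so with $C_2>C$ it only yields $e^{F(C_2A_x/C)}$. Getting $F(A_x)$ itself requires either taking $C_2=C$ with (i)--(ii) assumed for a sufficiently large $C$ (as in Remark~\ref{rem:conditional measure}), or a growth condition on $F$.
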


\begin{proof}[Proof of Theorem \ref{changed theorem n=2}]
    Let $f(1) = 1$ and write $A_x$ for $A(x, \Lambda, \lambda, \xi, C_2, J, f)$,
    where $C_2 \geq 1$ is a constant to be determined. Define $\mathcal{C}$ to be the set of vertices $x \in \Lambda$ such that for some $m \in \{ 0, 1, \ldots\}$ there exists a walk $x_0, x_1, \ldots, x_m$ from $\Lambda'$ to $x$ in $\overline{(\Lambda, J)}$ that satisfies
    \begin{equation}
        \label{changed cluster membership criteria}
        \forall k \in S_m \quad
        |\varphi_{x_k}| \geq C_2 A_{x_0} \lambda^{k},
    \end{equation}
    where $S_0 = \{0\}$ and $S_m = \{1, \ldots, m\}$ for $m \geq 1$.
    For each $i \in \{ 0, 1, \ldots\}$, let $\mathcal{C}_i\subset \mathcal{C}$ denote the set of vertices for which $i$ is the smallest value of $m$ such that there exists a walk satisfying \eqref{changed cluster membership criteria}.
    Given $(\mathcal{C}_i)_{i \geq 1} = (V_i)_{i \geq 1}$ with $\bigcup_{i = 1}^{\infty} V_i = V'$, let $t \in T^{\Lambda' \cup V'}$ and define the configurations $\varphi', \tilde{\varphi} \in \mathbb{R}^{V}$ by
    \begin{equation*}
        \varphi'_x = \begin{cases}
            \varphi_x & \text{if } x \in \Lambda,\\
            \xi_x & \text{otherwise},
        \end{cases}
        \qquad
        \tilde{\varphi}_x = \begin{cases}
            t_x & \text{if } x \in \Lambda' \cup V',\\
            \varphi'_x & \text{otherwise}.
        \end{cases}
    \end{equation*}
    Assumption (i) in the statement of the theorem is analogous to \eqref{Edge removal alt n=2 (i)}, and (ii) allows us to bound $U_{xy}(\varphi'_x, \varphi'_y)$ in terms of $U_{xy}(\tilde{\varphi}_x, \tilde{\varphi}_y)$ when $x$ or $y$ is in $\Lambda' \setminus \mathcal{C}$. Together with the definitions of $\mathcal{C}$ and $A(o, \Lambda)$, they imply that for any $xy \in E$
    \begin{align*}
        &U_{xy}(\varphi'_x, \varphi'_y) \leq\\
        &U_{xy}(\tilde{\varphi}_x, \tilde{\varphi}_y)
        \exp\left(\frac{a_{x, \Lambda}}{2D}|\varphi_x|^{2} \mathbbm{1}_{x \in \mathcal{C}} + F(A_{x}) \mathbbm{1}_{x \in \Lambda' \setminus \mathcal{C}} + \frac{a_{y, \Lambda}}{2D}|\varphi_y|^{2} \mathbbm{1}_{y \in \mathcal{C}} + F(A_{y}) \mathbbm{1}_{y \in \Lambda' \setminus \mathcal{C}}\right).
    \end{align*}
    Combining over all edges and using that each vertex has degree at most $D$, we get
    \begin{align*}
        \prod_{xy \in E} U_{xy}(\varphi'_x, \varphi'_y)
        \leq \left(\prod_{xy \in E} U_{xy}(\tilde{\varphi}_x , \tilde{\varphi}_y) \right) \prod_{x \in \mathcal{C}} e^{\frac{a_{x, \Lambda}}{2}|\varphi_x|^{2}} \prod_{x \in \Lambda'} e^{D F(A_{x})}.
    \end{align*}
    By combining the above with the terms coming from the single-site measure and integrating, we can show as in the proof of Lemma \ref{lem:inside proof} that for some $K\geq0$,
    \begin{align*}
        &\mathrm{d}\nu_{\Lambda, U, \rho}^{\xi}  \left[
        \varphi|_{\Lambda'}, \, (\mathcal{C}_i)_{i \geq 1} = (V_i)_{i \geq 1} \right] \leq \\
        \nonumber
        &\exp \left(K|\Lambda'\cup V'| \right) \left( \prod_{x \in \Lambda'} e^{DF(A_x)} e^{\frac{-a_{x, \Lambda}}{2}|\varphi_x|^2} \mathrm{d}\mu_{x, \Lambda}(\varphi_x) \right)
        \prod_{i = 0}^{\infty} \prod_{y \in V_{i+1}} \max_{x \in V_i} p_{x, y},
    \end{align*}
    with $p_{x,y}$ defined as in \eqref{changed pxy def}.
    Applying Lemma \ref{lem:inside proof 2} concludes the proof.
\end{proof}

\begin{remark}\label{rem:conditional measure}
    One example where Theorem \ref{changed theorem n=2} is useful is the random cluster representation of the $\varphi^{4}$ model, introduced in \cite{well_behaved}, which is a measure on pairs $(\mathsf{a},\omega)$, where $\mathsf{a}$ is the absolute value field and $\omega$ is a percolation configuration. We may wish to consider the distribution of $\mathsf{a}$ in this model conditional on observing a given percolation configuration $\omega$, similarly to \cite[Lemma 6.8]{well_behaved}. In this case, the functions $U_{xy}$ are given by
    \begin{equation*}
        U_{xy}(\mathsf{a}_x, \mathsf{a}_y) =
        \begin{cases}
            e^{-\beta \mathsf{a}_x \mathsf{a}_y} & \text{if } \omega_{xy} = 0,\\
            e^{\beta \mathsf{a}_x \mathsf{a}_y} - e^{-\beta \mathsf{a}_x \mathsf{a}_y} & \text{if } \omega_{xy} = 1.
        \end{cases}
    \end{equation*}
    We now check that assumptions (i) and (ii) in Theorem \ref{changed theorem n=2} are satisfied and that the choice of $C, \lambda$ and $F$ does not depend on $\omega$.
    Assume $\rho_{x, \Lambda}$ are single-site measures supported on $\mathbb{R}^{+}$ that satisfy the assumptions of Theorem \ref{changed theorem n=2} and let $\lambda = \frac{a_{\min}}{4 D \beta}$.
    Let $C\geq 1$ be a constant to be determined and let $T = [t_{\min}, t_{\max}]$, where $0 <t_{\min} < t_{\max} \leq C$ and $T$ satisfies \eqref{A2}. Let $xy \in E$. If $\omega_{xy} = 0$, then the interaction term $U_{xy}(\mathsf{a}_x, \mathsf{a}_y)$ is of the same form as in Theorem \ref{Theorem: n=2}, so (i) follows from \eqref{Edge removal alt n=2 (i)}. Now suppose $\omega_{xy} = 1$ and observe that since the spins $\mathsf{a}_x$ only take positive values in this model, $U_{xy}$ is increasing in both arguments. 
    Note that if $\mathsf{a}_x \geq C \geq t_x \ge t_{\min}$ and $\mathsf{a}_y \leq \lambda \mathsf{a}_x$,
    then $\frac{U_{xy} (\mathsf{a}_x, \mathsf{a}_y)}{U_{xy}(t_x, \mathsf{a}_y)}$ is an increasing function of $\mathsf{a}_y$. Hence,
    \begin{align*}
        \frac{U_{xy} (\mathsf{a}_x, \mathsf{a}_y)}{U_{xy}(t_x, \mathsf{a}_y)}
        \leq  \frac{U_{xy} (\mathsf{a}_x, \lambda \mathsf{a}_{x})}{U_{xy}(t_x, \lambda \mathsf{a}_x)}
        \leq \frac{\exp(\beta \lambda \mathsf{a}_x^{2})}{U_{xy}(t_{\min}, C \lambda)}.
    \end{align*}
    The choice of $\lambda$ implies the right hand side above is at most $\exp\left(\frac{a_{\min}}{2D}\mathsf{a}_x^{2}\right)$ for all $C$ large enough, so (i) holds.
    For (ii), we use that if $\omega_{xy} = 1$, the maximum in (ii) is at most
    \begin{equation*}
        \frac{U_{xy}(CA_{x}, C\lambda A_{x})}{U_{xy}(t_{\min}, t_{\min})},
    \end{equation*}
    and if $\omega_{xy} = 0$ then it is at most $\exp(2\beta C^2 \lambda A_{x}^{2})$ by \eqref{compactified inequality n=2}.
\end{remark}

\section{Corollaries and applications}
\label{section: corollaries}
In this section, we give some examples of interactions $J$ and boundary conditions $\xi$ for which we can apply our regularity results, and we then apply them to construct infinite-volume measures. 
Recall from Section \ref{section: bc notation} that $\Xi$ is the set of boundary conditions $\xi$ for which $\tilde{A}(x, V, \xi) < \infty$ for all $x \in V$, and that we have tightness for any $\xi \in \Xi$. 
For nearest-neighbour interactions, we will give a full characterisation of $\Xi$ and show for certain choices of $\rho$ that $\xi \in \Xi$ is necessary to obtain tightness in the case of non-negative boundary conditions. We also give examples of boundary conditions that are in $\Xi$ for different forms of long-range interactions. Later we give conditions on $\xi$ that ensure the measures $\nu_{\Lambda, \beta, \rho, J}^{\xi}$ converge to an $a$-regular Gibbs measure as $\Lambda \nearrow V$ and construct the extremal regular Gibbs measures $\nu^{+}$ and $\nu^{-}$.

We begin by defining some notation that will be used throughout this section.
Assume $G = (V, E)$ is an infinite connected graph such that every vertex has finite degree, and fix an origin $o \in V$. Let $d_G: V \times V \rightarrow \mathbb{N}_{0}$ be the graph distance in $G$, and for $S \subset V$ let $d_S$ denote the graph distance in the subgraph of $G$ induced by $S$.
For $x \in V$, let $\mathrm{deg}(x)$ be the degree of $x$ in the graph $G$.
For a subset $R \subset V$, we define the interior boundary $\partial R = \{x \in R : d_G(x, V \setminus R) = 1\}$ and exterior boundary $\partial^{\mathrm{ext}} R = \{x \in V \setminus R : d_G(x, R) = 1\}$. 

\subsection{Results for nearest-neighbour interactions}
\label{section:nearest-neighbour}
We first consider the case of (ferromagnetic) nearest-neighbour interactions, which are defined as follows when $G$ has bounded degree.
\begin{definition}
    \label{def: nearest-neighbour}
    If there exists a constant $D$ such that $\mathrm{deg}(x) \leq D$ for all $x \in V$, then we define nearest-neighbour interactions $J_G$ on $G$ by
    \begin{equation*}
        (J_G)_{x, y} = \begin{cases}
            1 & \text{if } xy \in E,\\
            0 & \text{otherwise}.
        \end{cases}
    \end{equation*}
\end{definition}
To simplify the notation, we choose $f$ such that $f(1) = 1$. Then, in the $n>2$ case, 
\begin{equation*}
    A(x, R) = \max \left\{ 1, \max_{y \in \partial R} \left(\frac{|h_{y,R}|}{C|N_{y, V\setminus R}|}\right)^{(n-1)^{-d_R(x,y) -1}} \right\},
\end{equation*}
where $N_{y, V\setminus R} = \partial^{\mathrm{ext}} R \cap \partial^{\mathrm{ext}} \{y\}$ is the set of neighbours of $y$ that are outside $R$.
We also have
\begin{equation*} 
    \tilde{A}(x, R) = \max \left\{ 1, \max_{z \in R \cup \partial^{\mathrm{ext}} R} \left(\frac{|\xi_z|}{C}\right)^{(n-1)^{-d_{R \cup \{z\}}(x, z)}} \right\}.
\end{equation*}
As a consequence of this and connectedness, it follows from Lemma \ref{A comparison} that
\begin{equation}
    \label{eq: short range Xi}
    \Xi = \{\xi \in \mathbb{R}^{V} : \exists \ A_{\xi} \in (0, \infty) \text{ such that } |\xi_z| \leq A_{\xi}^{(n-1)^{d_G(o, z)}} \, \forall \, z \in V\}.
\end{equation}

In the $n=2$ case, we have
\begin{equation}
    \label{eq: formula for A n=2}
    A(x, R) = \max \left\{ 1, \max_{y \in \partial R} \left(\frac{|h_{y,R}|}{C|N_{y, V\setminus R}|\lambda^{d_R(x,y)+1}}\right) \right\},
\end{equation}
and
\begin{equation*} 
    \tilde{A}(x, R) = \max \left\{ 1, \max_{z \in R \cup \partial^{\mathrm{ext}} R} \left(\frac{|\xi_z|}{C\lambda^{d_{R \cup \{z\}}(x, z)}}\right) \right\},
\end{equation*}

so that
\begin{equation}
    \label{eq: short range Xi n=2}
    \Xi(\lambda) = \{\xi \in \mathbb{R}^{V} : \exists \ C_{\xi} \in (0, \infty) \text{ such that } |\xi_z| \leq C_{\xi}\lambda^{d_G(o, z)} \, \forall \, z \in V\}.
\end{equation}

The remainder of the subsection is devoted to justifying that our regularity results are optimal. More precisely, we aim to show that for the $P(\varphi)$ models, any non-negative boundary conditions for which we have tightness are in $\Xi$. To simplify the calculations, we only consider the case when $P(u) = \tilde{a}|u|^{n}$ here. 
Note that in this case $\rho$ satisfies \eqref{eq: single-site assumption} for any $a < \tilde{a}$.

\begin{proposition}
\label{Anti-tightness prop}
Assume $n>2, \tilde{a} > 0$, $\mathrm{d} \rho(u) = e^{-\tilde{a}|u|^{n}} \mathrm{d} u$ and $G$ has bounded degree. If $\xi \in (\mathbb{R}^{+})^{V} \setminus \Xi$, then the family of measures $(\nu_{\Lambda, \beta, \rho, J_G}^{\xi})_{\Lambda \Subset V}$ is not tight.
\end{proposition}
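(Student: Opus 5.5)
The plan is to exhibit, for each large $K$, a finite $\Lambda \ni o$ with $\nu_{\Lambda,\beta,\rho,J_G}^{\xi}[\varphi_o \geq K] \geq 1/2$. The case $\beta=0$ is excluded implicitly: the measure is then a product measure and is trivially tight. Since $\xi \notin \Xi$, for every $A>1$ there is a vertex $z$ with $\xi_z > A^{(n-1)^{d_G(o,z)}}$.

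\textbf{Reduction to a one-dimensional chain.} Fix such a $z$, set $m=d_G(o,z)$, and take a geodesic $o=y_0,y_1,\dots,y_m=z$. Let $\Lambda=\{y_0,\dots,y_{m-1}\}$. Because the path is a geodesic, there are no edges $y_iy_j$ with $|i-j|\geq 2$, so the induced subgraph is a path.

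Let $\xi'$ equal $\xi_z$ at $z$ and $0$ elsewhere. Since $0 \leq \xi' \leq \xi$ and $\{\varphi_o\ge K\}$ is increasing, Proposition~\ref{b.c monotonicity}(i) gives
\[
\nu^{\xi}_{\Lambda}[\varphi_o\geq K]\geq \nu^{\xi'}_{\Lambda}[\varphi_o\geq K].
\]
Under $\nu^{\xi'}_\Lambda$ the model is a nearest-neighbour chain $y_0,\dots,y_{m-1}$ with single-site density $e^{-\tilde a|u|^n}$. Its only boundary input is the field $\beta\xi_z$ acting on $y_{m-1}$. It is therefore a Markov chain when read from $y_{m-1}$ down to $y_0$.

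\textbf{One-step estimate.} Conditional on $\varphi_{y_{k+1}}=s$, the law of $\varphi_{y_k}$ has density proportional to
\[
e^{-\tilde a|u|^n+\beta s u}\,W_k(u),
\]
where $W_k(u)$ is the partition function of the free chain $y_0,\dots,y_{k-1}$ with boundary value $u$ at $y_k$. For $k=m-1$ we use $s=\xi_z$ directly.

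I would bound $W_k$ on both sides.
\begin{itemize}
\item Lower bound: $W_k(u)/W_k(0)=\langle e^{\beta u\varphi_{y_{k-1}}}\rangle$ with respect to the free chain measure. This is at least $1$ by Jensen and the symmetry of that measure under $\varphi\mapsto-\varphi$.
\item Upper bound: Theorem~\ref{main theorem}, applied with $\xi=0$ (so all $A_x=1$) and $\Lambda'=\{y_{k-1}\}$, gives $\mathrm{d}\nu[\varphi_{y_{k-1}}]\leq e^{\tilde C}\mathrm{d}\rho_{a/2}$. Hence $W_k(u)/W_k(0)\leq C'e^{C'|u|^{n/(n-1)}}$, with $C'$ independent of $k$ and $m$.
\end{itemize}
Since $n/(n-1)<n$, the tilted density $e^{-\tilde a|u|^n+\beta su+O(|u|^{n/(n-1)})}$ concentrates near $u_*=(\beta s/(n\tilde a))^{1/(n-1)}$. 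A Laplace-type comparison of the mass on $\{u<c\,s^{1/(n-1)}\}$ with the mass near $u_*$ should give
\[
\mathbb P\bigl[\varphi_{y_k}<c\,s^{1/(n-1)}\,\big|\,\varphi_{y_{k+1}}=s\bigr]\leq e^{-c''s^{n/(n-1)}}\qquad\text{for } s\geq s_0,
\]
uniformly in $k$, with small constants $c,c''>0$.

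\textbf{Iteration.} Define $s_m=\xi_z$ and $s_k=c\,s_{k+1}^{1/(n-1)}$. By the Markov property, the conditional bound holds for every value $\varphi_{y_{k+1}}\ge s_{k+1}$, because the lower threshold is monotone in $s$. A union bound along the chain then gives
\[
\nu^{\xi'}_\Lambda\bigl[\varphi_{y_k}\geq s_k\ \forall k\bigr]\geq 1-\sum_{k}e^{-c''s_k^{n/(n-1)}}.
\]
Since $\xi_z>A^{(n-1)^m}$, we get $s_0\geq c^{(n-1)/(n-2)}A$, and all $s_k\geq s_0$. Choosing $A$ large makes $s_0\geq K$ and makes the sum at most $1/2$, because the $s_k$ grow doubly exponentially from $s_0$ and the sum is dominated by its first term.

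The main obstacle is the one-step estimate. It needs the upper bound on $W_k(u)/W_k(0)$ to be uniform in the chain length and of lower order than $|u|^n$, which is why Theorem~\ref{main theorem} with zero boundary conditions is invoked there. Everything else is monotonicity and bookkeeping.
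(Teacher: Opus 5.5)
Your argument is correct. It shares the paper's skeleton: a geodesic to a vertex $z$ violating the growth bound, thresholds with $s_k\approx s_{k+1}^{1/(n-1)}$ propagated from $z$ back to $o$, and a one-dimensional Laplace estimate at each step. Where you differ is in how you decouple the rest of the system.

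The paper works in the balls $\Lambda_i=\{x: d_G(o,x)<m_i\}$ and conditions successively on $\{\varphi_{y_{i,k}}\ge D_{i,k}\}$. It then uses Lemma~\ref{J monotonicity Lemma} (Griffiths' inequality for the signs plus monotonicity of the absolute-value field from \cite{well_behaved}) to delete every edge except $y_{i,j}y_{i,j+1}$. Each step thus becomes an exact single-site integral bounded below by $D_{i,j}/(D_{i,j}+1)$, and these bounds are multiplied.

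You instead take $\Lambda$ to be the geodesic itself, so Proposition~\ref{b.c monotonicity}(i) alone removes the other boundary values. You keep the downstream segment through the tilt $W_k(u)/W_k(0)$ and close with a union bound. This avoids the edge-removal lemma, at the cost of a perturbed Laplace estimate.

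Your appeal to Theorem~\ref{main theorem} for the upper bound on the tilt is valid but unnecessary. Since $W_k(u)/W_k(0)=\langle\cosh(\beta u\varphi_{y_{k-1}})\rangle$ (free-chain expectation) is even and nondecreasing in $|u|$:
\begin{itemize}
\item on $\{|u|\le cs^{1/(n-1)}\}$ it is at most its value at $cs^{1/(n-1)}$;
\item on $[u_*/2,u_*]$ it is at least that value, for $c$ small;
\item the mass on $\{u<-cs^{1/(n-1)}\}$ is at most $e^{-2\beta cs^{n/(n-1)}}$ times that of its mirror image.
\end{itemize}

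In exchange, the paper's boxes exhaust $V$, so it shows non-tightness along a sequence $\Lambda_i\nearrow V$. Your paths only contradict tightness over all finite $\Lambda\ni o$. That suffices for the definition of tightness used in the paper, but the stronger form would require Lemma~\ref{J monotonicity Lemma} to reduce a ball to the path.

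Your observation that $\beta>0$ is implicitly required is also correct.
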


When $V = \mathbb{Z}$ we can obtain the same result as Proposition \ref{Anti-tightness prop} for mixed positive and negative boundary conditions.

\begin{proposition}
    \label{Anti-tightness mixed}
    Assume $n>2, \tilde{a} > 0$, $\mathrm{d} \rho(u) = e^{-\tilde{a}|u|^{n}} \mathrm{d} u$ and
    $G = (\mathbb{Z}, \{xy : |x-y|  = 1\})$. If $\xi \in \mathbb{R}^{\mathbb{Z}} \setminus \Xi$, then the family of measures $(\nu_{\Lambda, \beta, \rho, J_G}^{\xi})_{\Lambda \Subset V}$ is not tight.
\end{proposition}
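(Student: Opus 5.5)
The plan is to reduce to a one-dimensional chain estimate and to propagate a large boundary value inward one site at a time, using the explicit single-site density $e^{-\tilde a|u|^n}$. Write $q_z = |\xi_z|^{(n-1)^{-|z|}}$. By \eqref{eq: short range Xi}, $\xi\notin\Xi$ means $\sup_z q_z=\infty$. It suffices to find, for every $M$, an interval $\Lambda=\{w+1,\dots,z-1\}\ni 0$ with $\nu^{\xi}_{\Lambda,\beta,\rho,J_G}[|\varphi_0|\ge M]\ge c>0$.

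\textbf{Choice of the interval.} I would choose $z$ to be a \emph{record}: $q_z>q_y$ for all $y$ with $|y|<|z|$. There are infinitely many records with $q_z\to\infty$. By the global symmetry $\varphi\mapsto-\varphi$ (since $\rho$ is even, flipping $\xi$ flips the measure) and the reflection $x\mapsto -x$, we may assume $z>0$ and $\xi_z>0$. The left endpoint $w$ is then chosen among sites with $|w|\le z$, so that $q_w\le q_z$.

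\textbf{One-step propagation.} For $x\in\Lambda$, conditionally on the neighbours $\varphi_{x\pm1}$, the spin $\varphi_x$ has density proportional to $\exp(-\tilde a|v|^n+\beta v(\varphi_{x-1}+\varphi_{x+1}))$. This density concentrates, with error probability $e^{-c s^{n/(n-1)}}$, around $(\beta s/(n\tilde a))^{1/(n-1)}$, where $s=\varphi_{x-1}+\varphi_{x+1}$. Going from $x=z-1$ down to $0$, the right-hand boundary forces $\varphi_x\gtrsim \kappa\,\xi_z^{(n-1)^{-(z-x)}}$ for an explicit constant $\kappa$. At the origin this gives $\varphi_0\gtrsim \kappa' q_z$, with a summable product of error probabilities, provided the left-hand neighbour does not cancel the field coming from the right.

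\textbf{Controlling the left-hand side.} Control of the left-hand neighbours comes from the upper bound already proved. Theorem~\ref{main theorem}, applied with $\Lambda'$ a single site, together with the formula for $\tilde A$ in the nearest-neighbour case, shows that $|\varphi_x|\lesssim C\,\tilde A(x,\Lambda)$ with high probability. For sites $x\le 0$ the contribution of the left boundary to $\tilde A(x,\Lambda)$ is at most $q_w^{(n-1)^{|w|-d(x,w)}}$-type quantities. Because $q_w\le q_z$, these are of at most the same order as the right-hand forcing. The non-negative case (Proposition~\ref{Anti-tightness prop}) needs no such control: by Proposition~\ref{b.c monotonicity}(i) we may simply lower all other boundary values to $0$.

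\textbf{Main obstacle.} The hard step is the near-tie case in the mixed-sign setting: $\xi_w<0$ with $q_w$ comparable to $q_z$, where the two forcings could cancel at $0$. I would first try to exploit the fact that on $\mathbb{Z}$ the two forcings decay at \emph{double-exponential} rates in opposite directions. The ratio of the right forcing to the left forcing at site $x$ is roughly $(q_z/q_w)^{(n-1)^{\cdot}}$ multiplied by factors $e^{\pm c(n-1)^{\cdot}}$ that vary enormously with $x$. So within $O(1)$ steps of the origin, one side dominates by a huge factor, unless the magnitudes are exactly balanced. To avoid the balanced case, I would exploit the freedom in $w$: if $w$ yields near-cancellation at $0$, replace it by $w-1$ or $w+1$ (the record property still bounds its $q$, or else it is itself a better record on the left and we swap roles). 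Then run the propagation from whichever side dominates. Alternatively, one observes $\varphi_{\pm1}$ instead of $\varphi_0$, which suffices for non-tightness on $\Lambda'=\{-1,0,1\}$. If the cancellation can be ruled out at one of these sites, the resulting lower bound $|\varphi|\gtrsim\kappa' q_z\to\infty$ with probability bounded below contradicts tightness.
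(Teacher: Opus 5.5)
Your proposal stops at the decisive step, so the proof is incomplete. The mixed-sign case, where the two boundaries have comparable strength at the origin, is left as a list of things you ``would try'', and none of them is carried out.
\begin{itemize}
\item Replacing $w$ by $w\pm1$ comes with no argument that it helps.
\item The control via Theorem~\ref{main theorem} cannot settle the comparison at the origin. That theorem bounds $|\varphi_x|$ only up to the non-sharp constants $C,\tilde C$, so an upper bound ``of at most the same order'' as the right-hand forcing cannot rule out cancellation.
\item Your heuristic for when one side dominates is miscomputed. The one-sided forcing levels at site $x$ (the $D_{i,j}$ of Proposition~\ref{Anti-tightness prop}) are of order $q_z^{(n-1)^{x}}$ from the right and $q_w^{(n-1)^{-x}}$ from the left. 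Their ratio is $q_z/q_w$ only at $x=0$. Already at $x=1$ the right side wins by a factor at least $q_z^{\,n-1-1/(n-1)}$, even under exact balance.
\end{itemize}
As written, no lower bound on $|\varphi_0|$ or $|\varphi_{\pm1}|$ is established when the boundary signs differ.

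The missing idea is to use the symmetric interval $\Lambda=\{x:|x|<z\}$ and condition on the sign of $\varphi_0$. Removing $0$ disconnects $\Lambda$, so each half then sees boundary values of a single sign. Write $\nu$ for $\nu^{\xi}_{\Lambda,\beta,\rho,J_G}$ and take $\xi_z\ge0$ with $\xi_z^{(n-1)^{-z}}$ large.
\begin{itemize}
\item Suppose $\xi_{-z}\ge-\xi_z$. The symmetry $(\varphi_x)\mapsto(-\varphi_{-x})$ makes $\varphi_0$ symmetric when $\xi_{-z}=-\xi_z$. Combined with Proposition~\ref{b.c monotonicity}(i), this gives $\nu[\varphi_0\ge0]\ge\tfrac12$.
\item Given $\varphi_0=v\ge0$, the domain Markov property leaves non-negative boundary values $(v,\xi_z)$ on $\{1,\dots,z-1\}$. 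Lower $v$ to $0$ by monotonicity and run the proof of Proposition~\ref{Anti-tightness prop} along $1,\dots,z$. This gives $\nu[\varphi_1\ge D\mid\varphi_0\ge0]\ge\varepsilon$ with $D\to\infty$.
\item If instead $\xi_{-z}<-\xi_z$, then $|\xi_{-z}|>\xi_z$ and $\nu[\varphi_0\le0]\ge\tfrac12$. The same argument for $-\varphi$ on $\{-(z-1),\dots,-1\}$ makes $|\varphi_{-1}|$ large.
\end{itemize}
This is the paper's proof. It needs no records, no choice of $w$, no use of Theorem~\ref{main theorem}, and no cancellation analysis.

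Your own scheme could alternatively be rescued by observing $\varphi_1$ on the stronger side. Theorem~\ref{main theorem} gives $|\varphi_0|\lesssim K\max\{q_z,q_{-z}\}$, which is negligible against the input of order $q_z^{(n-1)^2}$ from $\varphi_2$. You would still have to write the inward propagation with neighbours of uncontrolled sign directly from the one-site conditional densities, since Lemma~\ref{J monotonicity Lemma} requires non-negative boundary conditions.
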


In the proofs of the above propositions, we will use the following monotonicity property.

\begin{lemma}
\label{J monotonicity Lemma}
Assume $\rho$ is an even measure satisfying \eqref{eq: single-site assumption}.
Suppose $J, J'$ are interactions on $V$ satisfying  \eqref{C1}, \eqref{C2} and $0 \leq J_{x, y} \leq J'_{x,y}$ for all $x, y \in V$. Suppose also that  $\xi, \xi'$ are boundary conditions on $\Lambda$ such that $0 \leq \xi_x \leq \xi'_x$ for all $x \in V$ and $\sum_{y \in V} J'_{x, y} |\xi'_y| < \infty$ for all $x \in \Lambda$. Let $u\geq 0$ and $x \in \Lambda$.
Then 
\begin{equation*}
    \nu_{\Lambda, \beta, \rho, J}^{\xi}[\varphi_x \geq u] \leq \nu_{\Lambda, \beta, \rho, J'}^{\xi'}[\varphi_x \geq u].
\end{equation*}
\end{lemma}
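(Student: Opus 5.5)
The plan is to reduce everything to the absolute value field, where Proposition~\ref{b.c monotonicity}(ii) and the absolute value FKG inequality are available, and then to handle the increase of $J$ by interpolation.

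\textbf{Step 1 (symmetrisation).} Since $\rho$ is even and $u\geq 0$, the event $\{\varphi_x\geq u\}$ is not increasing in $|\varphi|$. So first I relate it to the event $\{|\varphi_x|\geq u\}$. Because $J\geq0$ and $\xi\geq 0$, the external field $h_{x,\Lambda}=\sum_{y\notin\Lambda}J_{x,y}\xi_y$ is non-negative. The measure $\nu^{\xi}_{\Lambda,\beta,\rho,J}$ is therefore the free-like measure tilted by $\exp(\beta\sum_x h_{x,\Lambda}\varphi_x)$.

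Conditionally on $|\varphi|=\mathsf a$, the signs form an Ising model on $\Lambda$ with couplings $\beta J_{x,y}\mathsf a_x\mathsf a_y\geq0$ and fields $\beta h_{x,\Lambda}\mathsf a_x\geq 0$. Writing $\sigma_x=\mathrm{sgn}(\varphi_x)$, we get
\[
\nu^{\xi}_{\Lambda,\beta,\rho,J}[\varphi_x\geq u]=\int \mathbbm 1_{\{\mathsf a_x\geq u\}}\,\mathbb P^{\mathrm{Is}}_{\mathsf a,J,\xi}[\sigma_x=+1]\,\mathrm d\mathsf P_{J,\xi}(\mathsf a),
\]
where $\mathsf P_{J,\xi}$ is the law of the absolute value field.

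\textbf{Step 2 (monotonicity of the Ising factor).} By Griffiths' second inequality, $\mathbb P^{\mathrm{Is}}[\sigma_x=+1]=\tfrac12(1+\langle\sigma_x\rangle)$ is increasing in each coupling and each field. Couplings and fields are increasing in $\mathsf a$, in $J$ and in $\xi$, since all these quantities are non-negative. Hence $g_{J,\xi}(\mathsf a):=\mathbbm 1_{\{\mathsf a_x\geq u\}}\mathbb P^{\mathrm{Is}}_{\mathsf a,J,\xi}[\sigma_x=+1]$ is an increasing function of $\mathsf a$, and pointwise $g_{J,\xi}\leq g_{J',\xi'}$.

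\textbf{Step 3 (monotonicity of the absolute value field in $J$ and $\xi$).} It remains to show $\mathsf P_{J,\xi}\preceq\mathsf P_{J',\xi'}$ as laws of $|\varphi|$. Given that, we obtain
\[
\mathsf P_{J,\xi}[g_{J,\xi}]\leq \mathsf P_{J',\xi'}[g_{J,\xi}]\leq\mathsf P_{J',\xi'}[g_{J',\xi'}].
\]

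For the $\xi$ part, I invoke Proposition~\ref{b.c monotonicity}(ii) directly.

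For the $J$ part, I would interpolate $J^{s}=J+s(J'-J)$ for $s\in[0,1]$ and differentiate. For an increasing $F(\mathsf a)$,
\[
\frac{\mathrm d}{\mathrm ds}\langle F\rangle=\beta\sum_{y,z}(J'_{y,z}-J_{y,z})\bigl(\langle F\varphi_y\varphi_z\rangle-\langle F\rangle\langle\varphi_y\varphi_z\rangle\bigr),
\]
with an analogous term for the boundary field $\sum_{y\notin\Lambda}(J'-J)_{x,y}\xi_y\varphi_x$. So I need $\langle F(|\varphi|)\,\varphi_y\varphi_z\rangle\geq\langle F\rangle\langle\varphi_y\varphi_z\rangle$, and the corresponding statement with $\varphi_x$ in place of $\varphi_y\varphi_z$.

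This is exactly the kind of statement proved by the absolute value FKG inequality of \cite{LammersOtt2021}. One integrates out the signs first, so that $\langle\varphi_y\varphi_z\mid\mathsf a\rangle=\mathsf a_y\mathsf a_z\langle\sigma_y\sigma_z\rangle^{\mathrm{Is}}_{\mathsf a}$ is increasing in $\mathsf a$ by Griffiths. Then FKG for $\mathsf a$ gives the covariance inequality between the two increasing functions $F$ and $\mathsf a\mapsto\langle\varphi_y\varphi_z\mid\mathsf a\rangle$. This mirrors the small modification of \cite[Lemma 2.13]{random_tangled} cited in Proposition~\ref{b.c monotonicity}. Differentiability in $s$ is justified by \eqref{eq: single-site assumption} with $n>2$, which gives all exponential moments.

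\textbf{Main obstacle.} Step 3 is the hardest. I expect the real difficulty to be checking that the absolute value FKG inequality of \cite{LammersOtt2021} applies with a nonzero non-negative field, that is, that the marginal of $\mathsf a$ remains FKG once the signs have been summed out. If that route stalls, the fallback is to treat the boundary field by adding ghost vertices outside $\Lambda$ whose spins are fixed at $\xi_y\geq 0$. The field term then becomes a ferromagnetic coupling, and only the zero-field version of the absolute value FKG inequality is needed.
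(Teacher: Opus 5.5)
Your proof is correct and follows the paper's route: the signs given $|\varphi|$ form a ferromagnetic Ising model handled by Griffiths' inequality, and the rest is stochastic monotonicity of $|\varphi|$ in $(J,\xi)$. The paper treats $\xi$ via Proposition~\ref{b.c monotonicity}(i) and simply cites \cite[Proposition 4.10]{well_behaved} for the $J$-monotonicity of $|\varphi|$ that you derive by interpolation, and your worry about a non-zero field is no obstacle, since the density of $|\varphi|$ with respect to $\rho_+^{\otimes\Lambda}$ is proportional to the partition function of the sign Ising model, whose logarithm is supermodular in $\mathsf a$ by Griffiths' first and second inequalities, fields included. One caveat, shared with the paper: the identity $\{\varphi_x\ge u\}=\{|\varphi_x|\ge u,\ \sigma_x=+1\}$ requires $u>0$ or $\rho(\{0\})=0$, and the statement can genuinely fail otherwise (for $\rho=\tfrac12\delta_0+\tfrac14(\delta_1+\delta_{-1})$ on $\Lambda=\{x,z\}$ with $\xi=0$, switching on $J'_{x,z}=K$ changes $\nu[\varphi_x\ge 0]$ from $\tfrac34$ to $\frac{5+\cosh(\beta K)}{6+2\cosh(\beta K)}<\tfrac34$ when $\beta K>0$), though this is harmless for Proposition~\ref{Anti-tightness prop}, where $\rho$ has a Lebesgue density and $u>0$.
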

\begin{proof}
    Monotonicity in $\xi$ has already been established in Proposition \ref{b.c monotonicity}, so we just need to prove that $\nu_{\Lambda, \beta, \rho, J}^{\xi}[\varphi_x \geq u] \leq \nu_{\Lambda, \beta, \rho, J'}^{\xi}[\varphi_x \geq u]$. 
    Writing $\sigma_x$ for the sign of $\varphi_x$ and using that $\mathbbm{1}_{\{\sigma_x = 1\}} = \frac{1}{2}(1 + \sigma_x)$, we have
    \begin{align*}
        \nu_{\Lambda, \beta, \rho, J}^{\xi}[\varphi_x \geq u] &= \nu_{\Lambda, \beta, \rho, J}^{\xi}[|\varphi_x| \geq u] \nu_{\Lambda, \beta, \rho, J}^{\xi}[\sigma_x = 1 \mid |\varphi_x| \geq u]\\
        &= \frac{1}{2} \nu_{\Lambda, \beta, \rho, J}^{\xi}[|\varphi_x| \geq u] (1 +\langle \sigma_x \mid |\varphi_x| \geq u \rangle_{\Lambda, \beta, \rho, J}^{\xi}).
    \end{align*}
    Conditional on the absolute value field, $\sigma$ is distributed according to an Ising model with coupling constants determined by the absolute value field. As the boundary conditions are positive, monotonicity of the Ising model in $J$ follows by differentiating and using Griffiths' inequality \cite[Theorem 3.20]{friedli_velenik_2017}, and monotonicity of the absolute value field was proved in \cite[Proposition 4.10]{well_behaved}.
\end{proof}

We are now ready to proceed with the proofs of Propositions \ref{Anti-tightness prop} and \ref{Anti-tightness mixed}.

\begin{proof}[Proof of Proposition \ref{Anti-tightness prop}]
    Using the characterisation \eqref{eq: short range Xi} of $\Xi$, $\xi \in (\mathbb{R}^{+})^{V} \setminus \Xi$ implies that there exists a sequence of vertices $(z_i)_{i \geq 1}$ such that $\xi_{z_i}^{(n-1)^{-m_i}} \rightarrow \infty$ as $i \rightarrow \infty$, where $m_i = d_G(o, z_i)$. Since there are only finitely many vertices at any fixed distance from $o$, by passing to a subsequence, we may assume that $1 \leq m_i < m_j$ for any $i < j$. Given $i \geq 1$, let $\Lambda_i = \{ x \in V : d_G(o, x) < m_i\}$ and let $y_{i,0}, y_{i,1}, \ldots, y_{i,m_i}$ be a walk from $o$ to $z_i$ in $\overline{(\Lambda_i, J)}$.
    Also let $\alpha = \frac{\beta}{\tilde{a}n2^{n-1}}$, and for $j \in \{0, \ldots, m_i\}$, define
    \begin{equation*}
        D_{i, j} = \alpha^{\frac{1-(n-1)^{j-m_i}}{n-2}} \xi_{z_i}^{(n-1)^{j - m_i}}.
    \end{equation*}
    Then $D_{i, m_i} = \xi_{z_i}$ and $D_{i, j+1} = \alpha^{-1} D_{i, j}^{n-1}$. 
    We will show that there exists $\varepsilon > 0$ such that for all $i$ sufficiently large, $
        \nu_{\Lambda_i, \beta, \rho, J_G}^{\xi} [\varphi_o \geq D_{i, 0}] \geq \varepsilon$.
    Since $D_{i, 0} \geq \min \{1, \alpha^{\frac{1}{n-2}}\} \xi_{z_i}^{(n-1)^{-m_i}} \rightarrow \infty$ as $i \rightarrow \infty$, this implies that the sequence is not tight. The strategy for the proof is to condition in turn on the events $\{ \varphi_{y_{i,j}} \geq D_{i, j} \}$. We can then use Lemma \ref{J monotonicity Lemma} to set $J_{x, y} = 0$ everywhere except for the edge between $y_{i,j}$ and $y_{i,j+1}$, meaning that we only have to calculate a one-dimensional integral in each step. We have
    \begin{align*}
        \nu_{\Lambda_i, \beta, \rho, J_G}^{\xi}[\varphi_{o} \geq D_{i,0}]
        &\geq \nu_{\Lambda_i, \beta, \rho, J_G}^{\xi} [\varphi_{y_{i,j}} \geq D_{i,j} \; \forall j \in \{0, \ldots, m_i - 1\}]\\
        &= \prod_{j = 0}^{m_i - 1} \nu_{\Lambda_i, \beta, \rho, J_G}^{\xi} [\varphi_{y_{i,j}} \geq D_{i, j} \mid \varphi_{y_{i,k}} \geq D_{i,k} \; \forall k \in \{j+1, \ldots, m_i - 1\}].
    \end{align*}
    Define $J^{(i,j)}$ by $(J^{(i,j)})_{x, y} = 1$ if $\{x, y\} = \{y_{i,j}, y_{i, j+1}\}$ and $(J^{(i,j)})_{x, y} = 0$ otherwise. Also let $\xi^{(i, j)}$ be defined by $\xi^{(i,j)}_{y_{i, j+1}} = D_{i, j+1}$ and $\xi^{(i,j)}_{x} = 0$ for all $x \in V \setminus \{y_{i, j+1}\}$.
    Using the domain Markov property and Lemma \ref{J monotonicity Lemma}, we have
    \begin{equation*}
        \nu_{\Lambda_i, \beta, \rho, J_G}^{\xi} [\varphi_{y_{i,j}} \geq D_{i, j} \mid \varphi_{y_{i,k}} \geq D_{i,k} \; \forall k \in \{j+1, \ldots, m_i - 1\}] \geq \nu_{\{y_{i, j}\}, \beta, \rho, J^{(i,j)}}^{\xi^{(i,j)}} [\varphi_{y_{i,j}} \geq D_{i, j}].
    \end{equation*}
    We now estimate the probability on the right hand side.
    Let $r = \beta D_{i, j+1} \varphi_{y_{i,j}} - \tilde{a}|\varphi_{y_{i,j}}|^{n}$. Then
    \begin{equation*}
        \frac{\mathrm{d} r}{\mathrm{d} \varphi_{y_{i,j}}} \bigg|_{\varphi_{y_{i,j}} = D_{i, j}} = \beta D_{i, j+1} - \tilde{a}n D_{i, j}^{n-1} = \beta D_{i, j+1} \left(1 - \frac{1}{2^{n-1}} \right),
    \end{equation*}
    where the second equality is from our choice of $\alpha$.
    This is greater than 1 for all $j$ if $i$ is large enough. Then because $r$ is a concave function of $\varphi_{y_{i,j}}$, $\frac{\mathrm{d} r}{\mathrm{d} \varphi_{y_{i,j}}} \geq 1$ whenever $\varphi_{y_{i,j}} \leq D_{i, j}$. Hence,
    \begin{align}
        \label{denominator integral}
        \int_{- \infty}^{D_{i, j}} e^{\beta D_{i, j+1} \varphi_{y_{i,j}} - \tilde{a}|\varphi_{y_{i,j}}|^{n}} \mathrm{d} \varphi_{y_{i,j}} \leq \int_{- \infty}^{\beta D_{i, j} D_{i, j+1} - \tilde{a} (D_{i, j})^{n}} e^{r} \mathrm{d} r
        = \exp(\beta D_{i, j} D_{i, j+1} - \tilde{a} (D_{i, j})^{n}).
    \end{align}
    Note that the maximum value of $r$ occurs when $\varphi_{y_{i,j}} = (\frac{\beta}{\tilde{a}n} D_{i, j+1})^{1/(n-1)} = 2 D_{i, j}$, and $r$ is increasing when $\varphi_{y_{i, j}} < 2 D_{i, j}$. Consequently, when $\varphi_{y_{i,j}} \in [D_{i, j}, 2 D_{i,j}]$, the value of $r$ is at least $\beta D_{i, j} D_{i, j+1} - \tilde{a} (D_{i, j})^{n}$, which implies
    \begin{align}
        \label{numerator integral}
        \int_{D_{i, j}}^{\infty} e^{\beta D_{i, j+1} \varphi_{y_{i,j}} -\tilde{a}|\varphi_{y_{i,j}}|^{n}} \mathrm{d} \varphi_{y_{i,j}} \geq D_{i, j} \exp(\beta D_{i, j} D_{i, j+1} - \tilde{a} (D_{i, j})^{n}).
    \end{align}
    Combining \eqref{denominator integral} and \eqref{numerator integral} we obtain that
    \begin{equation*}
        \nu_{\{y_{i,j}\}, \beta, \rho, J^{(i,j)}}^{\xi^{(i,j)}} [\varphi_{y_{i,j}} \geq D_{i, j}] \geq \frac{D_{i, j}}{D_{i, j} + 1}.
    \end{equation*}
    To conclude, we need to take the product over $j$ and verify that this is bounded below for all $i$ sufficiently large by a positive constant that does not depend on $i$. We have
    \begin{align*}
         \nu_{\Lambda_i, \beta, \rho, J_G}^{\xi}[\varphi_{o} \geq D_{i,0}] \geq \prod_{j = 0}^{m_i - 1} \frac{D_{i, j}}{D_{i, j} + 1}
         = \exp \left( \sum_{j = 0}^{m_i - 1} \log(D_{i, j}) - \log(D_{i, j} + 1) \right).
    \end{align*}
    Taylor expanding $\log(D_{i, j} + 1)$ around $D_{i, j}$, we get $\log(D_{i, j} + 1) \leq \log(D_{i, j}) + \frac{1}{D_{i, j}}$, so
    \begin{align*}
        \nu_{\Lambda_i, \beta, \rho, J_G}^{\xi}[\varphi_{o} \geq D_{i,0}] \geq \exp \left(- \sum_{j=0}^{m_i -1} \frac{1}{D_{i,j}} \right)
        \geq \exp \left( -\sum_{j=0}^{\infty} \frac{1}{\min\{ 1, \alpha^{\frac{1}{n-2}}\} (\xi_{z_i}^{(n-1)^{-m_i}})^{(n-1)^{j}}} \right).
    \end{align*}
    Since $\xi_{z_i}^{(n-1)^{-m_i}} \rightarrow \infty$ as $i \rightarrow \infty$,
    The last sum above converges for all $i$ sufficiently large and decreases to $0$ as $i \rightarrow \infty$, from which the desired result follows.
\end{proof}

\begin{proof}[Proof of Proposition \ref{Anti-tightness mixed}]
    If $\xi \notin \Xi$, then using \eqref{eq: short range Xi} there exists a sequence of vertices $(z_i)_{i \geq 1}$ such that $|\xi_{z_i}|^{(n-1)^{-|z_i|}} \rightarrow \infty$ as $i \rightarrow \infty$. We will proceed with the proof in the case where for infinitely many $i$, $z_i$ and $\xi_{z_i}$ are positive (the other cases are similar). By taking an appropriate subsequence, we can assume that $\xi_{z_i} \geq 0$ and $1 \leq z_i < z_j$ for all $ 1 \leq i < j$.
    
    Define $\Lambda_i = \{x \in \mathbb{Z} : |x| < z_i \}.$ 
    We first consider the case when there exists a subsequence $(z_{i_{k}})_{k \geq 1}$ such that $\xi_{-z_{i_k}} < -\xi_{z_{i_k}}$ for all $k \geq 1$, which implies that $\nu_{\Lambda_{i_k}, \beta, \rho, J_G}^{\xi}[\varphi_0 \leq 0] \geq \frac{1}{2}$. To see why this is true, note that if $\xi_{-z_{i_k}} = -\xi_{z_{i_k}}$, then $\varphi_0$ and $-\varphi_0$ have the same distribution, so the probability that $\varphi_0$ is negative is $1/2$. Now reducing $\xi_{-z_{i_k}}$ increases the probability of the event  $\{\varphi_0\leq 0\}$ by Proposition \ref{b.c monotonicity}. 
    
    Conditionally on $\{\varphi_0 \leq 0\}$, the subgraph $\{-1, -2, \ldots, -(z_{i_k} -1)\}$ has non-positive boundary conditions. Hence $-\varphi$ is distributed according to a measure with non-negative boundary conditions, and we can apply Proposition \ref{Anti-tightness prop} to deduce that there exists $\varepsilon > 0$ such that $\nu_{\Lambda_{i_k}, \beta, \rho, J_G}^{\xi}[\varphi_{-1} \leq -D_k|\varphi_0 \leq 0] \geq \varepsilon$
    for all $k$ sufficiently large, where $D_k \rightarrow \infty$ as $k \rightarrow \infty$. It follows that
    $ \nu_{\Lambda_{i_k}, \beta, \rho, J_G}^{\xi}[|\varphi_{-1}| \geq D_k] \geq \frac{\varepsilon}{2}$,
    so the sequence of measures $(\nu_{\Lambda_{i_k}, \beta, \rho, J_G}^{\xi})_{k \geq 1}$ is not tight.
    
    If no such subsequence exists, then for all $i$ large enough we have that $\xi_{-z_i} \geq - \xi_{z_i}$, so $\nu_{\Lambda_i, \beta, \rho, J_G}^{\xi}[\varphi_0 \geq 0] \geq \frac{1}{2}.$ We can now conclude the proof similarly using that if $\varphi_0 \geq 0$ then we have non-negative boundary conditions on the subgraph $\{1, 2, \ldots, z_{i_k} -1\}$.
\end{proof}

\subsection{Results for more general interactions}
\label{Section: long-range}

In this subsection, we determine which boundary conditions are in $\Xi$ in the case of long-range interactions that satisfy some additional assumptions.

\begin{definition}
    \label{Def:reasonable interactions}
    We say that interactions $(J_{x, y})_{x, y \in V}$ are reasonable if they satisfy the following assumptions in addition to \eqref{C1} and \eqref{C2}:
    \begin{itemize}
        \item
        $V$ is $J$-connected.
        \item 
        There exists $r> 0$ such that for all $x\neq y \in V, \, |J_{x, y}| \leq  d_G(x, y)^{-r}$.
        \item
        $f$ is an even function and is decreasing on $(0, \infty)$.
        \item
        There exists $c \in (0, 1)$ such that $f(2^{r} t) \geq c f(t)$ for all $t \in [0, \infty)$.
    \end{itemize}
\end{definition}

Note that this includes the nearest-neighbour interactions $J_G$ as we are assuming $G$ is connected.
The next proposition gives a sufficient condition to have $\xi \in \Xi$ when $J$ is reasonable. Below $c$ and $r$ are the constants of Definition \ref{Def:reasonable interactions}.
\begin{proposition}
    \label{long range bc example}
    Suppose that $J$ is reasonable and $\xi \in \mathbb{R}^{V}$ is such that there exists $M_{\xi} \in \mathbb{R}$ with $|\xi_x| \leq M_{\xi} f(d_G(o, x)^{-r})$ for all $x \in V \setminus \{o\}$. Then $\xi \in \Xi(\lambda)$ for any $\lambda \geq \frac{1}{c}$.
\end{proposition}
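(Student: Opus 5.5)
By Lemma~\ref{A comparison}(iii) and the assumption that $V$ is $J$-connected, it suffices to show that $\tilde{A}(o, V, \lambda, \xi, C) < \infty$. Indeed, for any $x \in V$ there is a walk $o = x_0, \ldots, x_k = x$ in $\overline{(V,J)}$, and then $\tilde{A}(x, V) \leq \tilde{A}(o, V)\lambda^k \prod_{i=1}^k f(J_{x_{i-1},x_i}) < \infty$. So the plan is to exhibit an explicit admissible value, namely
\[
A := \max\{1, M_\xi/C, |\xi_o|/C\},
\]
and check that for every walk $o = x_0, x_1, \ldots, x_m = z$ in $\overline{(V,J)}$ we have $|\xi_z| \leq C A \lambda^m \prod_{i=1}^m f(J_{x_{i-1},x_i})$.

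\textbf{Case $z = o$.} The bound is immediate, since $f \geq 1$ and $\lambda \geq 1$ give a right-hand side of at least $CA \geq |\xi_o|$.

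\textbf{Case $z \neq o$.} Set $d_i = d_G(x_{i-1}, x_i)$ and $D = d_G(o,z) \geq 1$. The triangle inequality gives $D \leq \sum_i d_i$. The hypothesis reads $|\xi_z| \leq M_\xi f(D^{-r})$, so it remains to bound $f(D^{-r})$ by $c^{-m} \prod_i f(J_{x_{i-1},x_i})$. This follows from three observations.
\begin{itemize}
\item[(a)] Since $f$ is even and decreasing on $(0,\infty)$, the map $s \mapsto f(s^{-r})$ is nondecreasing on $[1,\infty)$.
\item[(b)] For each step with $d_i \geq 1$, we have $0 < |J_{x_{i-1},x_i}| \leq d_i^{-r}$ (nonzero because it is a step of a walk). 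Hence $f(J_{x_{i-1},x_i}) = f(|J_{x_{i-1},x_i}|) \geq f(d_i^{-r})$. Steps with $d_i = 0$ (loops $x_{i-1} = x_i$) contribute a factor $f \geq 1$ and can be discarded.
\item[(c)] The doubling assumption $f(2^r t) \geq c f(t)$, applied with $t = (2s)^{-r}$, gives $f((2s)^{-r}) \leq c^{-1} f(s^{-r})$. Iterating, $f((2^j s)^{-r}) \leq c^{-j} f(s^{-r})$.
\end{itemize}

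Now let $d_* = \max_i d_i \geq 1$; this exists because $z \neq o$ forces some $d_i \geq 1$. Then $D \leq m d_* \leq 2^{\lceil \log_2 m \rceil} d_*$. By (a) and (c),
\[
f(D^{-r}) \leq c^{-\lceil \log_2 m\rceil} f(d_*^{-r}) \leq c^{-m} f(d_*^{-r}).
\]
By (b) and $f \geq 1$, $f(d_*^{-r}) \leq \prod_{i=1}^m f(J_{x_{i-1},x_i})$. Since $\lambda \geq 1/c$, we get $c^{-m} \leq \lambda^m$, and therefore
\[
|\xi_z| \leq M_\xi \lambda^m \prod_{i=1}^m f(J_{x_{i-1},x_i}) \leq CA\lambda^m \prod_{i=1}^m f(J_{x_{i-1},x_i}).
\]

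The only genuinely delicate step is the comparison between the single distance $D$ and the step lengths $d_i$, i.e.\ converting the doubling property of $f$ into a loss of at most $c^{-m}$. Using the crude bound $D \leq m\,d_*$, combined with the fact that every factor $f \geq 1$, keeps this clean. Some care is also needed with degenerate steps (loops, where $d_i = 0$) and with the vertex $o$ itself, which the hypothesis on $\xi$ does not cover; both are handled above by the choice of $A$ and by discarding loop factors.
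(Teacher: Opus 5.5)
Your proof is correct, and it follows the same overall plan as the paper:
\begin{itemize}
\item reduce to $\tilde{A}(o,V,\lambda)<\infty$ via Lemma~\ref{A comparison}(iii) and $J$-connectedness;
\item bound $f(d_G(o,z)^{-r})$ by $\lambda^m\prod_{i} f(J_{x_{i-1},x_i})$ using $|J_{x,y}|\le d_G(x,y)^{-r}$, monotonicity of $f$ and the doubling property $f(2^rt)\ge cf(t)$;
\item treat $z=o$ by enlarging the constant.
\end{itemize}

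The difference is in how you compare $d_G(o,z)$ with the step lengths. The paper shortens the walk locally. For a walk with distinct vertices, it merges the last two steps using $\max\{d_G(x_{m-2},x_{m-1}),d_G(x_{m-1},x_m)\}\ge\tfrac12 d_G(x_{m-2},x_m)$ and one application of the doubling property, paying one factor $\lambda$ per merge. It iterates this $m-1$ times, then appeals to a loop-erasure remark (repeated vertices only enlarge the left-hand side) to cover general walks.

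You compare directly with the longest step, via $d_G(o,z)\le m\,d_*\le 2^{\lceil\log_2 m\rceil}d_*$, and discard all other factors because $f\ge1$. This handles loops and repeated vertices with no extra argument. It also loses only $c^{-\lceil\log_2 m\rceil}$ instead of $c^{-(m-1)}$. Since $\lceil\log_2 m\rceil\le m-1$, your argument recovers the paper's intermediate bound $\lambda^m\prod_{i=1}^m f(J_{x_{i-1},x_i})\ge\lambda f(d_G(x_0,x_m)^{-r})$ for $x_0\neq x_m$. This includes the spare factor $\lambda$ that is reused in the infinite-volume section, so nothing downstream would be lost by substituting your version.

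The paper's two-step merging inequality is a single local estimate that is easy to state and iterate. However, it gives a cruder constant and needs the walk to be self-avoiding first.
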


Before proving the proposition, we give two examples where it can be applied. Firstly, the mildest function satisfying the assumption \eqref{C2} is the function $f_1$ given by 
\begin{equation*}
    f_1(t) = \begin{cases}
        \log(|t|^{-1})^{1/2} & \text{if } |t| < e^{-1},\\
        1 & \text{otherwise}.
    \end{cases}
\end{equation*}
Proposition \ref{long range bc example} implies that if $|\xi_x|$ grows at most like $\sqrt{\log(d_G(o, x))}$, then $\xi \in \Xi(\lambda)$ for any reasonable $J$ and $\lambda$ large enough.

For the second example, we consider $\mathbb{Z}^{d}$, or any vertex-transitive graph of dimension $d$. Let $J$ be translation invariant interactions satisfying $|J_{x,y}| \leq C_J d_{G}(x, y)^{-d-\varepsilon}$ for all $x \neq y$, where $C_J, \varepsilon > 0$ are constants. This is the setting in which the regularity results of \cite{Lebowitz_Presutti} and \cite{random_tangled} were proved. 
In this case (after making $C_J =1$ by changing the value of $\beta$), for any $\alpha < \frac{\varepsilon}{d+\varepsilon}$ we can apply Proposition \ref{long range bc example} with $r = d + \varepsilon$ to the function $f_\alpha$ given by
\begin{equation*}
    f_\alpha(t) = \begin{cases}
        |t|^{-\alpha} & \text{if } |t| < 1,\\
        1 & \text{otherwise}.
    \end{cases}
\end{equation*}
This implies that any $\xi$ with $|\xi_x| \leq M_{\xi} d_G(x, y)^{\alpha(d+\varepsilon)}$ is in $\Xi(\lambda)$ for all $\lambda \geq 2^\varepsilon$, so we have tightness for boundary conditions growing at most like $d_{G}(o, x)^{\delta}$ for $\delta < \varepsilon$.

We now give the proof of Proposition \ref{long range bc example}.
\begin{proof}[Proof of Proposition \ref{long range bc example}]
    Since $V$ is $J$-connected, we just need to show that $\tilde{A}(o, V, \lambda)$ is finite, as Lemma \ref{A comparison} then implies $\tilde{A}(x, V, \lambda) < \infty$ for all $x \in V$.
    First observe that if $\lambda \geq \frac{1}{c},$ then the assumptions on $f$ imply that for any walk in $\overline{(V, J)}$ consisting of distinct vertices $x_0, x_1, \ldots, x_m$ with $m \geq 2$,
    \begin{align}
        \label{eq:walk shortening}
        \lambda f(d_G(x_{m-2}, x_{m-1})^{-r}) f(d_G(x_{m-1}, x_{m})^{-r})
        &\geq \lambda f\left(\left(\frac{1}{2} d_{G}(x_{m-2}, x_{m})\right)^{-r}\right)\\
        \nonumber
        &\geq f(d_G(x_{m-2}, x_{m})^{-r}).
    \end{align}
    Repeatedly applying \eqref{eq:walk shortening} and using that $f(J_{x_{i-1}, x_i}) \geq f(d_G(x_{i-1}, x_i)^{-r})$ by Definition \ref{Def:reasonable interactions}, we deduce that
    \begin{align}
        \label{one-step walk}
        \lambda^m \prod_{i=1}^{m} f(J_{x_{i-1}, x_i})
        \geq  \lambda f(d_G(x_{0}, x_{m})^{-r}).
    \end{align}
    The above inequality is in fact valid for any walk $x_0, x_1, \ldots, x_m$ with $m \geq 1$ because repeating a vertex in the walk makes the left hand side of \eqref{one-step walk} larger.
    Now consider $z \in V \setminus \{o\}$ with a walk $x_0, x_1, \ldots, x_m$ from $o$ to $z$ in $\overline{(V, J)}$. 
    Applying \eqref{one-step walk} to reduce $x_0, x_1, \ldots , x_{m}$ to a one-step walk $o, z$ yields
    \begin{align*}
        |\xi_z| \leq M_{\xi} f(d_G(o, z)^{-r})
        \leq M_\xi \lambda^{m} \prod_{i=1}^{m} f(J_{x_{i-1}, x_i}),
    \end{align*}
    so if $\mathcal{A} \geq \frac{M_{\xi}}{C}$, then $\mathcal{A}$ satisfies the requirements for $\tilde{A}(o, V, \lambda)$ for any $z \in V \setminus \{o\}$. The case $z = o$ can also be included by increasing the value of $\mathcal{A}$ further if necessary.
\end{proof}

\subsection{Results for infinite-volume measures}
\label{Section: infinite-volume}
In this section, we show how our results can be applied to measures defined on $\mathbb{R}^{V}$.
Recall that $G = (V, E)$ is an infinite connected graph where every vertex has finite degree and with a fixed origin $o \in V$.
For $y \in V$, we let $B_k(y) = \{x \in V : d_{G}(x, y) \leq k\}$.
Throughout this section, we assume that $J, f, \beta$ and $\rho$ are fixed with $\rho$ satisfying \eqref{eq: single-site n=2} and that $V$ is $J$-connected. We will also drop $J$ from the subscripts.
If $\rho$ satisfies the stronger assumption \eqref{eq: single-site assumption} for some $a >0$, $n>2$, then slightly stronger versions of some statements in this section can be obtained by using the machinery of Theorem \ref{main theorem} instead of Theorem \ref{Theorem: n=2}.

Recall from Definition \ref{Def:Gibbs} the definitions of $a$-regular measures and Gibbs measures.
As a corollary of Theorem \ref{Theorem: n=2}, we obtain regularity when $\nu$ is a limit of finite-volume measures with boundary conditions growing slowly enough that $A(x, \Lambda)$ is bounded by a constant for all $x$ sufficiently far from the boundary of $\Lambda$.

\begin{corollary}
    \label{prop: limit of finite n=2}
    Let $(\Lambda_i)_{i \geq 1}$ be a sequence of finite subsets of $V$ such that $\Lambda_i \nearrow V$ as $i \rightarrow \infty$ and assume that the boundary conditions $\xi$ satisfy for some $\lambda \geq 1$
    \begin{equation}
        \label{eq:tempered boundary conditions n=2}
        \exists A_{\max} \in [1, \infty) \text{ such that } \limsup_{\Lambda \nearrow V} A(x, \Lambda, \lambda, \xi) \leq A_{\mathrm{max}} \, \forall x \in V.
    \end{equation}
    If $\nu_{\Lambda_i, \beta, \rho}^{\xi}$ converges weakly to a probability measure $\nu$ as $i \rightarrow \infty$, then $\nu$ is an $a-$regular Gibbs measure for any $a \geq 2 \beta M_f \lambda$. 
\end{corollary}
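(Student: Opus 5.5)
The plan is to obtain $a$-regularity of $\nu$ by passing the uniform finite-volume bound of Theorem~\ref{Theorem: n=2} to the weak limit, and then to verify the DLR equation by combining the domain Markov property with a truncation of the long-range interactions. The truncation is controlled by moment bounds that also follow from Theorem~\ref{Theorem: n=2} via Lemma~\ref{A comparison}.

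\textbf{Regularity.} Since $\rho$ satisfies \eqref{eq: single-site n=2}, I apply Theorem~\ref{Theorem: n=2} with $2a$ in place of $a$. Its hypotheses are $2a\geq 4\beta M_f$ and $\lambda\le 2a/(4\beta M_f)$, and both follow from $a\ge 2\beta M_f\lambda$ together with $\lambda\ge1$. This gives $C,\tilde C$, independent of $\Lambda$, such that for every finite $\Lambda'\subset\Lambda_i$
\[
\mathrm{d}\nu^{\xi}_{\Lambda_i,\beta,\rho}[\varphi|_{\Lambda'}=\psi]\le \prod_{x\in\Lambda'}e^{\tilde C A(x,\Lambda_i,\lambda,\xi,C)^2}\,\mathrm{d}\nu^0_{\Lambda',0,\rho_a,0}[\psi].
\]
By \eqref{eq:tempered boundary conditions n=2}, for each fixed $\Lambda'$ and all $i$ large enough, $A(x,\Lambda_i)\le A_{\max}+1$ for every $x\in\Lambda'$. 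Hence the density is bounded by $e^{B|\Lambda'|}$ times the reference density, with $B=\tilde C(A_{\max}+1)^2$ independent of $\Lambda'$. Applying this to nonnegative bounded continuous $g$ on $\mathbb{R}^{\Lambda'}$ and letting $i\to\infty$ gives $\nu[g]\le e^{B|\Lambda'|}\int g\,\mathrm{d}\nu^0_{\Lambda',0,\rho_a,0}$. Outer regularity of Borel measures then extends this to all Borel sets, which is exactly $a$-regularity.

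\textbf{Uniform moments.} The same bound gives, for $i$ large, $\langle|\varphi_y|^2\rangle^{\xi}_{\Lambda_i}\lesssim A(y,\Lambda_i)^2$, where the implied constant depends only on $\rho,a,\tilde C$. Lemma~\ref{A comparison}(iv), applied to a one-step walk, gives $A(y,\Lambda_i)\le A(x,\Lambda_i)\lambda f(J_{x,y})$. For a fixed $x$ this yields, uniformly in large $i$,
\[
\sum_{y\notin B}|J_{x,y}|\,\langle|\varphi_y|\rangle^{\xi}_{\Lambda_i}\lesssim A_{\max}\lambda\sum_{y\notin B}|J_{x,y}|f(J_{x,y}).
\]
By \eqref{C2} the right-hand side tends to $0$ as the finite set $B$ increases to $V$. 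The part of this sum coming from $y\notin\Lambda_i$ is the field $h_{x,\Lambda_i}$, which the definition of $A$ controls in the same way. The same estimate holds under $\nu$ by Fatou's lemma. In particular $\nu$-a.s. $\sum_y|J_{x,y}\eta_y|<\infty$ for every $x$, so $\langle\cdot\rangle^{\eta}_{\Delta}$ is $\nu$-a.s.\ well defined.

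\textbf{DLR equation.} I fix a finite $\Delta$ and a bounded continuous $g$, which suffices by a monotone class argument. For $\Lambda_i\supset\Delta$, the domain Markov property gives $\nu_i[g]=\int\langle g\rangle^{\eta\cup\xi}_{\Delta}\,\mathrm{d}\nu_i(\eta)$. The difficulty is that $\eta\mapsto\langle g\rangle^{\eta}_{\Delta}$ depends on infinitely many coordinates and is unbounded in its dependence on them. I therefore replace the external field on each $x\in\Delta$ by its truncation $\sum_{y\in B\setminus\Delta}J_{x,y}\eta_y$. The truncated kernel is a bounded function of $\eta|_{B}$, and it is continuous by dominated convergence, using \eqref{eq: single-site n=2} to integrate $e^{\beta h\varphi}$. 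Because $\langle g\rangle$ is a ratio of partition functions, changing the field by $\delta$ changes the kernel by an amount controlled by $\|g\|_\infty\,(e^{c|\delta|}-1)$-type quantities, after using $e^{\beta|\delta||\varphi_x|}$ together with the Gaussian tails of $\rho$. The moment bound above makes this error small in $\nu_i$-probability, uniformly in $i$, and also under $\nu$. The truncated identity passes to the limit by weak convergence, and letting $B\nearrow V$ gives the DLR equation for $\nu$.

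\textbf{Main obstacle.} I expect the main obstacle to be this uniform truncation step. The key input is the combination of Lemma~\ref{A comparison}(iv) with \eqref{C2}, which turns the non-uniform control of $A(y,\Lambda_i)$ in $y$ into a summable tail weighted by $|J_{x,y}|f(J_{x,y})$.
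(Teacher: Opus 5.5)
Your proposal is correct and follows the paper's route: the regularity part is the paper's argument (applying Theorem~\ref{Theorem: n=2} with $2a$ is the same as the paper's choice $a\geq 4\beta M_f\lambda$, which yields $\frac{a}{2}$-regularity), and for the Gibbs property the paper only invokes the domain Markov property. Your truncation argument, with uniform first-moment control coming from Theorem~\ref{Theorem: n=2}, Lemma~\ref{A comparison}(iv) and \eqref{C2}, supplies the justification the paper leaves implicit for long-range interactions, where $\eta\mapsto\langle g\rangle^{\eta}_{\Delta}$ is not a local function of $\eta$. One correction is needed: the perturbation bound in the field is uniform only for the truncated field ranging over compact sets, not globally, so you also need tightness of the truncated field; the same moment bound provides this.
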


\begin{proof}
    Fix $\Lambda' \Subset V$ and $a \geq 4 \beta M_f \lambda$. By Theorem \ref{Theorem: n=2}, we have for any $i$ large enough that $\Lambda' \subset \Lambda_i$,
    \begin{equation*}
        \mathrm{d} \nu_{\Lambda_i, \beta, \rho}^{\xi}[\varphi|_{\Lambda'} = \psi]
        \leq  \left(\prod_{x\in \Lambda'}\exp(\tilde{C} A(x, \Lambda_i, \lambda, \xi)^2)\right) \mathrm{d} \nu_{\Lambda', 0, \rho_{\frac{a}{2}}}^{0} [\psi].
    \end{equation*}
    Taking $i \rightarrow \infty$ and using \eqref{eq:tempered boundary conditions n=2}, we have that $\nu$ is $\frac{a}{2}-$regular with $B = \tilde{C} A^2_{\max}$. The fact that $\nu$ is a Gibbs measure follows from the domain Markov property for the measures $\nu_{\Lambda_i, \beta, \rho}^{\xi}$.
\end{proof}

Let us consider some examples where \eqref{eq:tempered boundary conditions n=2} is satisfied.
Let $x \in V$ and assume that $\Lambda$ is large enough that $d_G(x, z) \geq \frac{1}{2} d_G(o, z)$ for any $z \in V \setminus \Lambda$.
For nearest-neighbour interactions, suppose $\xi \in \Xi(\lambda)$. Then by \eqref{eq: short range Xi n=2}, there exists $C_\xi \in (0, \infty)$ such that for any $z \in V \setminus \Lambda$
\begin{equation*}
    |\xi_z| \leq C_\xi \lambda^{d_G(o, z)} \leq C_\xi \lambda^{2d_G(x, z)},
\end{equation*}
which implies that for any $y \in \partial \Lambda$,
\begin{equation*}
    \frac{|h_{y,\Lambda}|}{C|N_{y, V\setminus \Lambda}|\lambda^{2(d_\Lambda(x,y)+1)}}
    \leq \max_{z \in N_{y, V\setminus \Lambda}} \left\{\frac{|\xi_z|}{C \lambda^{2 d_G(x, z)}}\right\} \leq \frac{C_\xi}{C}.
\end{equation*}
It follows from \eqref{eq: formula for A n=2} that $A(x, \Lambda, \lambda^2, \xi) \leq \max\{1, C_\xi/C\}$, and since this is true for any $\Lambda$ large enough, \eqref{eq:tempered boundary conditions n=2} holds with $\lambda^2$ in place of $\lambda$.

For reasonable interactions, \eqref{eq:tempered boundary conditions n=2} holds with $\lambda \geq \frac{1}{c}$ for any boundary conditions satisfying the assumptions of Proposition \ref{long range bc example}.
Indeed, the choice of $\Lambda$ and the assumptions on $f$ in Definition \ref{Def:reasonable interactions} imply that for any $z \in V \setminus \Lambda$,
\begin{align*}
    f(d_G(x,z)^{-r}) \geq f(2^{r}d_G(o,z)^{-r}) \geq c f(d_G(o,z)^{-r}).
\end{align*}
Hence by the assumptions on $\xi$, we have for any walk $x_0, \ldots, x_m$ from $x$ to $z$
\begin{align}
    \label{eq: reasonable A_max}
    |\xi_z| \leq M_{\xi} f(d_G(o, z)^{-r}) \leq \frac{M_{\xi}}{c}f(d_G(x, z)^{-r}) \leq M_{\xi} \lambda^{m} \prod_{i=1}^{m} f(J_{x_{i-1}, x_i}),
\end{align}
where we have used \eqref{one-step walk} and the fact that $f(J_{x_{i-1}, x_i}) \geq f(d_G(x_{i-1}, x_i)^{-r})$ in the last inequality.
From \eqref{eq: reasonable A_max} we see that $A(x, \Lambda, \lambda, \xi) \leq \frac{M_{\xi}}{C}$.

We now show how we can use regularity to make sense of ``maximal'' boundary conditions, which will allow us to construct the infinite-volume plus measure.
Define $\xi^{+} \in \mathbb{R}^{V}$ by $\xi^{+}_x = \sqrt{\log(|B_{d_G(o, x)}(o)|)}$ and let $\xi^{-} = - \xi^{+}$. Below $r$ and $c$ are as in Definition \ref{Def:reasonable interactions}.

\begin{proposition}
    \label{prop: maximal b.c}
    Assume the interactions $J$ are reasonable and ferromagnetic.
    Suppose also that there exists a constant $c_0 > 0$ such that for any $j,k \geq 1$,
    \begin{equation}\label{eq:f assumption}        
    f(k^{-r}) \geq c_0 \sqrt{\frac{\log(|B_{k+j}(o)|)}{\log(|B_j(o)|)}}.
    \end{equation}
    Then there exist $\frac{2 \beta M_f}{c}-$regular Gibbs measures $\nu^{+}_{\beta, \rho}$, $\nu^{-}_{\beta, \rho}$ such that
    \begin{equation*}
        \lim_{\Lambda \nearrow V}\nu_{\Lambda, \beta, \rho}^{\xi^{+}} =\nu^{+}_{\beta, \rho}, \qquad 
        \lim_{\Lambda \nearrow V}\nu_{\Lambda, \beta, \rho}^{\xi^{-}}=\nu^{-}_{\beta, \rho}.
    \end{equation*}
    Moreover, for any $a>0$, any $a-$regular Gibbs measure $\nu$ satisfies $\nu^{-}_{\beta, \rho} \preceq \nu \preceq \nu^{+}_{\beta, \rho}$.
\end{proposition}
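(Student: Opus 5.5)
We prove this in three steps: regularity of the $\xi^{+}$ measures via Theorem~\ref{Theorem: n=2}, convergence via monotonicity, and maximality via the DLR equation combined with regularity. Throughout, $\lambda = 1/c$, and we treat $\nu^{+}$ only; $\nu^{-}$ follows by the symmetry $\varphi \mapsto -\varphi$ (after replacing $\rho$ by its reflection if $\rho$ is not even, which changes nothing in the argument).

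\textbf{Step 1: uniform regularity.} We show that $\xi^{+}$ satisfies \eqref{eq:tempered boundary conditions n=2} with $\lambda = 1/c$. Fix $x$ and take $\Lambda$ large enough that $d_G(x,z) \geq \tfrac12 d_G(o,z)$ for all $z \notin \Lambda$. Let $k = d_G(x,z)$ and $j = d_G(o,x)$. Then $\xi^{+}_z \leq \sqrt{\log|B_{k+j}(o)|}$, and by \eqref{eq:f assumption},
\[
\xi^{+}_z \leq c_0^{-1} \sqrt{\log|B_j(o)|}\, f(d_G(x,z)^{-r}).
\]
This is a bound of the same form as in Proposition~\ref{long range bc example}, but centred at $x$, with constant $M_x = c_0^{-1}\sqrt{\log|B_{d_G(o,x)}(o)|}$. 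Arguing exactly as in \eqref{eq: reasonable A_max}, using \eqref{one-step walk}, gives $A(x,\Lambda,\lambda,\xi^{+}) \leq \max\{1, M_x/C\}$. This bound is uniform in $\Lambda$.

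The statement of \eqref{eq:tempered boundary conditions n=2} asks for a single $A_{\max}$, and $M_x$ grows slowly with $x$. This is harmless for regularity on a fixed finite $\Lambda'$: Theorem~\ref{Theorem: n=2} yields the bound $e^{\tilde C \max_{x\in\Lambda'} A_x^2 |\Lambda'|}$. However, the definition of $a$-regularity requires a constant $B$ independent of $\Lambda'$. I would obtain this by noting that the worst boundary values are those near $\partial\Lambda$. More precisely, I would use \eqref{eq:f assumption} with $j$ ranging over the distance from $o$ to $\partial\Lambda$, together with Lemma~\ref{A comparison}, to show that $A(x,\Lambda)$ is bounded by an absolute constant once $x$ is deep inside $\Lambda$. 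I expect this to be the fiddliest point. If it fails, I would fall back on the monotone comparison in Step 3, which only needs regularity on fixed finite sets. This gives tightness, and any subsequential limit is a $\frac{2\beta M_f}{c}$-regular Gibbs measure by Corollary~\ref{prop: limit of finite n=2}, taking $a = 4\beta M_f \lambda$ there.

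\textbf{Step 2: convergence.} Fix $\Lambda \subset \Lambda_2$. By the domain Markov property,
\[
\nu^{\xi^{+}}_{\Lambda_2}\big|_{\Lambda} = \int \nu^{\eta\cup\xi^{+}}_{\Lambda}\, \mathrm{d}\nu^{\xi^{+}}_{\Lambda_2}(\eta).
\]
Monotonicity in $\Lambda$ would follow from Proposition~\ref{b.c monotonicity}(i) if $\eta \leq \xi^{+}$ on $\Lambda_2 \setminus \Lambda$, but this fails pointwise. So instead of exact monotonicity I would prove convergence of increasing observables through a sandwich argument. First, regularity from Step 1 shows that under $\nu^{\xi^{+}}_{\Lambda_2}$ the spins near $\partial\Lambda$ exceed $\xi^{+}$ only with small probability; this uses the Gaussian tails of $\rho_{a/2}$ together with $\log|B|$-type union bounds. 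On that event we have domination. The error term from spins exceeding $\xi^{+}$ is handled by running the same argument with slightly enlarged boundary conditions $K\xi^{+}$, which still lie within the scope of Step 1.

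From this, $\langle g\rangle^{\xi^{+}}_{\Lambda}$ is asymptotically monotone for bounded increasing local $g$. Combined with tightness, this identifies a unique limit. Since increasing local functions determine the measure, the full sequence converges.

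\textbf{Step 3: maximality.} Let $\nu$ be $a$-regular Gibbs. By the DLR equation, $\nu[g] = \int \langle g\rangle^{\eta}_{\Lambda}\,\mathrm{d}\nu(\eta)$. By $a$-regularity, a union bound with Gaussian tails gives
\[
\nu\big(|\eta_z| > K\xi^{+}_z \text{ for some } z \in \partial^{\mathrm{ext}}\Lambda\big) \to 0
\]
for $K$ large, since the $\sqrt{\log|B|}$ growth of $\xi^{+}$ beats the volume factor. On the complementary event, Proposition~\ref{b.c monotonicity}(i) gives $\langle g\rangle^{\eta}_\Lambda \leq \langle g\rangle^{K\xi^{+}}_\Lambda$. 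It then remains to show that the limit with boundary conditions $K\xi^{+}$ coincides with $\nu^{+}$. For this I would compare $K\xi^{+}$ with $\xi^{+}$ on a slightly smaller box, using the domain Markov property and Step 1 as in Step 2. Letting $\Lambda \nearrow V$ gives $\nu \preceq \nu^{+}$, and symmetrically $\nu^{-} \preceq \nu$. I expect the main obstacle to be this identification of the limits with boundary conditions $K\xi^{+}$ and $\xi^{+}$, since it is exactly where regularity must replace the missing finite-volume maximal boundary condition.
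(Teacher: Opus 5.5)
Your overall architecture is the paper's. Your $M_x$ computation gives a bound $A(x,\Lambda,\lambda,\xi^{+})\lesssim\sqrt{\log|B_{d_G(o,x)}(o)|}$ uniform in $\Lambda$; it holds for every $x\in\Lambda\setminus\{o\}$, not only for large $\Lambda$, since it uses only $d_G(o,z)\le d_G(o,x)+d_G(x,z)$. You then combine it with a union bound and Gaussian tails, asymptotic monotonicity in $\Lambda$, and DLR plus regularity for maximality.

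The genuine gap is the $\frac{2\beta M_f}{c}$-regularity of $\nu^{+}$. Corollary~\ref{prop: limit of finite n=2} needs \eqref{eq:tempered boundary conditions n=2} with a single $A_{\max}$. With your $x$-dependent $M_x$, Theorem~\ref{Theorem: n=2} only gives a factor $\prod_{x\in\Lambda'}|B_{d_G(o,x)}(o)|^{\tilde C C'}$, which is not of the form $e^{B|\Lambda'|}$. Your fallback is circular, since it invokes the same Corollary whose hypothesis is the missing bound. Your suggested route via Lemma~\ref{A comparison} cannot work either: that lemma only propagates bounds with a factor $\lambda^{k}\prod f(J_{x_{i-1},x_i})$ that grows along the walk.

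The fix is to apply \eqref{eq:f assumption} with $j=1$ rather than $j=d_G(o,x)$. This gives $\xi^{+}_z\le M_\xi f(d_G(o,z)^{-r})$ with $M_\xi=c_0^{-1}\sqrt{\log|B_1(o)|}$ independent of $z$, which is the hypothesis of Proposition~\ref{long range bc example}. For fixed $x$, once $\Lambda$ is large enough that $d_G(x,z)\ge\frac12 d_G(o,z)$ for all $z\notin\Lambda$, the doubling condition $f(2^{r}t)\ge c f(t)$ moves the bound from $o$ to $x$ at the cost of a factor $1/c=\lambda$. Then \eqref{one-step walk} gives $A(x,\Lambda,\lambda,\xi^{+})\le\max\{1,M_\xi/C\}$, exactly as in \eqref{eq: reasonable A_max}. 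Since \eqref{eq:tempered boundary conditions n=2} is a $\limsup$ condition, needing $\Lambda$ large depending on $x$ is harmless. You had the right setup but centred the estimate at $x$ instead of $o$.

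The $K\xi^{+}$ device, and with it what you call the main obstacle, is unnecessary. Because $\rho$ has all Gaussian moments, Markov's inequality for $e^{a'\varphi_x^2}$ with $a'$ as large as you like already beats the regularity factor $|B_{d_G(o,x)}(o)|^{\tilde CC'}$. It also beats the count of sites at each distance, giving $\sum_i |B_i(o)|^{1+\tilde CC'-a'}<\infty$. This works at level $\xi^{+}$ itself. The paper therefore uses one-sided additive errors directly. It has $\nu^{\xi^+}_{\Lambda_k}[H]\le\nu^{\xi^+}_{\Lambda_i}[H]+\nu^{\xi^+}_{\Lambda_k}[F^{\mathsf{c}}_{k,i}]$ (Lemma~\ref{maximal b.c lemma}) and $\nu[H]\le\nu^{\xi^+}_{\Lambda}[H]+\nu[F_\Lambda^{\mathsf{c}}]$, with no $K\xi^{+}$-limit to identify. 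Your detour would still work, with $a'$ depending on $K$.

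Finally, for reasonable long-range $J$ the boundary condition seen by $\Lambda$ involves every $z\in V\setminus\Lambda$ with $J_{x,z}\ne 0$. So the bad events must range over all of $V\setminus\Lambda$ (respectively $\Lambda_2\setminus\Lambda$), not just $\partial^{\mathrm{ext}}\Lambda$ or spins near $\partial\Lambda$. The union bound still converges, so this only requires setting the events up correctly.
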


Proposition \ref{prop: maximal b.c} includes the case of nearest-neighbour interactions, as $f$ can be chosen arbitrarily in this case. Also note that if $G$ is vertex-transitive then $|B_{k+j}(o)|\leq |B_k(o)||B_j(o)|$ and the condition on $f$ simplifies to $f(k^{-r}) \geq c_0 \sqrt{\log(|B_k(o)|)}$.
For the $\varphi^4$ model, as a corollary of the Lee--Yang theorem, $\nu^{+}_{\beta, \rho}$ coincides with the measure defined with an external field $h$ by first taking $\Lambda \nearrow V$ and then taking the limit as $h \searrow 0$ (see \cite[Prop. 2.6]{random_tangled}). See also \cite{Lebowitz_Presutti, Bellissard1982} for constructions of the plus measure when $V = \mathbb{Z}^{d}$.

The main ingredient in the proof of Proposition \ref{prop: maximal b.c} is the following lemma, which allows us to obtain monotonicity in $\Lambda$ for the measures $\nu_{\Lambda, \beta, \rho}^{\xi^{+}}$ up to an error term which tends to 0 as $\Lambda \nearrow V$.
\begin{lemma}
    \label{maximal b.c lemma}
    Consider the event $F_{\Lambda, \Lambda'} = \{|\varphi_x| \leq \xi^{+}_x \ \forall x \in \Lambda \setminus \Lambda'\}$.
    If $J$ and $f$ satisfy the assumptions of Proposition \ref{prop: maximal b.c}, then $\nu_{\Lambda, \beta, \rho}^{\xi^{+}}[F_{\Lambda, \Lambda'}^{\mathsf{c}}] \rightarrow 0$ uniformly in $\Lambda \supset \Lambda'$ as $\Lambda' \nearrow V$.
\end{lemma}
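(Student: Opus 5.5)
The approach is a union bound over $x\in\Lambda\setminus\Lambda'$ of single-site tail estimates coming from Theorem~\ref{Theorem: n=2}, applied with $\Lambda'=\{x\}$. Write
\[
\nu_{\Lambda}^{\xi^+}[F^{\mathsf c}_{\Lambda,\Lambda'}]\le \sum_{x\in\Lambda\setminus\Lambda'}\nu_\Lambda^{\xi^+}[|\varphi_x|>\xi^+_x].
\]
For each such $x$, Theorem~\ref{Theorem: n=2} gives
\[
\nu_\Lambda^{\xi^+}[|\varphi_x|>u]\le e^{\tilde C A(x,\Lambda,\lambda,\xi^+)^2}\int_{|u'|>u}\mathrm d\rho_{a/2}(u').
\]
Since $\rho$ satisfies \eqref{eq: single-site n=2} for every $a$, we may take $a$ as large as we like. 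With $\int e^{a'u^2}\mathrm d\rho<\infty$ for all $a'$, we get $\rho_{a/2}(|u'|>u)\le K_b e^{-b u^2}$ for any $b$. Taking $u=\xi^+_x$ gives the bound $K_b|B_{d(o,x)}(o)|^{-b}$.

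The key step is to show that $A(x,\Lambda,\lambda,\xi^+)$ is bounded by a constant uniformly in $\Lambda\ni x$ and $x$. A boundedness bound alone suffices provided $b$ is chosen large. I would show $A(x,\Lambda)\le\tilde A(x,\Lambda)\le\tilde A(x,V)$ and then bound $\tilde A(x,V)$ relative to $\xi^+_x$. In fact I expect a bound of the form $\tilde A(x,V)^2\le c_1+c_2\log|B_{d(o,x)}(o)|$ rather than a uniform constant, and this is where \eqref{eq:f assumption} enters. Take a walk $x_0=x,\dots,x_m=z$ with $\lambda\ge 1/c$. By \eqref{one-step walk} we have
\[
\lambda^m\prod_i f(J_{x_{i-1},x_i})\ge\lambda f(d(x,z)^{-r}).
\]
We need $\xi^+_z\le C\mathcal A\lambda f(d(x,z)^{-r})$. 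Put $j=d(o,x)$ and $k=d(x,z)$, so $d(o,z)\le j+k$. Assumption \eqref{eq:f assumption} gives
\[
\xi^+_z\le\sqrt{\log|B_{j+k}(o)|}\le c_0^{-1} f(k^{-r})\sqrt{\log|B_j(o)|}.
\]
So $\mathcal A\asymp\sqrt{\log|B_j(o)|}=\xi^+_x$ works, with small $k$ and $j=0$ handled separately by constants. Hence $A(x,\Lambda)^2\le c_3(1+\log|B_{d(o,x)}(o)|)$. This dependence on the distance rather than uniform boundedness is the main obstacle. It is resolved by choosing $b>\tilde C c_3+1$.

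Combining the two bounds gives
\[
\nu_\Lambda^{\xi^+}[|\varphi_x|>\xi^+_x]\le K|B_{d(o,x)}(o)|^{-(b-\tilde Cc_3)}.
\]
Summing over all $x\notin\Lambda'$ with $\Lambda'=B_R(o)$ (general $\Lambda'\nearrow V$ eventually contain these), group by distance $k$: the sphere at distance $k$ has size at most $|B_k(o)|$. The resulting tail sum is at most $\sum_{k>R}|B_k(o)|^{-(b-\tilde Cc_3-1)}$. Since $|B_k(o)|\ge k+1$ on an infinite connected graph, choosing $b-\tilde Cc_3-1>1$ makes this series converge, and its tail tends to $0$ independently of $\Lambda$. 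This gives the uniform convergence. One point to check is that the constants $C,\tilde C$ in Theorem~\ref{Theorem: n=2} do not blow up when $a$ is enlarged. They depend on $a$ only through $\alpha_1=aC^2/(2M_f)$, so I fix $a$ first and then pick $b\le a/2$ via a larger exponential moment. If that coupling fails, I would use Theorem~\ref{main theorem}-type decay, or instead apply the $n=2$ theorem with $\rho$ replaced by $\rho$ tilted by $e^{-\epsilon u^2}$ to decouple $b$ from $\tilde C$.
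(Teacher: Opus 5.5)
Your proposal is correct and follows essentially the same route as the paper: a union bound with Theorem~\ref{Theorem: n=2} applied on singletons, a Gaussian-moment (Markov) tail bound for $\rho_{a/2}$ at level $\xi^{+}_x$, the estimate $A(x,\Lambda,\lambda)^2\le c_3\bigl(1+\log|B_{d_G(o,x)}(o)|\bigr)$ obtained from the one-step walk reduction \eqref{one-step walk} together with \eqref{eq:f assumption}, and a summation over spheres using $|B_k(o)|\ge k+1$. Your closing worry is moot. Once $a$ is fixed, and hence $C,\tilde C$, the tail exponent $b$ can be taken arbitrarily large because $\int e^{(a/2+b)u^2}\,\mathrm{d}\rho(u)<\infty$ for every $b$. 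No constraint like $b\le a/2$ is needed, and such a constraint would in fact be circular, since $\tilde C$ grows linearly in $a$.
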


\begin{proof}
    We will drop $\beta, \rho$ from the notation and just write $\nu_{\Lambda}^{\xi}$ for the finite-volume measure on $\Lambda$ with boundary conditions $\xi$.
    Let $\lambda = \frac{1}{c}$, where $c$ is as in Definition \ref{Def:reasonable interactions}.
    Applying Theorem \ref{Theorem: n=2} with $a = 4 \beta M_f \lambda$, together with a union bound, yields
    \begin{equation*}
        \nu_{\Lambda}^{\xi^{+}}[F_{\Lambda, \Lambda'}^{\mathsf{c}}] \leq \sum_{x \in \Lambda \setminus \Lambda'} \nu_{\Lambda}^{\xi^{+}}[|\varphi_x| > \xi^{+}_x] \leq
        \frac{1}{\rho_{\frac{a}{2}}(\mathbb{R})} \sum_{x \in \Lambda \setminus \Lambda'} \exp(\tilde{C} A(x, \Lambda, \lambda)^2 ) \rho_{\frac{a}{2}}[|\varphi_x| > \xi^{+}_x].
    \end{equation*}
    Let $a' > 0$ be a constant to be determined. Applying Markov's inequality to the random variable $e^{a' \varphi_x^{2}}$, we have
    \begin{equation}
        \label{eq:Markov}
        \nu_{\Lambda}^{\xi^{+}}[F_{\Lambda, \Lambda'}^{\mathsf{c}}] \leq
        \frac{\rho_{\frac{a}{2}}[e^{a' \varphi_x^2}]}{\rho_{\frac{a}{2}}(\mathbb{R})} \sum_{x \in \Lambda \setminus \Lambda'} \exp(\tilde{C} A(x, \Lambda, \lambda)^2 ) |B_{d_{G}(o, x)}(o)|^{-a'}.
    \end{equation}
    We will show that there exists a constant $C' \geq 1$ such that for any finite $\Lambda \subset V$ containing $o$ and any $x \in \Lambda \setminus \{o\}$, 
    \begin{equation}
        \label{eq:bound on A}
        A(x, \Lambda, \lambda) \leq \sqrt{C' \log(|B_{d_G(o, x)}(o)|)}.
    \end{equation}
    Combining \eqref{eq:Markov} and \eqref{eq:bound on A} gives for any $\Lambda'$ containing $o$
    \begin{align*}
        \nu_{\Lambda}^{\xi^{+}}[F_{\Lambda, \Lambda'}^{\mathsf{c}}] \leq
        \frac{\rho_{\frac{a}{2}}[e^{a' \varphi_x^2}]}{\rho_{\frac{a}{2}}(\mathbb{R})} \sum_{x \in \Lambda \setminus \Lambda'} |B_{d_{G}(o, x)}(o)|^{\tilde{C} C' -a'}\leq \frac{\rho_{\frac{a}{2}}[e^{a' \varphi_x^2}]}{\rho_{\frac{a}{2}}(\mathbb{R})}
        \sum_{i = g(\Lambda')}^{\infty} |B_{i}(o)|^{1+ \tilde{C} C' - a'},
    \end{align*}
     where $g(\Lambda') = \min_{x \in V \setminus \Lambda'} d_G(o, x)$.
    By choosing $a'$ appropriately, the last sum converges and decreases to $0$ as $\Lambda' \nearrow V$. 

    It remains to prove \eqref{eq:bound on A}. 
    Let $C' \geq 1$ be a constant to be determined and set
    \[
    \mathcal{A}_x =  \sqrt{C' \log(|B_{d_G(o, x)}(o)|)}.
    \]
    We aim to show that $C'$ can be chosen so that for any $\Lambda \Subset V$, $x \in \Lambda \setminus \{o\}$ and any walk $x_0, \ldots, x_m$ from $x$ to a vertex $z \in V \setminus \Lambda$,
    \begin{align}
        \label{eq:walk requirement n=2}
        C\mathcal{A}_{x} \lambda^{m} \prod_{i = 1}^{m} f(J_{x_{i-1}, x_i}) \geq |\xi^{+}_z|,
    \end{align}
    which implies that $A(x, \Lambda, \lambda) \leq \mathcal{A}_x$.
    As in the proof of Proposition \ref{long range bc example}, we can reduce the walk $x_0, \ldots, x_m$ to a one-step walk $x_0, x_m$ by using that $f(J_{x_{i-1}, x_{i}}) \geq f(d_G(x_{i-1}, x_{i})^{-r})$ and applying the inequality \eqref{one-step walk}. This gives
    \begin{align}\label{eq:walk reduction}
        C\mathcal{A}_{x} \lambda^{m} \prod_{i = 1}^{m} f(J_{x_{i-1}, x_i})
        \geq \frac{C\sqrt{C'}}{c}  \sqrt{\log(|B_{d_G(o, x)}(o)|)} f(d_G(x, z)^{-r}),
    \end{align}
    and \eqref{eq:f assumption} implies that
    \begin{align}
        \label{eq:f ball assumption}
        \frac{1}{c_0} \sqrt{\log(|B_{d_G(o, x)}(o)|)} f(d_G(x, z)^{-r}) \geq 
        \sqrt{\log(|B_{d_{G}(o, x) + d_G(x, z)}(o)|)} \geq |\xi^{+}_z|.
    \end{align}
    Combining \eqref{eq:walk reduction} and \eqref{eq:f ball assumption}, we see that it is possible to choose $C'$ so that \eqref{eq:walk requirement n=2} is satisfied, completing the proof of \eqref{eq:bound on A} and of the lemma.
\end{proof}

We now proceed with the proof of Proposition \ref{prop: maximal b.c}. 

\begin{proof}[Proof of Proposition \ref{prop: maximal b.c}]
    We only prove the statements for $\nu^{+}_{\beta, \rho}$.
    As we have fixed $\beta, \rho$ we will drop them from the notation and just write $\nu_{\Lambda}^{\xi}$ for the finite-volume measure on $\Lambda$ with boundary conditions $\xi$.
    Let $H$ be an increasing event that depends only on spins inside a finite subset $\Lambda' \subset V$
    and let $(\Lambda_i)_{i \geq 1}$ be a sequence of finite subsets of $V$ with $\Lambda_i \nearrow V$ as $i \rightarrow \infty$.
    For $k \geq i$, write $F_{k,i}$ for the event $F_{\Lambda_{k}, \Lambda_i}$ defined in Lemma \ref{maximal b.c lemma}.
    Using the domain Markov property and the fact that if $\varphi \in F_{k,i}$, then Proposition \ref{b.c monotonicity} implies that $\nu_{\Lambda_i}^{\varphi}[H] \leq \nu_{\Lambda_i}^{\xi^{+}}[H]$, we have for any $k \geq i$ such that $\Lambda' \subset\Lambda_i$,
   $\nu_{\Lambda_{k}}^{\xi^{+}}[H] \leq \nu_{\Lambda_i}^{\xi^{+}}[H] + \nu_{\Lambda_{k}}^{\xi^{+}}[F_{k,i}^{\mathsf{c}}]$.
    Sending first $k$ to infinity, and then $i$ to infinity, and using Lemma \ref{maximal b.c lemma}, we get that $\limsup_{k\to\infty}\nu_{\Lambda_{k}}^{\xi^{+}}[H]\leq \liminf_{i\to\infty}\nu_{\Lambda_{i}}^{\xi^{+}}[H]$.
    Hence $\lim_{\Lambda \nearrow V} \nu_{\Lambda}^{\xi+}[H]$ exists for any increasing event $H$ depending only on finitely many spins. As these events generate the $\sigma-$algebra, we obtain convergence of $\nu_{\Lambda}^{\xi+}$ to a measure $\nu^{+}$ as $\Lambda \nearrow V$.
    
    The assumption \eqref{eq:f assumption} implies that the boundary conditions $\xi^{+}$ satisfy the assumptions of Proposition \ref{long range bc example}.
    Hence, \eqref{eq:tempered boundary conditions n=2} is satisfied with $\lambda = \frac{1}{c}$, and we can apply 
    Corollary \ref{prop: limit of finite n=2} to deduce that $\nu^{+}$ is an $a$-regular Gibbs measure for any $a \geq 2 \beta M_f \lambda$.
    
    Now consider any $a>0$ and suppose $\nu$ is an $a-$regular Gibbs measure. Then for any finite $\Lambda \subset V$, the DLR equation gives that
    \begin{align}
        \label{eq: xi + DLR}
        \nu[H] = \nu[F_{\Lambda}] \int_{\varphi \in \mathbb{R}^{V}} \nu_{\Lambda}^{\varphi}[H] \mathrm{d} \nu(\varphi | F_{\Lambda})
        + \nu[F_{\Lambda}^{\mathsf{c}}] \int_{\varphi \in \mathbb{R}^{V}} \nu_{\Lambda}^{\varphi}[H] \mathrm{d} \nu(\varphi | F_{\Lambda}^{\mathsf{c}}),
    \end{align}
    where $F_{\Lambda} = \{\varphi:\varphi_x \leq \xi^{+}_x \, \forall x \in V\setminus \Lambda\}$.
    We can show that $\nu[F_{\Lambda}^{\mathsf{c}}] \rightarrow 0$ as $\Lambda \nearrow V$ in the same way as in the proof of Lemma \ref{maximal b.c lemma}:
    first using a union bound, regularity, and Markov's inequality, we have for any $a' > 0$
    \begin{equation*}
        \nu[F_{\Lambda}^{\mathsf{c}}] \leq \sum_{x \in V \setminus \Lambda} \nu[|\varphi_x| \geq \xi^{+}_x]
        \leq
        \frac{B \rho_{a}[e^{a' \varphi_x^2}]}{\rho_{a}(\mathbb{R})} \sum_{x \in V \setminus \Lambda} |B_{d_{G}(o, x)}(o)|^{-a'}.
    \end{equation*}
    It then follows that $\nu[F_{\Lambda}^{\mathsf{c}}] \rightarrow 0$ as $\Lambda \nearrow V$, provided that $a'$ is chosen large enough.
    Hence taking $\Lambda \nearrow V$ in \eqref{eq: xi + DLR} and using Proposition \ref{b.c monotonicity}, we have $\nu[H] \leq \nu^{+}[H]$.
\end{proof}

\subsection{Weak domination by a product measure}
\label{Section: weak domination}

Theorem \ref{Theorem: n=2} states that the density of the spin models we consider in this paper can be bounded up to some multiplicative constants by the density of a product measure with all Gaussian moments. This leads naturally to the question of whether the spin model itself is stochastically dominated by a product measure with all Gaussian moments. As we will see, regularity leads to a weaker form of domination by a product measure.

Given $a>0$ and $B_x>0$, we define $\zeta_x=\zeta_{x,a,B_x}$ to be a single-site measure that depends on the vertex $x$ (see Remark \ref{remark:changed single-site}), defined by
    $\mathrm{d} \zeta_x(u) = \mathbbm{1}_{\{u \geq B_x\}} \mathrm{d} \rho_{a}(u - B_x)$.
Below, we let
    \[
    B_x = \left( \frac{2}{a} \left(  \tilde{C} A(x, \Lambda, \xi, C)^2 + \log(\rho_{a}(\mathbb{R})) - \log(\rho_{\frac{a}{2}}(\mathbb{R}))\right) \right)^{\frac{1}{2}},
    \]
    with $C, \tilde{C}$ the constants from Theorem \ref{Theorem: n=2}.

For two measures $\mu$ and $\nu$ on $\mathbb{R}^{\Lambda}$, we say that $\mu$ weakly dominates $\nu$ if for every $\Lambda'\subset \Lambda$ and for all $u \in \mathbb{R}^{\Lambda'}$, 
\[
\mu\Big(\varphi_x\geq u_x \, \forall x\in \Lambda' \Big) \geq \nu\Big( \varphi_x\geq u_x \, \forall x\in \Lambda' \,\Big).
\]
In particular, this definition implies stochastic domination for the restrictions of $\mu$ and $\nu$ to singletons, but not necessarily for the whole field.

The following corollary of Theorem \ref{Theorem: n=2} applies for any interactions $J$ satisfying \eqref{C1} and \eqref{C2}.

\begin{corollary}
    \label{domination corollary}
    Let $a \geq 4 \beta M_f$, $\lambda \leq \frac{a}{4 \beta M_f}$, and assume that $\rho([0, \infty)) > 0$.
    For any finite $\Lambda \subset V$ and $\Lambda' \subset \Lambda$, and any boundary conditions $\xi$ such that $\sum_{y \in V} |J_{x, y} \xi_y| < \infty$ for all $x \in \Lambda$,
    $\nu_{\Lambda', 0, \zeta}^{0}$ weakly dominates $\nu_{(\Lambda|\Lambda'), \beta, \rho}^{\xi}$.
\end{corollary}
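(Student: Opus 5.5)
The plan is to reduce weak domination to a one-site tail comparison. The multi-site bound will come from Theorem~\ref{Theorem: n=2} applied to a sub-box, and the threshold $B_x$ is defined precisely so that the one-site comparison closes. Fix $\Lambda''\subset\Lambda'$ and $u\in\mathbb{R}^{\Lambda''}$. Write $\bar\zeta_x=\zeta_x/\zeta_x(\mathbb{R})$; this is well defined because $\zeta_x(\mathbb{R})=\rho_a([0,\infty))>0$. Then $\bar\zeta_x$ is the law of $B_x+X$, where $X\sim\rho_a$ is conditioned on $\{X\ge 0\}$. Since $\nu^0_{\Lambda',0,\zeta}$ is the product of the $\bar\zeta_x$, the left-hand side of the weak-domination inequality equals $\prod_{x\in\Lambda''}\bar\zeta_x([u_x,\infty))$. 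Split $\Lambda''$ into $S=\{x\in\Lambda'': u_x>B_x\}$ and its complement. For $x\notin S$ we have $\bar\zeta_x([u_x,\infty))=1$, because $\bar\zeta_x$ is supported on $[B_x,\infty)$. On the other side, dropping the constraints at $x\notin S$ only increases the event, so
\[
\nu^{\xi}_{(\Lambda|\Lambda'),\beta,\rho}\big(\varphi_x\ge u_x\ \forall x\in\Lambda''\big)\le \nu^{\xi}_{\Lambda,\beta,\rho}\big(\varphi_x\ge u_x\ \forall x\in S\big).
\]
It therefore suffices to prove the inequality with $\Lambda''$ replaced by $S$.

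Next, I apply Theorem~\ref{Theorem: n=2} with $S$ in place of $\Lambda'$; the hypotheses on $a$ and $\lambda$ are exactly those of the corollary. Integrating the density bound over $\{\psi_x\ge u_x\ \forall x\in S\}$ gives
\[
\nu^{\xi}_{\Lambda,\beta,\rho}\big(\varphi_x\ge u_x\ \forall x\in S\big)\le\prod_{x\in S} g_x(u_x),\qquad g_x(u)=e^{\tilde C A(x,\Lambda,\xi,C)^2}\,\frac{\rho_{\frac a2}([u,\infty))}{\rho_{\frac a2}(\mathbb{R})}.
\]
Here I use the convention from the proof of Lemma~\ref{lem:inside proof 2}, namely that $\mathrm d\rho_{\frac a2}(v)=e^{-\frac a2 v^2}\mathrm d\rho_a(v)$ with $\rho_a$ finite. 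It then remains to show that $g_x(u)\le\bar\zeta_x([u,\infty))$ whenever $u>B_x$.

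For the one-site comparison, take $u>B_x\ge 0$. First, $\rho_{\frac a2}([u,\infty))\le e^{-\frac a2u^2}\rho_a([u,\infty))$. Second, since $u-B_x\le u$, we get $\bar\zeta_x([u,\infty))=\rho_a([u-B_x,\infty))/\rho_a([0,\infty))\ge\rho_a([u,\infty))/\rho_a(\mathbb{R})$. Combining these, the desired inequality reduces to
\[
\tilde C A_x^2-\tfrac a2 u^2\le\log\rho_{\frac a2}(\mathbb{R})-\log\rho_a(\mathbb{R}),
\]
which is equivalent to $\tfrac a2u^2\ge\tfrac a2B_x^2$ by the definition of $B_x$. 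This holds because $u>B_x$. Note that $B_x$ is real, since $\rho_a(\mathbb{R})\ge\rho_{\frac a2}(\mathbb{R})$ makes the quantity under the square root nonnegative.

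The only delicate point is bookkeeping rather than analysis. One must get the normalisations of $\rho_a$, $\rho_{\frac a2}$ and $\zeta_x$ right, and one must apply Theorem~\ref{Theorem: n=2} to the subset $S$ rather than to all of $\Lambda'$. The latter is what lets the factors for sites with $u_x\le B_x$ be bounded by $1$ instead of by $e^{\tilde CA_x^2}$.
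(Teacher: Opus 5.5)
Your proposal is correct and is essentially the paper's argument. The paper likewise applies Theorem~\ref{Theorem: n=2} to the set $\{x\in\Lambda' : u_x\ge B_x\}$ and drops the remaining constraints. It then uses the definition of $B_x$ to bound $e^{\tilde C A(x,\Lambda)^2-\frac a2\varphi_x^2}$ by $\rho_{\frac a2}(\mathbb{R})/\rho_a(\mathbb{R})$ for $\varphi_x\ge B_x$, and compares the result with $\rho_a$ shifted by $B_x$ and conditioned on nonnegativity. Your version only makes explicit the normalisation of $\zeta_x$ and the restriction to a sub-collection $\Lambda''\subset\Lambda'$, both of which the paper leaves implicit.
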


\begin{proof}
    We aim to show that for any $u \in \mathbb{R}^{\Lambda'}$,
    \begin{equation*}
        \nu_{\Lambda, \beta, \rho}^{\xi} [\varphi_x \geq u_x, \forall x \in \Lambda'] \leq \nu_{\Lambda', 0, \rho_a}^{0} [\varphi_x + B_x \geq u_x, \forall x \in \Lambda' \mid \varphi_x \geq 0, \forall x \in \Lambda'].
    \end{equation*}
    The probability on the right hand side above is equal to $\nu_{\Lambda, 0, \zeta}^{0} [\varphi_x \geq u_x, \forall x \in \Lambda']$, so this implies the desired weak domination.
    Applying Theorem \ref{Theorem: n=2} with $\Lambda_{u} \coloneq \{x \in \Lambda' : u_x \geq B_x\}$ in place of $\Lambda'$, we obtain
    \begin{align*}
        \nu_{\Lambda, \beta, \rho}^{\xi} [\varphi_x \geq u_x, \forall x \in \Lambda'] &\leq \nu_{\Lambda, \beta, \rho}^{\xi} [\varphi_x \geq u_x, \forall x \in \Lambda_{u}]\\
        &\leq \left(\prod_{x \in \Lambda_u}\exp(\tilde{C} A(x, \Lambda)^2) \right) \nu_{\Lambda_u, 0, \rho_{\frac{a}{2}}}^{0}[\varphi_x \geq u_x,  \forall x \in \Lambda_u]\\
         &\leq \prod_{x \in \Lambda_u} \frac{1}{\rho_{\frac{a}{2}}(\mathbb{R})} \int_{u_x}^{\infty} \exp \left( \tilde{C} A(x, \Lambda)^2 -\frac{a}{2}|\varphi_x|^{2} \right) \mathrm{d} \rho_{a} (\varphi_x).
    \end{align*}
    The choice of $B_x$ gives that $\exp\left(\tilde{C} A(x, \Lambda)^2 -\frac{a}{2}|\varphi_x|^{2} \right) \leq \frac{\rho_{\frac{a}{2}}(\mathbb{R})}{\rho_{a}(\mathbb{R})}$ whenever $\varphi_x \geq B_x$, so since $u_x \geq B_x$ for $x \in \Lambda_u$,
    \begin{align*}
        \nu_{\Lambda, \beta, \rho}^{\xi} [\varphi_x \geq u_x, \forall x \in \Lambda'] &\leq \prod_{x \in \Lambda_{u}} \frac{1}{\rho_{a}(\mathbb{R})} \int_{u_x}^{\infty} \mathrm{d} \rho_{a} (\varphi_x)\\
        &\leq \nu_{\Lambda', 0, \rho_a}^{0}[\varphi_x + B_x \geq u_x, \forall x \in \Lambda' \mid \varphi_x \geq 0, \forall x \in \Lambda'].
    \end{align*} 
\end{proof}

\section{Stochastic domination by product measure}\label{sec: stochastic domination}

As we showed in Corollary~\ref{domination corollary}, regularity allows us to deduce that the spin measures we consider in this paper are in a weak sense dominated by product measures. In this section, we upgrade this result to full stochastic domination by a product measure. We then use this result to give alternative definitions of the plus measure that satisfy certain desirable properties.

In this section, we assume $\rho$ is an even probability measure on $\mathbb R$ satisfying \eqref{eq: single-site n=2} and we consider nearest-neighbour interactions on a graph $G=(V,E)$ of degree bounded by some $D>0$. In order to treat boundary conditions, given $\Lambda\Subset V$ and $\xi \in \mathbb{R}^{V}$, let $\kappa=\kappa(\Lambda,\xi) \in \mathbb{R}^\Lambda$ be given by
\[
\kappa_x=\sum_{y\in \partial^{\mathrm{ext}} \Lambda} \frac{|\xi_y|}{D^{2 d_G(x,y)}}.
\]

\begin{theorem}
\label{thm:product domination}
Let $G=(V,E)$ be a graph of maximal degree $D<\infty$ and consider nearest-neighbour interactions on $G$. Fix $\beta_0>0$ and let $\rho$ be an even
probability measure on $\mathbb R$ satisfying \eqref{eq: single-site n=2}.
There exists a probability measure
$\mu_*$ on $\mathbb R^+$ satisfying
\[
\int e^{\lambda t^2}\,\mathrm{d}\mu_*(t)<\infty
    \qquad\forall \lambda>0,
\]
such that, for every $\beta\in [0,\beta_0]$, for every finite
$\Lambda\subset V$ and every $\xi \in \mathbb{R}^{V}$,
\[
\nu^{\xi}_{\Lambda,\beta, \rho}(|\cdot|)
    \preceq 
    \kappa+\mu_{*}^{\otimes\Lambda}.
\]
\end{theorem}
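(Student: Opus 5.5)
We will prove the domination by a sequential coupling argument (Holley-type / conditional domination criterion). Order the vertices of $\Lambda$ as $x_1,\dots,x_N$. It suffices to show that for every $k$ and every configuration of the absolute values $(|\varphi_{x_1}|,\dots,|\varphi_{x_{k-1}}|)$, the conditional law of $|\varphi_{x_k}|$ under $\nu^{\xi}_{\Lambda,\beta,\rho}$ is stochastically dominated by $\kappa_{x_k}+\mu_*$. A standard quantile coupling then builds the product coupling vertex by vertex, giving $\nu^{\xi}_{\Lambda,\beta,\rho}(|\cdot|)\preceq\kappa+\mu_*^{\otimes\Lambda}$.

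The conditional law of $|\varphi_{x_k}|$ is a mixture over the remaining spins. By Proposition \ref{b.c monotonicity}(ii), conditioning on $|\varphi|$ of other vertices and integrating out the rest should reduce, via the domain Markov property, to bounding single-site laws $\nu^{\eta\cup\xi}_{\{x\},\beta,\rho}(|\cdot|)$ where $\eta$ is the (random) configuration of the other spins. For a single site with external field $h=\beta\sum_{y\sim x}|\eta_y|$, the law of $|\varphi_x|$ has density proportional to $\cosh(hu)\,\mathrm{d}\rho(u)$ on $\mathbb R^+$, which is dominated by $\tilde h+\mu_0$ with $\tilde h = c\,h$ small relative to the neighbours and $\mu_0$ having Gaussian moments, because $\rho$ has all Gaussian moments and the tilt $e^{hu}$ can be absorbed by writing $hu\le \epsilon u^2+h^2/(4\epsilon)$ and shifting. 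This however gives domination only conditional on the neighbours, so we will instead aim directly for a self-consistent bound: define $\mu_*$ through a fixed-point/iteration argument, showing that if neighbours are dominated by $\kappa+\mu_*$ then so is $x$. The weights $D^{-2d_G(x,y)}$ in $\kappa$ are designed exactly for this: a neighbour contributes $\beta|\varphi_y|$ to the field, and the boundary effect $|\xi_y|$ decays by a factor $D^{-2}$ per step, summable over the at most $D^{k}$ vertices at distance $k$.

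The main obstacle is that conditioning on only some absolute values does not give a clean local field: the unconditioned spins are random and unbounded. To handle this, I would combine the regularity estimate (Theorem \ref{Theorem: n=2}, in the vertex-dependent form of Remark \ref{remark:changed single-site}) with the weak domination Corollary \ref{domination corollary}: conditioning on any set of absolute values yields a measure on the remaining vertices with modified single-site measures satisfying \eqref{A1}--\eqref{A3} (uniformly in $\beta\le\beta_0$), to which regularity applies with boundary data given by the conditioned values. Then $A(x,\cdot)$ is controlled by $\kappa_x$ plus the conditioned neighbours, and Corollary \ref{domination corollary} gives the tail of $|\varphi_{x_k}|$ as $\kappa_{x_k}$ plus a Gaussian-tailed shift $B_x$ depending quadratically on $A$. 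The delicate step is making $\mu_*$ independent of the conditioning: I would first try rescaling the conditioned values' contributions using that $\beta\le\beta_0$ and choosing $C$ large in $A$ so that $B_x$ depends only on $\kappa_x$ and a bounded constant, then taking $\mu_*$ to be the law of $\sup$ of these shifted Gaussian-tailed variables, which retains all Gaussian moments.
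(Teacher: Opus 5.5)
Your first reduction already fails. You propose to verify that for \emph{every} value of $(|\varphi_{x_1}|,\dots,|\varphi_{x_{k-1}}|)$ the conditional law of $|\varphi_{x_k}|$ is dominated by the fixed law $\kappa_{x_k}+\mu_*$. This is false as soon as $\beta>0$ and $\rho$ has unbounded support.

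Take $\Lambda=\{x,y\}$ with $x\sim y$, $\xi=0$ and $\mathrm{d}\rho(u)\propto e^{-u^4}\mathrm{d}u$. Given $|\varphi_x|=M$, the law of $\varphi_y$ has density proportional to $\cosh(\beta M u)\,\mathrm{d}\rho(u)$, which concentrates near $|u|\approx(\beta M/4)^{1/3}\to\infty$. So $\mathbb{P}(|\varphi_y|\ge t\mid |\varphi_x|=M)\to 1$ for every $t$. In any ordering of a $\Lambda$ containing an edge, some vertex is revealed after one of its neighbours, so no probability measure $\mu_*$ independent of the conditioning can satisfy your criterion.

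The repairs you sketch do not get around this.
\begin{itemize}
\item A fixed-point statement of the form ``if the neighbours are dominated by $\kappa+\mu_*$ then so is $x$'' only propagates one-site marginal bounds. That is much weaker than domination of the whole field by a product measure.
\item Theorem~\ref{Theorem: n=2} with Corollary~\ref{domination corollary} gives only weak (orthant) domination. As the paper notes, this does not imply stochastic domination of the field.
\item Once revealed spins act as boundary data, $A(x,\cdot)$ and hence $B_x$ grow without bound in the revealed neighbour values, for any fixed $C$. So your ``sup of the shifted variables'' is infinite, not a probability measure with Gaussian moments.
\end{itemize}

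What is missing is a mechanism that produces product domination without uniform conditional bounds. Your single-site observation is essentially Lemma~\ref{lem: laplacian domination}: given all other spins, $|\varphi_x|\preceq\varepsilon H_x+\pi_a$ with $a=\beta/\varepsilon$. This uses $\beta H_x t\le a t^2$ for $t\ge\varepsilon H_x$ together with $Z_{H_x}\ge 1$; it is a shift argument, not completing the square.

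The paper uses this bound inside heat-bath Glauber dynamics, as follows.
\begin{itemize}
\item It couples the chain with the linear majorant $V^{(n)}_{x_n}=\eta_{x_n}+Y_n+\varepsilon\sum_{z\sim x_n}V^{(n-1)}_z$, driven by i.i.d.\ $Y_n\sim\pi_a$. Large neighbour values are thus carried along linearly with a small factor, instead of having to be bounded.
\item This gives $V^{(n)}=\sum_m c^{(n,m)}(\eta_{x_m}+Y_m)$ with $c^{(n,m)}_y\le W_{x_m,y}$, and the deterministic part is at most $\kappa$.
\item Each perfectly correlated field $c^{(n,m)}Y_m$ is dominated by the independent field $(\ell^{(n,m)}\sqrt{c^{(n,m)}_x}\,Y^{(n,m)}_x)_x$. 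This uses the splitting inequality $F(\lambda t)\ge F(t)^{\lambda}$ for $\lambda\in[0,1]$, where $F(t)=\pi_a([t,\infty))=e^{-t^2L_1(t)}$ with $L_1$ non-decreasing.
\item The bounds on $\sum_m (c^{(n,m)}_x)^q$ from Lemma~\ref{lem: c_y q bound} let a union bound over $m$ produce a single $\mu_*$.
\item Letting $n\to\infty$ transfers the domination to $\nu^{\xi}_{\Lambda,\beta,\rho}$.
\end{itemize}
The product structure thus comes from the independent innovations $Y_m$, not from conditional laws.
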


We will split the proof into several lemmas. In order to state our first lemma, let $\rho_+$ denote the pushforward of $\rho$ under $t\mapsto |t|$.
Given $a>0$, let $\mu_a$ be the probability measure on $\mathbb R^+$ defined for $t\geq 0$ by
\[
\mu_a([t,\infty))
\coloneq\min\left\{1,\int_t^{\infty} e^{a s^2}\,\mathrm{d}\rho_+(s)\right\}.
\]
This indeed defines a probability measure since the right-hand side is non-increasing and left-continuous. 

To simplify the definitions of the measures we will consider in the proof of Theorem~\ref{thm:product domination}, we will work with another single-site measure instead of $\rho_+$ that has ``nicer'' tails. Note that there exists a continuous non-decreasing function $L_0:[0,\infty)\to [0,\infty)$ such that $L_0(t)\to \infty$ as $t\to\infty$ and for every $t\geq 0$ we have
\begin{equation*}
\rho_+([t,\infty))\leq e^{-t^2L_0(t)}.
\end{equation*}
Indeed, by Markov's inequality for every $n\geq 1$ there exists $M_n>0$ such that for every $t\geq 0$ we have that 
\[
\rho_+([t,\infty))\leq M_n e^{-nt^2},
\]
which implies the existence of the function $L_0$. Next, let us define the measure $\pi=\pi(L_0)$ by its tails: for every $t\geq 0$ 
\begin{equation*}
\pi([t,\infty))=e^{-t^2 L_0(t)}.    
\end{equation*}
This indeed defines a probability measure, since $e^{-t^2L_0(t)}$ is non-increasing, continuous and tends to $0$ at $t\to\infty$. Note that $\mu_a$ is stochastically dominated by $\pi_a$, where the measure $\pi_a$ is defined by the formula
\begin{equation*}
\pi_a([t,\infty))=e^{-t^2 L_1(t)},    
\end{equation*}
for a continuous non-decreasing function $L_1:[0,\infty)\to [0,\infty)$ such that \begin{equation}\label{eq: L_1 def}
L_1(t)=L_0(t)/2
\end{equation}
for every $t$ large enough.

\begin{lemma}
\label{lem: laplacian domination}
Fix $\beta>0$. For every $\varepsilon>0$ and for every $\xi\in \mathbb{R}^{V}$ and every $x\in \Lambda$, 
\[\nu_{\Lambda,\beta,\rho}^{\xi}(|\cdot| \mid \varphi_y=\eta_y \, \forall \, y\in \Lambda\setminus \{x\}) \preceq  \varepsilon H_x+\pi_a,
\]
where 
\[
H_x\coloneq\sum_{\substack{y\in\Lambda\\y\sim x}}|\eta_y| + \sum_{\substack{y \in \partial^{\mathrm{ext}} \Lambda \\ y \sim x}} |\xi_y|,
\]
and $a=\beta/\varepsilon$.
\end{lemma}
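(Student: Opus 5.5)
The plan is to reduce everything to a one-dimensional comparison of tails. By the domain Markov property, the conditional law of $\varphi_x$ given $\varphi_y=\eta_y$ for all $y\neq x$ is the single-site measure
\[
\mathrm{d}\nu(\varphi_x)\propto e^{\beta h\varphi_x}\,\mathrm{d}\rho(\varphi_x),
\qquad h=\sum_{y\in\Lambda,\,y\sim x}\eta_y+\sum_{y\in\partial^{\mathrm{ext}}\Lambda,\,y\sim x}\xi_y ,
\]
and $|h|\le H_x$. Its image under $t\mapsto|t|$ is then, using that $\rho$ is even, the measure on $\mathbb R^+$ with density proportional to $\cosh(\beta h s)$ with respect to $\rho_+$. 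Since both sides of the claimed domination are laws on $\mathbb R^+$, it suffices to show that for every $t\ge0$,
\[
P(t)\coloneq\frac{\int_{[t,\infty)}\cosh(\beta h s)\,\mathrm{d}\rho_+(s)}{\int_{[0,\infty)}\cosh(\beta h s)\,\mathrm{d}\rho_+(s)}\le \pi_a([t-\varepsilon H_x,\infty)),
\]
with the convention that the right-hand side equals $1$ when $t\le \varepsilon H_x$. The case $t\le\varepsilon H_x$ is therefore trivial, and I will assume $s\ge t>\varepsilon H_x$ throughout.

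For the numerator, I use $\cosh(\beta h s)\le e^{\beta H_x s}$, and then compare $\beta H_x s$ with a quadratic in the shifted variable $u=s-\varepsilon H_x>0$. Writing $s=u+\varepsilon H_x$ and $a=\beta/\varepsilon$,
\[
\beta H_x s=\beta H_x u+\beta\varepsilon H_x^2 \le a(u^2+\varepsilon^2H_x^2)+\beta\varepsilon H_x^2 ,
\]
using $H_xu\le \frac{1}{2\varepsilon}(u^2+\varepsilon^2H_x^2)$ up to constants. This bound is not good enough, because of the leftover $H_x^2$ term. The better route is the elementary inequality
\[
\beta H_x s\le \frac{\beta}{\varepsilon}s^2=a s^2 \qquad\text{for } s\ge\varepsilon H_x .
\]
This gives numerator $\le \int_t^\infty e^{as^2}\,\mathrm{d}\rho_+(s)$. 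For the denominator, $\cosh\ge1$ and $\rho_+$ is a probability measure, so the denominator is at least $1$. Hence, for $t>\varepsilon H_x$,
\[
P(t)\le\min\Big\{1,\int_t^\infty e^{as^2}\,\mathrm{d}\rho_+(s)\Big\}=\mu_a([t,\infty))\le \pi_a([t,\infty))\le\pi_a([t-\varepsilon H_x,\infty)),
\]
using the stated domination $\mu_a\preceq\pi_a$ and the monotonicity of tails. This gives the claim. In fact it gives the slightly stronger statement that the law is dominated by $\max\{\varepsilon H_x,\,\cdot\,\}$ applied to a $\pi_a$-variable, which is itself dominated by $\varepsilon H_x+\pi_a$.

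The main obstacle I expect is not in the estimates but in checking the bookkeeping. First, the conditional measure must be written exactly as above: the boundary term involves only the neighbours in $\partial^{\mathrm{ext}}\Lambda$ with $J=1$, which matches $H_x$. Second, the normalisation must be handled by $\cosh\ge1$; this is where evenness of $\rho$ is used, since without it the denominator $\int e^{\beta h s}\,\mathrm{d}\rho$ could be small. Third, $\pi_a$ must genuinely dominate $\mu_a$ for the given $a$. The paper asserts this with $L_1=L_0/2$ for large $t$; I will take it as given, noting that it needs $a<L_0(t)/2$ eventually, which holds since $L_0\to\infty$. If that step needed care, I would check it directly: $\int_t^\infty e^{as^2}\,\mathrm{d}\rho_+\le e^{-t^2(L_0(t)-a)}$ up to constants via integration by parts, and this is at most $e^{-t^2L_0(t)/2}$ once $L_0(t)\ge 2a+$const.
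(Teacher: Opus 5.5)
Your argument is correct and is essentially the paper's computation. The law of $|\varphi_x|$ has density $\cosh(\beta h s)/Z$ with respect to $\rho_+$, with $Z\geq 1$ by evenness; then $\cosh r\leq e^{r}$, $\beta H_x s\leq a s^2$ once $s\geq \varepsilon H_x$, and $\mu_a\preceq\pi_a$. The only difference is that the paper first replaces the boundary values by $|\xi\cup\eta|$ using the absolute-value monotonicity of Proposition~\ref{b.c monotonicity}(ii). You instead keep the signed field $h$ and bound $\cosh(\beta h s)\leq e^{\beta|h|s}\leq e^{\beta H_x s}$ directly, which makes that appeal unnecessary.
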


\begin{proof}
Fix $\varepsilon>0$.
Also fix $x \in \Lambda$ and condition on $\varphi_y=\eta_y$ for $y\neq x$. Let $|\xi \cup \eta| \in \mathbb{R}^{V}$ be the boundary conditions given by $|\xi \cup \eta|_y = |\eta_y|$ if $y \in \Lambda \setminus \{x\}$, and $|\xi \cup \eta|_y = |\xi_y|$ otherwise. By the domain Markov property and Proposition \ref{b.c monotonicity},
\[
\nu_{\Lambda,\beta,\rho}^{\xi}(|\cdot| \mid \varphi_y=\eta_y \, \forall \, y\in \Lambda\setminus \{x\}) \preceq \nu_{\{x\}, \beta, \rho}^{|\xi \cup \eta|}(|\cdot|).
\]
Note that for every $t\geq 0$
\[
\mathrm{d}\nu_{\{x\},\beta,\rho}^{|\xi \cup \eta|}(|\varphi_x|=t)=\frac{\cosh(\beta H_x t)}{Z_{H_x}} \mathrm{d}\rho_+(t),
\]
where
\[
Z_{H_x}\coloneq\int \cosh(\beta H_x t)\,\mathrm{d}\rho_+(t).
\]
Since $\cosh\geq1$, we have $Z_{H_x}\geq1$, and since
$\cosh r\leq e^r$ for $r\geq0$,
\[
\mathrm{d}\nu_{\{x\},\beta,\rho}^{|\xi \cup \eta|}(|\varphi_x|=t)
\leq e^{\beta H_x t}\,\mathrm{d}\rho_+(t).
\]

Note that if $t\geq \varepsilon H_x$, then 
\[
\beta H_x t\leq a t^2.
\]
Let $u\geq0$. Then
\begin{align*}
    \nu_{\{x\},\beta,\rho}^{|\xi \cup \eta|}
    \bigl(|\varphi_x|\geq \varepsilon H_x+u\bigr)
    \leq
    \int_{\varepsilon H_x+u}^{\infty}
    e^{\beta H_x t}\,\mathrm{d}\rho_+(t) 
    \leq
    \int_{\varepsilon H_x+u}^{\infty}
    e^{a t^2}\,\mathrm{d}\rho_+(t) 
    \leq
    \int_{u}^{\infty}
    e^{a t^2}\,\mathrm{d}\rho_+(t).
\end{align*}
Now the latter integral is either at most $1$, in which case it coincides with $\mu_a([u,\infty))\leq \pi_a([u,\infty))$, or it is larger than $1$, in which case $\mu_a([u,\infty))=\pi_a([u,\infty))=1$. In both cases, 
$\nu_{\{x\},\beta,\rho}^{|\xi \cup \eta|} \bigl(|\varphi_x|\geq \varepsilon H_x+u\bigr)\leq \pi_a([u,\infty))$.
This proves that
\[
\nu_{\{x\},\beta,\rho}^{|\xi \cup \eta|}(|\cdot|)
\preceq \varepsilon H_x+\pi_a,
\]
as desired.
\end{proof}

The above lemma seems to suggest that the massive Laplacian of $|\varphi|$, appropriately defined, is stochastically dominated by a product measure. Applying the inverse of the massive Laplacian to this product measure, we end up with a field that has exponential decay of correlations, and one might hope to use that to prove that the latter field is stochastically dominated by another product measure. While our proof of Theorem~\ref{thm:product domination} does not follow this strategy, it is inspired by such ideas.

Our strategy involves working with the heat-bath Glauber dynamics for
$\nu^{\xi}_{\Lambda,\beta,\rho}$, which we couple with an auxiliary
discrete-time Markov chain. We work with the
discrete-time Glauber dynamics, which can be made aperiodic by making the Markov chain lazy. Below, to simplify the notation, we only observe the Markov chain at the times it makes a jump.

Let $(x_n)_{n\geq 1}$ be a sequence of i.i.d.\ vertices of $\Lambda$ chosen uniformly at random. We define $\varphi^{(n)}, V^{(n)} \in \mathbb{R}^{\Lambda}$ for every $n \geq 0$ as follows. For $n=0$, we let $\varphi^{(0)}=V^{(0)}=0$. For $n\geq 1$, if $y\neq x_n$ we set $\varphi^{(n)}_y=\varphi^{(n-1)}_y$, and if $y=x_n$, 
we sample $\varphi^{(n)}_y \sim \nu_{\Lambda,\beta,\rho}^{\xi}( \ \cdot \mid \varphi_z=\varphi^{(n-1)}_z \, \forall \, z\in \Lambda\setminus \{y\})$ and $Y_n\sim \pi_a$ from a monotone coupling such that almost surely, 
\[
|\varphi^{(n)}_y|\leq \varepsilon\sum_{\substack{z\in\Lambda\\z\sim y}}|\varphi^{(n-1)}_z| + \varepsilon\sum_{\substack{z \in \partial^{\mathrm{ext}} \Lambda \\ z \sim y}} |\xi_z|+Y_n,
\]
with $Y_n$ independent from the past and from $(x_n)_{n\geq 1}$. The latter is possible by Lemma~\ref{lem: laplacian domination} and Strassen's theorem~\cite{Strassen1965}. 

Then for every $n \geq 1$ we set
\[
V^{(n)}_y= \begin{cases}\eta_y+Y_n+\varepsilon\sum_{z\sim y}V^{(n-1)}_z & \text{if } y = x_n,\\
V^{(n-1)}_y & \text{if } y \neq x_n,
\end{cases}
\]
where 
\begin{equation*}
\eta_y\coloneq\varepsilon\sum_{\substack{z \in \partial^{\mathrm{ext}} \Lambda \\ z \sim y}}|\xi_z|.
\end{equation*}

Let us first record some basic properties of $V^{(n)}$. 

\begin{lemma}\label{lem: phi V coupling}
For every $n\geq 0$, almost surely
\[
|\varphi^{(n)}|\leq V^{(n)}.
\]
\end{lemma}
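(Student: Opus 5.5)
Induction on $n$ will do it. The base case is trivial: $\varphi^{(0)}=V^{(0)}=0$, so $|\varphi^{(0)}|\le V^{(0)}$ holds with equality.

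For the inductive step, assume $|\varphi^{(n-1)}_z|\le V^{(n-1)}_z$ for all $z\in\Lambda$ almost surely, and look at step $n$ with updated vertex $y=x_n$.

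\emph{Vertices $z\neq x_n$.} Nothing changes: $\varphi^{(n)}_z=\varphi^{(n-1)}_z$ and $V^{(n)}_z=V^{(n-1)}_z$, so the inequality carries over from the hypothesis.

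\emph{The vertex $y=x_n$.} The monotone coupling, which exists by Lemma~\ref{lem: laplacian domination} and Strassen's theorem, gives almost surely
\[
|\varphi^{(n)}_y|\le \varepsilon\sum_{\substack{z\in\Lambda\\ z\sim y}}|\varphi^{(n-1)}_z| + \eta_y + Y_n .
\]
Since $\varepsilon\ge 0$ and the coefficients are non-negative, the induction hypothesis bounds the first sum by $\varepsilon\sum_{z\in\Lambda,\,z\sim y}V^{(n-1)}_z$. This gives
\[
|\varphi^{(n)}_y|\le \eta_y+Y_n+\varepsilon\sum_{z\sim y}V^{(n-1)}_z = V^{(n)}_y .
\]

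One point needs care. In the definition of $V^{(n)}$, the sum $\sum_{z\sim y}V^{(n-1)}_z$ must be read as running over neighbours $z\in\Lambda$, since $V$ is only defined on $\Lambda$. Alternatively, one can set $V\equiv 0$ outside $\Lambda$, because the boundary contribution is already carried by $\eta_y$. With either reading the two sums match.

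The only genuinely non-trivial ingredient is the coupling inequality itself, and that is already supplied by the construction. The rest is the monotonicity of the linear update rule for $V$ in its non-negative inputs, together with the fact that $V^{(n)}\ge 0$ (an immediate induction, since $Y_n\ge0$ and $\eta_y\ge0$).
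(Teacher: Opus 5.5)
Your proposal is correct and follows the paper's proof: induction on $n$, with unchanged coordinates handled by the induction hypothesis and the updated vertex $x_n$ handled by the coupling inequality (the $\xi$-term being exactly $\eta_y$) followed by the termwise bound $|\varphi^{(n-1)}_z|\le V^{(n-1)}_z$. Your remark that $\sum_{z\sim y}V^{(n-1)}_z$ runs over $z\in\Lambda$ is the right reading of the paper's notation. The nonnegativity of $V^{(n)}$ that you mention is not actually needed for the argument.
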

\begin{proof}
We prove the statement by induction on $n$. For $n=0$, this is immediate. Assume it is true for $n-1$. If $y\neq x_n$, then 
\[
|\varphi^{(n)}_y|=|\varphi^{(n-1)}_y|\leq V^{(n-1)}_y=V^{(n)}_y
\]
by the induction hypothesis.
If $y=x_n$, by the induction hypothesis and the definitions,
\[
V^{(n)}_y= \eta_y+Y_n+\varepsilon\sum_{z\sim y}V^{(n-1)}_z\geq \eta_y+Y_n+\varepsilon\sum_{z\sim y}|\varphi^{(n-1)}_z|\geq |\varphi^{(n)}_y|.
\]
\end{proof}

Let $c^{(n,m)}=c^{(n,m)}(\varepsilon)\in [0,\infty)^{\Lambda}$ be defined for every $1\leq m\leq n$ as follows. Set $c^{(m,m)}_y=\mathbbm{1}_{y=x_m}$ and for $n>m$, $c^{(n,m)}_y=c^{(n-1,m)}_y$ if $y\neq x_n$ and $c^{(n,m)}_y=\varepsilon \sum_{z\sim y} c^{(n-1,m)}_z$ if $y=x_n$. 

\begin{lemma}\label{lem: V^n compact form}
For every $n\geq 1$ and every $y\in \Lambda$,
\[
V^{(n)}_y=\sum_{m=1}^n c_y^{(n,m)}\eta_{x_m}+\sum_{m=1}^n c_y^{(n,m)} Y_m.
\]
\end{lemma}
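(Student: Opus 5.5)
The plan is induction on $n$, using the recursive definitions of $V^{(n)}$ and $c^{(n,m)}$ directly. For $n=1$ we have $V^{(0)}=0$, so $V^{(1)}_y=(\eta_{x_1}+Y_1)\mathbbm{1}_{y=x_1}$. Since $c^{(1,1)}_y=\mathbbm{1}_{y=x_1}$, this is exactly the claimed formula.

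Now assume the formula holds at time $n-1\geq 1$ for all $y\in\Lambda$, and split according to whether $y=x_n$.

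If $y\neq x_n$, then $V^{(n)}_y=V^{(n-1)}_y$. Also $c^{(n,m)}_y=c^{(n-1,m)}_y$ for every $m\leq n-1$, and $c^{(n,n)}_y=\mathbbm{1}_{y=x_n}=0$. Hence the sums up to $n$ coincide with the sums up to $n-1$, and the induction hypothesis gives the claim.

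If $y=x_n$, the definition and the induction hypothesis applied to each neighbour $z$ give
\[
V^{(n)}_y=\eta_y+Y_n+\varepsilon\sum_{z\sim y}\sum_{m=1}^{n-1}c^{(n-1,m)}_z\bigl(\eta_{x_m}+Y_m\bigr).
\]
We exchange the two finite sums, which requires only finitely many neighbours (degree at most $D$). By the recursion, for $m<n$ we have $\varepsilon\sum_{z\sim y}c^{(n-1,m)}_z=c^{(n,m)}_y$ when $y=x_n$. Moreover $\eta_y+Y_n=c^{(n,n)}_y(\eta_{x_n}+Y_n)$, because $c^{(n,n)}_{x_n}=1$ and $y=x_n$. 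Combining these terms gives the formula at time $n$.

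The proof is essentially mechanical. The only point needing care is the sum over neighbours $z\sim y$: it must range over neighbours in $\Lambda$, with $V^{(n-1)}_z$ and $c^{(n-1,m)}_z$ taken to be $0$ (equivalently, excluded) for $z\notin\Lambda$. Exterior neighbours are already accounted for in $\eta_y$. The definitions of $V$ and $c$ must use the same convention for this, so I would state it explicitly at the start.
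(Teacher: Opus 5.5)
Your proposal is correct and follows the paper's proof essentially step for step: induction on $n$, with the base case read off from $V^{(0)}=0$ and $c^{(1,1)}_y=\mathbbm{1}_{y=x_1}$, and the inductive step split according to whether $y=x_n$, using $c^{(n,n)}_y=\mathbbm{1}_{y=x_n}$ and the recursion $c^{(n,m)}_{x_n}=\varepsilon\sum_{z\sim x_n}c^{(n-1,m)}_z$ for $m<n$. Your remark that the neighbour sums range only over $z\in\Lambda$, with exterior neighbours absorbed into $\eta_y$, is a sensible clarification of a convention the paper leaves implicit.
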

\begin{proof}
We prove this by induction on $n$. The statement is true for $n=1$ by the definition of $V^{(1)}$ and $c^{(1,1)}$. Assume it is true for $n-1$. If $y\neq x_n$, then by definition of $V^{(n)}$ and our induction hypothesis
\[
V^{(n)}_y=\sum_{m=1}^{n-1} c_y^{(n-1,m)}\eta_{x_m}+\sum_{m=1}^{n-1} c_y^{(n-1,m)} Y_m.
\]
Since $c^{(n,m)}_y = c^{(n-1,m)}_y$ and $c^{(n,n)}_y = 0$, we get 
\[
V^{(n)}_y=\sum_{m=1}^n c_y^{(n,m)}\eta_{x_m}+\sum_{m=1}^{n} c_y^{(n,m)} Y_m.
\]
If $y=x_n$, then by the definition of $V^{(n)}$ and the induction hypothesis,
\[
V^{(n)}_y=c^{(n,n)}_y\eta_{x_n}+ c^{(n,n)}_y Y_n+ \varepsilon\sum_{m=1}^{n-1}\sum_{z\sim y} c_z^{(n-1,m)} (\eta_{x_m} + Y_m).
\]
The right-hand side coincides with $\sum_{m=1}^n c_y^{(n,m)}\eta_{x_m}+\sum_{m=1}^{n} c_y^{(n,m)} Y_m$, which completes the proof.    
\end{proof}

Let $A$ be the adjacency matrix of the subgraph of $G$ induced by $\Lambda$, and for $0 < \varepsilon<1/D$, define
the matrix 
\[
W=W(\varepsilon)\coloneq\sum_{k\geq0}\varepsilon^k A^k.
\]
Below, we collect some useful bounds that relate $c^{(n,m)}$ to $W$.

\begin{lemma}\label{lem: c_y bound}
Let $0<\varepsilon<1/D$. For every $n\geq m\geq 1$ and every $y\in \Lambda$,   
\[
c_y^{(n,m)}\leq W_{x_m,y}.
\]
\end{lemma}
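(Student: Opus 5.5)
We argue by induction on $n \geq m$, with $m$ fixed. The claim we propagate is the entrywise bound $c^{(n,m)}_y \leq W_{x_m,y}$ for all $y \in \Lambda$. The key identity comes from the Neumann series, which converges since $\varepsilon D < 1$ and the spectral radius of $A$ is at most $D$:
\[
W = I + \varepsilon A W = I + \varepsilon W A, \qquad\text{so}\qquad W_{x,y} = \mathbbm{1}_{x=y} + \varepsilon\sum_{z\sim y} W_{x,z},
\]
where the sum runs over neighbours $z$ of $y$ inside $\Lambda$. All entries of $W$ are non-negative because $A$ has non-negative entries. Thus $W_{x_m,\cdot}$ is a fixed point of the local update rule defining $c^{(n,m)}$, up to the extra non-negative term $\mathbbm{1}_{x_m=y}$.

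For the base case $n=m$, we have $c^{(m,m)}_y = \mathbbm{1}_{y=x_m}$. Since $W_{x_m,y} \geq I_{x_m,y} = \mathbbm{1}_{y=x_m}$, using the $k=0$ term and the non-negativity of the remaining terms, the bound holds.

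For the inductive step, assume $c^{(n-1,m)}_z \leq W_{x_m,z}$ for all $z \in \Lambda$. If $y \neq x_n$, then $c^{(n,m)}_y = c^{(n-1,m)}_y \leq W_{x_m,y}$ directly. If $y = x_n$, then
\[
c^{(n,m)}_y = \varepsilon\sum_{z\sim y} c^{(n-1,m)}_z \leq \varepsilon \sum_{z\sim y} W_{x_m,z} \leq \mathbbm{1}_{x_m=y} + \varepsilon\sum_{z\sim y}W_{x_m,z} = W_{x_m,y},
\]
by the identity above.

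The one point needing care is that the neighbour sum $\sum_{z\sim y}$ in the definition of $c^{(n,m)}$ ranges over $z \in \Lambda$, since $c$ is only defined on $\Lambda$. This matches the adjacency matrix $A$ of the induced subgraph, so the fixed-point identity applies exactly.

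There is no substantial obstacle beyond checking convergence of $W$ and this matching of index sets; the argument is a monotone comparison with the fixed point of the update rule.
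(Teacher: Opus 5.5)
Your proposal is correct and follows the paper's proof essentially verbatim: induction on $n$ with $m$ fixed, the base case $\mathbbm{1}_{y=x_m}\le W_{x_m,y}$, and the inductive step via the fixed-point identity $W_{x_m,y}=\mathbbm{1}_{y=x_m}+\varepsilon\sum_{z\sim y}W_{x_m,z}$. Your use of $W=I+\varepsilon WA$ (equivalently, the symmetry of $A$) to put the neighbour sum on the second index is the precise form of the identity the paper invokes as $W=I+\varepsilon AW$.
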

\begin{proof}
We fix $m \geq 1$ and prove the statement by induction on $n$. First note that
\[
c_y^{(m,m)}=\mathbbm 1_{\{y=x_m\}}\leq W_{x_m,y}.
\]
Now assume that $n > m$ and $c_y^{(n-1,m)} \leq W_{x_m,y}$. If $y\neq x_n$, then
\[
c_y^{(n,m)}=c_y^{(n-1,m)}\leq W_{x_m,y}.
\]
If $y=x_n$, by the induction hypothesis and the fact that $W=I+\varepsilon A W$,
\[
c_y^{(n,m)}
=\varepsilon\sum_{z\sim y}c_z^{(n-1,m)} \leq
\varepsilon\sum_{z\sim y}W_{x_m,z} 
\leq\mathbbm 1_{\{y=x_m\}}+\varepsilon\sum_{z\sim y}W_{x_m,z}
=W_{x_m,y}.
\]    
\end{proof}

\begin{lemma}\label{lem: c_y q bound}
Let $q\in (0,1]$ and $0<\varepsilon<D^{-1/q}$. For every $\xi \in [0,\infty)^{\Lambda}$,  every $n\geq 1$, and every $y\in \Lambda$,
\[
\sum_{m=1}^n (c^{(n,m)}_y)^q\xi_{x_m}\leq (W(\varepsilon^q)\xi)_y\leq \sum_{z\in \Lambda}\sum_{k\geq d_G(y,z)} \varepsilon^{qk} D^k \xi_z.  
\]    
\end{lemma}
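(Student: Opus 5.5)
The statement has two inequalities, and I would prove them separately. The first inequality compares the recursively defined coefficients $c^{(n,m)}$ with the Neumann series $W(\varepsilon^q)$, arguing as in Lemma~\ref{lem: c_y bound}. The second is a crude path-counting bound on the entries of $W(\varepsilon^q)$.

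\textbf{First inequality.} I would prove a stronger, vector-valued statement by induction on $n$. Define
\[
S^{(n)}_y=\sum_{m=1}^n (c^{(n,m)}_y)^q\xi_{x_m}.
\]
The claim is that $S^{(n)}_y\leq (W(\varepsilon^q)\xi)_y$ for every $y\in\Lambda$. Since $\varepsilon<D^{-1/q}$, we have $\varepsilon^q D<1$, so $W(\varepsilon^q)=\sum_k\varepsilon^{qk}A^k$ converges and satisfies $W(\varepsilon^q)=I+\varepsilon^qAW(\varepsilon^q)$. Both $A$ and $W$ are symmetric and have nonnegative entries.

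For $n=1$ we have $S^{(1)}_y=\mathbbm 1_{y=x_1}\xi_{x_1}\leq \xi_y\leq (W\xi)_y$.

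For the inductive step, first take $y\neq x_n$. Then $c^{(n,n)}_y=0$ and $c^{(n,m)}_y=c^{(n-1,m)}_y$ for $m<n$, so $S^{(n)}_y=S^{(n-1)}_y$ and the claim follows from the induction hypothesis.

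Now take $y=x_n$. Here $c^{(n,n)}_y=1$ and, for $m<n$, $c^{(n,m)}_y=\varepsilon\sum_{z\sim y}c^{(n-1,m)}_z$. Since $q\leq 1$, the map $t\mapsto t^q$ is subadditive on $[0,\infty)$, which gives
\[
(c^{(n,m)}_y)^q\leq \varepsilon^q\sum_{z\sim y}(c^{(n-1,m)}_z)^q.
\]
Summing over $m<n$ against $\xi_{x_m}\geq0$ and applying the induction hypothesis yields
\[
S^{(n)}_y\leq \xi_y+\varepsilon^q\sum_{z\sim y}S^{(n-1)}_z\leq \xi_y+\varepsilon^q\bigl(AW(\varepsilon^q)\xi\bigr)_y=(W(\varepsilon^q)\xi)_y.
\]
The subadditivity step is the only place where $q\leq1$ enters, and I expect it to be the one point needing care.

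\textbf{Second inequality.} I would expand
\[
(W(\varepsilon^q)\xi)_y=\sum_z\sum_{k\geq0}\varepsilon^{qk}(A^k)_{y,z}\xi_z .
\]
The entry $(A^k)_{y,z}$ counts walks of length $k$ from $y$ to $z$ inside $\Lambda$. It vanishes when $k<d_\Lambda(y,z)$, and hence whenever $k<d_G(y,z)$, because $d_\Lambda\geq d_G$. It is also at most $D^k$, since every vertex has degree at most $D$. Substituting these two facts gives the stated bound, and the resulting series converges because $\varepsilon^qD<1$.
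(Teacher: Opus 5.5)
Your proof is correct and follows the paper's argument essentially step for step: the same induction on $n$ with the case split $y\neq x_n$ versus $y=x_n$, the same use of subadditivity of $t\mapsto t^q$ together with $W(\varepsilon^q)=I+\varepsilon^q A W(\varepsilon^q)$, and the same walk-counting bound for the second inequality. Your write-up is slightly more explicit than the paper's, for instance in noting that $d_\Lambda\geq d_G$ and that $\varepsilon^q D<1$ ensures the series defining $W(\varepsilon^q)$ converges.
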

\begin{proof}

Let us first prove by induction that $\sum_{m=1}^n (c^{(n,m)}_y)^q\xi_{x_m}\leq (W(\varepsilon^q)\xi)_y$. Indeed, the bound clearly holds for $n=1$. Assume that it holds for $n-1$. If $y\neq x_n$, then 
\[
\sum_{m=1}^n (c^{(n,m)}_y)^q\xi_{x_m}=\sum_{m=1}^{n-1} (c^{(n-1,m)}_y)^q\xi_{x_m}\leq (W(\varepsilon^q)\xi)_y\] 
by the induction hypothesis. If $y=x_n$, then using the inequality $(a+b)^q\leq a^q+b^q$ for $a,b\geq 0$,
\begin{align*}
\sum_{m=1}^n (c_y^{(n,m)})^q\xi_{x_m}&= \xi_{x_n}+
\sum_{m=1}^{n-1}(c_x^{(n,m)})^q\xi_{x_m}= \xi_{x_n}+
\sum_{m=1}^{n-1}\Big(\varepsilon \sum_{\substack{z\in\Lambda\\z\sim y}} c_z^{(n-1,m)}\Big)^q\xi_{x_m}\\
&\leq\xi_{x_n}+\varepsilon^q
\sum_{\substack{z\in\Lambda\\z\sim y}} \sum_{m=1}^{n-1} (c_z^{(n-1,m)})^q\leq \xi_{x_n}+\varepsilon^q
\sum_{\substack{z\in\Lambda\\z\sim y}} (W(\varepsilon^q)\xi)_z= (W(\varepsilon^q)\xi)_y.
\end{align*}
The bound 
\[
(W(\varepsilon^q)\xi)_y\leq \sum_{z\in \Lambda}\sum_{k\geq d_G(y,z)} \varepsilon^{qk} D^k \xi_z  
\]
follows from the fact that we need at least $d_G(y,z)$ steps to go from $z$ to $y$ and the fact that there are at most $D^k$ walks of length $k$ starting from $z$.    
\end{proof}

We now proceed to the proof of Theorem~\ref{thm:product domination}. Our strategy is to work with the coupled Markov chains $(\varphi^{(n)},V^{(n)})$ and show that for each $n$, $V^{(n)}$ is stochastically dominated by a common product measure. To prove the latter, we will use the decomposition of Lemma~\ref{lem: V^n compact form}. We first show that each term $c^{(n,m)}Y_m$ is stochastically dominated by an inhomogeneous product measure with all Gaussian moments. While this might seem counterintuitive at first glance due to the fact that the field $c^{(n,m)}Y_m$ is highly correlated, the fast spatial decay of the coefficients $c^{(n,m)}$ coming from Lemma~\ref{lem: c_y bound} allows us to compare the field to an independent one. Lemma~\ref{lem: c_y q bound} then provides us with the necessary integrability conditions, so that when we sum over $m$ we obtain another product measure with all Gaussian moments. 

\begin{proof}[Proof of Theorem~\ref{thm:product domination}]
Let $\varepsilon>0$ be a constant to be determined and set $a = \beta/\varepsilon$. We work with $(\varphi^{(n)},V^{(n)})$. Note that the invariant distribution of $\varphi^{(n)}$ is $\nu^{\xi}_{\Lambda,\beta,\rho}$, and by classical results, $\varphi^{(n)}$ converges to its invariant distribution as $n\to\infty$, see e.g.\ \cite{RT96}. Since $|\varphi^{(n)}|\leq V^{(n)}$ almost surely by Lemma \ref{lem: phi V coupling}, it suffices to prove that $V^{(n)}$ is stochastically dominated by a common product measure.

Recall Lemma~\ref{lem: V^n compact form}, and note that we can use Lemma~\ref{lem: c_y q bound} to get
\[
\sum_{m=1}^n c_y^{(n,m)}\eta_{x_m}\leq \kappa_y,
\]
provided $\varepsilon$ is chosen small enough.
It remains to handle
\[
R^{(n)}_y\coloneq\sum_{m=1}^n c_y^{(n,m)}Y_m.
\] 
We will prove that conditionally on $(x_n)_{n\geq 1}$, $R^{(n)}$ is stochastically dominated by the same product measure, which is enough to conclude. Note that conditioning on $(x_n)_{n\geq 1}$ fixes the value of $c^{(n,m)}$. 

We will use the following decomposition. For every $x\in \Lambda$ and $n \geq m \geq 1$, define 
\[
V_x^{(n,m)}\coloneq c_x^{(n,m)}Y_m,
\]
so that 
$R^{(n)}=\sum_{m=1}^n V^{(n,m)}$.
Let $(Y_x^{(n,m)})_{x\in\Lambda,\ 1\leq m\leq n}$ be independent random variables with law $\pi_a$, independent of everything else and define
\[
G_x^{(n,m)}\coloneq \ell^{(n,m)}\sqrt{c_x^{(n,m)}}\,Y_x^{(n,m)},
\]
where
\[
\ell^{(n,m)}\coloneq
\sum_{y\in\Lambda}\sqrt{c_y^{(n,m)}}.
\]
We claim that for every $1\leq m\leq n$, conditionally on $(x_n)_{n\geq 1}$,
\begin{equation}\label{eq: V G domination}
V^{(n,m)}\preceq G^{(n,m)}.
\end{equation}

To show the above stochastic domination, let $\mathcal E\subset(\mathbb R^+)^{\Lambda}$ be an increasing event. We will show that 
\[
\mathbb{P}(V^{(n,m)}\in \mathcal{E} \mid (x_n)_{n\geq 1}) \leq \mathbb{P}(G^{(n,m)}\in \mathcal{E} \mid (x_n)_{n\geq 1}).
\]
If $\ell^{(n,m)}=0$, then both $V^{(n,m)}$ and $G^{(n,m)}$ are equal to $0$, and there is nothing to prove, so let us assume that $\ell^{(n,m)}\neq 0$ and that $\mathbb{P}(V^{(n,m)}\in \mathcal{E} \mid (x_n)_{n\geq 1}) > 0$.
First notice that 
\[
\{V^{(n,m)}\in \mathcal{E}\}= \{(c^{(n,m)}Y_m)_{y\in \Lambda}\in \mathcal{E}\} = \{Y_m \in I_{n,m}\}
\]
for $I_{n,m}$ an interval of the form $[r, \infty)$ or $(r, \infty)$ for some $r \geq 0$.
We may assume that $I_{n,m}=[r,\infty)$ since $Y_m$ is a continuous random variable. Then $(c_{x}^{(n,m)}r)_{x\in \Lambda}\in\mathcal E$,
thus
\begin{align*}
    \mathbb P(G^{(n,m)}\in\mathcal E \mid (x_n)_{n\geq 1})
    &\geq\prod_{x\in \Lambda} \mathbb P\left(\ell^{(n,m)}\sqrt{c^{(n,m)}_{x}}Y^{(n,m)}_{x}\geq c^{(n,m)}_x r
    \Bigm| (x_n)_{n\geq 1}\right) \\
    &=\prod_{x\in \Lambda}\mathbb P\left(Y^{(n,m)}_{x}\geq \frac{\sqrt{c^{(n,m)}_{x}}}{\ell^{(n,m)}}r \biggm| (x_n)_{n\geq 1}\right).
\end{align*}
To bound the latter, let $F(t)\coloneq\pi_a([t,\infty))$
and note that for every $t\geq 0$ and every $0\leq \lambda \leq 1$ we have that 
\begin{equation}
\label{eq:splitting property}
F(\lambda t)=e^{-(\lambda t)^2 L_1(\lambda t)}\geq e^{-\lambda t^2 L_1(t)}=F(t)^{\lambda}.
\end{equation}  
Set $\lambda_x\coloneq\sqrt{c^{(n,m)}_{x}}/\ell^{(n,m)}$.
Then $\lambda_x\geq0$ and
\[
\sum_{x\in\Lambda}\lambda_x=1.
\]
Using \eqref{eq:splitting property},
\begin{equation*}
    \prod_{x\in\Lambda}
    \mathbb P\left(Y^{(n,m)}_x\geq \lambda_x r
    \mid (x_n)_{n\geq 1} \right)=\prod_{x\in\Lambda}
    F(\lambda_x r)       
   \geq \prod_{x\in\Lambda}
   F(r)^{\lambda_x}= F(r)=\mathbb{P}(Y_m \in I_{n,m} \mid (x_n)_{n\geq 1}).
\end{equation*}
This implies
\[
\mathbb P(G^{(n,m)}\in\mathcal E \mid (x_n)_{n\geq 1}) \geq
\mathbb P(V^{(n,m)}\in\mathcal E \mid (x_n)_{n\geq 1}).
\]
Hence, \eqref{eq: V G domination} holds,
as desired.

Considering fixed $n$ and conditioning on $(x_k)_{k\geq 1}$, the fields $(V^{(n,m)})_{1 \leq m \leq n}$ are independent of each other and
the fields $(G^{(n,m)})_{1 \leq m \leq n}$ are by construction independent of each other, so from \eqref{eq: V G domination} we get that
\[
R^{(n)}=\sum_{m=1}^n V^{(n,m)}
\preceq \sum_{m=1}^n G^{(n, m)} \eqcolon G^{(n)}.
\]

We now construct a probability measure $\mu_*$ with all Gaussian moments such that for any $y \in \Lambda$ and $n \geq 1$, the law of $G^{(n)}_y$ conditional on $(x_n)_{n \geq 1}$ is stochastically dominated by $\mu_*$. Since the law of $G^{(n)}$ conditional on $(x_n)_{n \geq 1}$ is a product measure, this will enable us to conclude that, conditionally on $(x_n)_{n \geq 1}$, $G^{(n)}$ is dominated by $(\mu_{*})^{\otimes\Lambda}$.

Set $S_x\coloneq\sum_{m=1}^n \ell^{(n,m)} (c^{(n,m)}_x)^{1/4}$. Then a union bound gives that for every $t\geq 0$,
\begin{align*}
\mathbb{P}(G^{(n)}_x\geq t \mid (x_n)_{n\geq 1})&\leq \sum_{m=1}^n \mathbb{P}\left(\ell^{(n,m)}\sqrt{c^{(n,m)}_x}Y^{(n,m)}_{x}\geq \frac{\ell^{(n,m)} (c^{(n,m)}_x)^{1/4}}{S_x}t \biggm| (x_n)_{n\geq 1}\right)\\ &=\sum_{m=1}^n \mathbb{P}\left(Y^{(n,m)}_x\geq \frac{1}{(c^{(n,m)}_x)^{1/4} S_x}t \biggm| (x_n)_{n\geq 1}\right).
\end{align*}
We claim that by choosing $\varepsilon>0$ to be small enough, we can make 
\begin{equation}\label{eq: c root bound}
\sum_{m=1}^n (c^{(n,m)}_x)^{1/4} S_x\leq 2, \quad \sum_{m=1}^n (c^{(n,m)}_x)^{1/2} S^2_x\leq 2.
\end{equation}
Indeed, by Lemma~\ref{lem: c_y bound},
\[
\ell^{(n,m)}\leq \sum_{y\in \Lambda} \sqrt{W_{x_m,y}},
\]
which, using the bound 
\[
W_{u,v}\leq \sum_{k\geq d_G(u,v)} \varepsilon^k D^k,
\] 
implies that $\limsup_{\varepsilon\to 0}\ell^{(n,m)}\leq 1$. By Lemma~\ref{lem: c_y q bound}, both $\sum_{m=1}^n (c^{(n,m)}_x)^{1/4}$ and $\sum_{m=1}^n (c^{(n,m)}_x)^{1/2}$ tend to $1$ as $\varepsilon\to 0$. Combining these two facts, we obtain \eqref{eq: c root bound}.

For every $t\geq 0$, by \eqref{eq: c root bound}
\begin{align*}
 \mathbb{P}(G^{(n)}_x\geq t \mid (x_n)_{n\geq 1})&\leq \sum_{m=1}^n \exp\left(-\tfrac{t^2}{(c^{(n,m)}_x)^{1/2} S^2_x}L_1\big(\tfrac{t}{(c^{(n,m)}_x)^{1/4} S_x}\big)\right) \\ &\leq \sum_{m=1}^n\exp\left(-\tfrac{t^2}{(c^{(n,m)}_x)^{1/2} S^2_x}L_1(t/2)\right) \\& \leq \sum_{m=1}^n \exp\left(-\tfrac{t^2}{2(c^{(n,m)}_x)^{1/2} S^2_x} L_1(t/2)\right) \exp\left(-\tfrac{t^2}{4}L_1(t/2)\right).
\end{align*}
This implies that for every $t\geq 0$ large enough so that $e^{-\tfrac{t^2}{4} L_1(t/2)}\leq 1/4$,
\begin{align*}
 \mathbb{P}(G^{(n)}_x\geq t \mid (x_n)_{n\geq 1})& \leq \sum_{m=1}^n \exp\left(-\tfrac{1}{2(c^{(n,m)}_x)^{1/2} S^2_x}\right) \exp\left(-\tfrac{t^2}{4}L_1(t/2)\right)
 \\& \leq \sum_{m=1}^n (c^{(n,m)}_x)^{1/2} S^2_x \, \exp\left(-\tfrac{t^2}{4}L_1(t/2)\right)\\& \leq 2 \exp\left(-\tfrac{t^2}{4}L_1(t/2)\right)
 \\& \leq \exp\left(-\tfrac{t^2}{8}L_1(t/2)\right),
\end{align*}
where we used the fact that $e^{-1/s}\leq s/2$ for every $s>0$ and \eqref{eq: c root bound}.
This gives that conditionally on $(x_n)_{n\geq 1}$, $G^{(n)}$ is stochastically dominated by the product of $\mu_*$, where $\mu_*$ is a probability measure on $\mathbb R^+$ defined by its tails: for every $t\geq 0$, $\mu_*([t,\infty))=e^{-t^2 L_2(t)}$, where $L_2(t)=0$ if $e^{-\tfrac{t^2}{4} L_1(t/2)} \geq 1/4$, 
\begin{equation}\label{eq: L_2 def}
L_2(t)=\tfrac{L_1(t/2)}{8}     
\end{equation}
if $e^{-\tfrac{t^2}{4} L_1(t/2)} \leq 1/8$, and $L_2(t)$ is an arbitrary non-decreasing continuous interpolation such that $L_2(t)\leq L_1(t/2)/8$ if $1/8 < e^{-\tfrac{t^2}{4} L_1(t/2)}< 1/4$. 

This also implies that, conditionally on $(x_n)_{n\geq 1}$, $R^{(n)}$ is stochastically dominated by the product of $\mu_*$.
Then the theorem follows from averaging over $(x_n)_{n\geq 1}$. This completes the proof.
\end{proof}

\subsection{Alternative construction of the plus measure}\label{sec: alternative construction}

One potential disadvantage of the construction in Proposition \ref{prop: maximal b.c} is that it relies on growing boundary conditions, so the finite-volume measures are not regular up to the boundary. In this section, we use the stochastic domination by a product measure of Theorem~\ref{thm:product domination} to give alternative definitions of the plus measure at finite volume that are regular up to the boundary and satisfy a monotonicity property in the volume with gaps, which is reminiscent of the corresponding property of the plus measure for the Ising model.

We give two constructions, one with random boundary conditions, the other by making the single-site measure depend on the vertex. As in the statement of Theorem \ref{thm:product domination}, we assume $\rho$ is an even probability measure on $\mathbb R$ satisfying \eqref{eq: single-site n=2} and we work with nearest-neighbour interactions on a graph $G=(V,E)$ of degree bounded by some constant $D>0$.

Let $\zeta = \zeta(\beta)$ be the probability measure on $[0, \infty)$ given by Proposition~\ref{prop: Lambda monotonicity} below.
Let us introduce the measure $\tilde{\nu}^0_{\Lambda,\beta,\rho}$ defined on any bounded measurable function $g: \mathbb{R}^{\Lambda} \rightarrow \mathbb{R}$ as
    \begin{equation*}
        \tilde{\nu}^0_{\Lambda,\beta,\rho}[g] = \int_{\xi \in \mathbb{R}^{\partial \Lambda}} \langle g^\xi\rangle_{\Lambda \setminus \partial \Lambda, \beta, \rho}^{\xi} \mathrm{d} \nu_{\partial \Lambda, 0, \zeta}^{0}(\xi),
    \end{equation*}
where $g^\xi(\varphi) = g(\varphi \cup \xi)$ with $\varphi \cup \xi$ the configuration equal to $\xi$ on $\partial \Lambda$ and equal to $\varphi$ on $\Lambda \setminus \partial \Lambda$.
In words, $\tilde{\nu}^0_{\Lambda,\beta,\rho}$ is a measure with random boundary conditions on $\partial \Lambda$ sampled from the product measure $\nu_{\partial \Lambda, 0, \zeta}^{0}$. 

\begin{proposition}\label{prop:random boundary conditions}
Let $G=(V,E)$ be a connected graph of degree bounded by $D>0$. Consider nearest-neighbour interactions on $G$ and let $\rho$ be an even probability measure. For every $\beta\geq 0$,
\[
\tilde{\nu}^0_{\Lambda,\beta,\rho}\rightarrow \nu^{+}_{\beta, \rho} \text{ as } \Lambda \nearrow V.
\]
\end{proposition}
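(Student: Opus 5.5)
The plan is to squeeze every subsequential limit of $\tilde{\nu}^0_{\Lambda,\beta,\rho}$ between $\nu^{+}_{\beta,\rho}$ from below and from above, in the stochastic order on increasing events depending on finitely many spins. Since such events determine the law, this identifies every subsequential limit with $\nu^{+}_{\beta,\rho}$. I will assume that the measure $\zeta$ of Proposition~\ref{prop: Lambda monotonicity} stochastically dominates the measure $\mu_*$ of Theorem~\ref{thm:product domination} and has all Gaussian moments. This is presumably how $\zeta$ is built there. The monotonicity-with-gaps statement of that proposition would even give convergence directly, but the sandwich argument does not need it.

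\textbf{Lower bound $\nu^{+}\preceq\liminf\tilde{\nu}^0_{\Lambda}$.}
\begin{enumerate}
\item Fix an increasing event $H$ supported in some $\Lambda'\Subset V$, and take $\Lambda\supset\Lambda'$ with $\Lambda'\cap\partial\Lambda=\emptyset$.
\item Apply the DLR equation for $\nu^{+}$ on $\Lambda\setminus\partial\Lambda$. This gives $\nu^{+}[H]=\int \langle H^{\xi}\rangle^{\xi}_{\Lambda\setminus\partial\Lambda}\,\mathrm{d}\nu^{+}(\xi)$, and with nearest-neighbour interactions only $\xi|_{\partial\Lambda}$ enters.
\item By Proposition~\ref{b.c monotonicity}(i), the map $\xi\mapsto\langle H^{\xi}\rangle^{\xi}_{\Lambda\setminus\partial\Lambda}$ is increasing. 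Hence I may replace $\xi$ by $|\xi|$ and then by any field that stochastically dominates $|\xi|$ on $\partial\Lambda$.
\item The remaining input is that the law of $|\varphi|_{\partial\Lambda}|$ under $\nu^{+}$ is dominated by $\mu_*^{\otimes\partial\Lambda}\preceq\zeta^{\otimes\partial\Lambda}$. To get it, apply Theorem~\ref{thm:product domination} to $\nu^{\xi^{+}}_{\Lambda_k}$, which gives the domination $\kappa(\Lambda_k,\xi^{+})+\mu_*^{\otimes\Lambda_k}$.
\item Check that $\kappa_x(\Lambda_k,\xi^{+})\to 0$ for fixed $x$. There are at most $D^{d}$ boundary vertices at distance $d$ from $x$. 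Each has $\xi^{+}_y\le\sqrt{(d+d_G(o,x))\log D+O(1)}$. The weight $D^{-2d}$ therefore wins.
\item Domination of finitely many coordinates passes to the weak limit. This yields $\nu^{+}[H]\le\tilde{\nu}^0_{\Lambda}[H]$ for all large $\Lambda$.
\end{enumerate}

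\textbf{Upper bound.} I would show that the family $(\tilde{\nu}^0_{\Lambda})$ is tight and that every limit point $\mu$ is an $a$-regular Gibbs measure for some $a>0$. The last assertion of Proposition~\ref{prop: maximal b.c} then gives $\mu\preceq\nu^{+}$.
\begin{enumerate}
\item \emph{Tightness.} Average Theorem~\ref{thm:product domination} over the random boundary. Under $\zeta^{\otimes\partial\Lambda}$, $\mathbb{E}\,\kappa_x=\sum_{y\in\partial\Lambda}\mathbb{E}_\zeta[\xi_y]D^{-2d_G(x,y)}$. This is at most $C D^{-d_G(x,\partial\Lambda)}$, which is uniformly bounded and in fact tends to $0$.
\item \emph{Gibbs property.} For $\Lambda''\Subset V$ and $\Lambda$ large, $\tilde{\nu}^0_{\Lambda}$ satisfies the DLR equation on $\Lambda''$ by the domain Markov property. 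To pass to the limit, I need $\xi\mapsto\langle g\rangle^{\xi}_{\Lambda''}$ to be continuous, which is clear for nearest-neighbour interactions since it is a ratio of integrals continuous in $\xi$. I also need the uniform Gaussian integrability from the regularity step below, so that no mass escapes.
\item \emph{Regularity, the main obstacle.} Condition on the boundary field $\xi$ and apply Theorem~\ref{Theorem: n=2} on $\Lambda\setminus\partial\Lambda$, together with Remark~\ref{remark:changed single-site}, for a fixed large $a$. This is allowed because $\rho$ has all Gaussian moments, and it lets me take $\lambda\ge D^2$.
\item By \eqref{eq: formula for A n=2}, $A(x)^2\le 1+\max_{z\in\partial\Lambda}\xi_z^2\lambda^{-2d_G(x,z)}/C^2$. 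So it suffices to bound $\mathbb{E}_\zeta\exp\big(\tilde{C}\sum_{x\in\Lambda'}A(x)^2\big)$ uniformly in $\Lambda$.
\item Using $\exp(\max_z u_z)\le\sum_z \exp(u_z)$ and Hölder over the finitely many $x\in\Lambda'$, this reduces to $\sum_{d}D^{d}\,\mathbb{E}_\zeta\exp(c\,\xi^2\lambda^{-2d})$. This is summable once $\lambda$ is large, since each term is close to $1$ and the sum is really controlled by $\sum_d D^d(\mathbb{E}e^{c\xi^2\lambda^{-2d}}-1)$, while the term with no boundary contribution is trivial.
\item If this bookkeeping fails, the fallback is to prove regularity of the limit directly from the averaged product domination. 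However, domination alone does not give a density bound, so the Theorem~\ref{Theorem: n=2} route is the one I would pursue.
\end{enumerate}

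\textbf{Conclusion.} Combining the two bounds, every subsequential limit $\mu$ satisfies $\nu^{+}\preceq\mu\preceq\nu^{+}$. Increasing events of finite support determine the measure, so $\mu=\nu^{+}_{\beta,\rho}$, and tightness gives convergence of the whole family $\tilde{\nu}^0_{\Lambda,\beta,\rho}$ to $\nu^{+}_{\beta,\rho}$.
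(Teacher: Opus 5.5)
Your proof is correct, and your upper bound takes a genuinely different route from the paper's.

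\textbf{The paper's route.} Existence of the limit comes from the monotonicity in volume with gaps (Proposition~\ref{prop: Lambda monotonicity}). The bound $\lim\tilde{\nu}^0_{\Lambda,\beta,\rho}\preceq\nu^{+}_{\beta,\rho}$ comes from comparison with the second construction. Corollary~\ref{cor: alternative measures}, which rests on the $\beta$-monotonicity of Lemma~\ref{Lemma: beta monotonicity} (FKG and Griffiths), gives $\tilde{\nu}^0_{\Lambda,\beta,\rho}\preceq\nu^0_{\Lambda,\beta,\tilde\rho}$. Then $\nu^0_{\Lambda,\beta,\tilde\rho}\to\nu^{+}_{\beta,\rho}$ by Proposition~\ref{prop: alternative plus measure}, and it is inside that proof that regularity (via Remark~\ref{remark:changed single-site}) and the maximality part of Proposition~\ref{prop: maximal b.c} are used.

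\textbf{Lower bound.} Yours is essentially the paper's. The paper compares $\nu^{\xi^{+}}_{\Lambda_{3i}}$ on $\partial\Lambda_i$ with $\zeta^{\otimes\partial\Lambda_i}$ at finite volume. You instead pass the product domination to $\nu^{+}_{\beta,\rho}$ first and then use its DLR equation. Your assumption $\mu_*\preceq\zeta$ is exactly what the paper uses there, and it holds because $L(t)=L_2(t/2)/8\le L_2(t)$ for $t\ge B$.

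\textbf{Upper bound.} You bypass the auxiliary measure $\nu^0_{\Lambda,\beta,\tilde\rho}$, Lemma~\ref{Lemma: beta monotonicity} and the volume monotonicity altogether. Instead you prove directly that every limit point is a regular Gibbs measure, by averaging Theorem~\ref{Theorem: n=2} over the random boundary. The price is the exponential-moment bookkeeping, and one point there should be stated explicitly. Your H\"older step puts a factor $|\Lambda'|$ into the exponent. This is harmless only because $d_G(\Lambda',\partial\Lambda)\to\infty$, which makes every term $\mathbb{E}_\zeta e^{c|\Lambda'|\xi^2\lambda^{-2d}}-1$ small. In the limit you then get $\frac{a}{2}$-regularity with $B=\tilde{C}$.

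Two smaller remarks on this part:
\begin{itemize}
\item Remark~\ref{remark:changed single-site} is not needed, since the single-site measure on $\Lambda\setminus\partial\Lambda$ is just $\rho$.
\item For the Gibbs property, boundedness and continuity of $\xi\mapsto\langle g\rangle^{\xi}_{\Lambda''}$ already suffice. No extra integrability is required to pass to the limit.
\end{itemize}

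\textbf{Trade-off.} Your route shows the result is robust: it only uses $\mu_*\preceq\zeta$ and some Gaussian moment of $\zeta$, not the specific gap-monotonicity construction of $\zeta$. The paper's route additionally yields monotone convergence and ties the two finite-volume constructions of the plus measure together.
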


For our second construction, we define a single-site measure $\tilde{\rho}_{x, \Lambda}$ by
\begin{equation*}
\mathrm{d}\tilde{\rho}_{x, \Lambda}(u) = \begin{cases}
        \mathrm{d} \zeta(u) & \text{if } x\in \partial \Lambda,\\
        \mathrm{d} \rho(u) & \text{otherwise.}
    \end{cases}
\end{equation*}
    
\begin{proposition}
\label{prop: alternative plus measure}
Let $G=(V,E)$ be a connected graph of degree bounded by $D>0$. Consider nearest-neighbour interactions on $G$ and let $\rho$ be an even probability measure. For every $\beta\geq 0$,
\begin{equation*}
        \nu_{\Lambda, \beta, \tilde{\rho}}^{0} \rightarrow \nu^{+}_{\beta, \rho} \text{ as } \Lambda \nearrow V.
    \end{equation*}
\end{proposition}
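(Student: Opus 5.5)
Since $\nu^{+}_{\beta,\rho}$ is the weak limit of $\nu^{\xi^+}_{\Lambda,\beta,\rho}$ (Proposition~\ref{prop: maximal b.c}), I will prove convergence via a sandwich on increasing events. Fix an increasing event $H$ depending on spins in a finite $\Lambda'$. Since increasing events generate the $\sigma$-algebra and $\nu^-\preceq\nu^+$ with both limits existing, it suffices to prove
\[
\limsup_{\Lambda\nearrow V}\nu^{0}_{\Lambda,\beta,\tilde\rho}[H]\le \nu^+[H],\qquad \liminf_{\Lambda\nearrow V}\nu^{0}_{\Lambda,\beta,\tilde\rho}[H]\ge \nu^+[H].
\]

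\textbf{Upper bound.} Conditioning on the spins on $\partial\Lambda$ and using the domain Markov property, $\nu^{0}_{\Lambda,\beta,\tilde\rho}$ becomes $\langle\cdot\rangle^{\eta}_{\Lambda\setminus\partial\Lambda,\beta,\rho}$, averaged over the marginal law of $\eta=\varphi|_{\partial\Lambda}$. I will show this marginal is dominated in absolute value by a product measure $\kappa'+\mu_*^{\otimes\partial\Lambda}$. This should follow from the argument of Theorem~\ref{thm:product domination}: the single-site measure $\zeta$ on the boundary has all Gaussian moments, so Lemma~\ref{lem: laplacian domination} still holds with $\zeta$ in place of $\rho$ at boundary vertices. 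Hence, with high probability, $|\eta_y|\le\xi^+_y$ for all $y$ far from $o$; this uses a union bound with Gaussian tails against $\sqrt{\log|B_{d(o,y)}(o)|}$, as in Lemma~\ref{maximal b.c lemma}. On this event, Proposition~\ref{b.c monotonicity} gives
\[
\langle H\rangle^{\eta}_{\Lambda\setminus\partial\Lambda}\le\nu^{\xi^+}_{\Lambda\setminus\partial\Lambda}[H],
\]
and the right-hand side converges to $\nu^+[H]$.

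\textbf{Lower bound.} This is the main obstacle. The choice of $\zeta$ in Proposition~\ref{prop: Lambda monotonicity} presumably makes the finite-volume measures stochastically decreasing in the volume, with gaps: for $\Lambda_1\subset\Lambda_2$ with large enough separation, $\nu^0_{\Lambda_2,\beta,\tilde\rho}|_{\Lambda_1\setminus\partial\Lambda_1}\preceq\nu^0_{\Lambda_1,\beta,\tilde\rho}|_{\Lambda_1\setminus\partial\Lambda_1}$. I would use this monotonicity directly to obtain $\lim\nu^{0}_{\Lambda}[H]=L$, and then identify $L$.

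To identify $L$, any subsequential weak limit is tight by the domination above, regular by Corollary~\ref{prop: limit of finite n=2}-type reasoning (the random boundary conditions are bounded in law by product measures), and Gibbs by the domain Markov property. So it is an $a$-regular Gibbs measure, and Proposition~\ref{prop: maximal b.c} gives $L\le\nu^+[H]$.

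For $L\ge\nu^+[H]$, I would compare with $\xi^+$ boundary conditions. Take $\Lambda_2\supset\Lambda$ and condition on the spins outside $\Lambda\setminus\partial\Lambda$. If $\zeta$ is chosen so that it dominates the law of $|\varphi_y|$ under any $\nu^{\xi}$ conditioned on neighbours bounded by $\xi^+$ (this is how I expect $\zeta$ to be constructed, via Theorem~\ref{thm:product domination} with $\kappa$ absorbed), then the boundary spins under $\tilde\rho$ dominate the spins of $\nu^{\xi^+}_{\Lambda_2}$ restricted to $\partial\Lambda$ on the good event $F$. Then FKG (Proposition~\ref{b.c monotonicity}) yields
\[
\nu^0_{\Lambda,\tilde\rho}[H]\ge\nu^{\xi^+}_{\Lambda_2}[H]-\nu^{\xi^+}_{\Lambda_2}[F^c].
\]
Letting $\Lambda_2\nearrow V$ and then $\Lambda\nearrow V$, using Lemma~\ref{maximal b.c lemma}, gives the bound. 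Verifying that $\zeta$ has exactly this domination property is the step I expect to require the most care.
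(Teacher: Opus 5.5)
Your upper bound works, and it takes a different, more direct route than the paper. The paper gets a monotone limit $\tilde\nu$ from Proposition~\ref{prop: Lambda monotonicity}, shows it is $\frac a2$-regular (via Remark~\ref{remark:changed single-site}) and Gibbs, and then uses the maximality in Proposition~\ref{prop: maximal b.c}. You instead compare directly with $\nu^{\xi^+}_{\Lambda\setminus\partial\Lambda}$, using uniform Gaussian tails of the boundary spins and a union bound as in Lemma~\ref{maximal b.c lemma}.

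\textbf{The gap is in the lower bound $L\ge\nu^+[H]$.} The property you want $\zeta$ to have cannot hold for any fixed probability measure once $\beta>0$ and $\rho$ has unbounded support. That property is that $\zeta$ dominates the law of $|\varphi_y|$ given neighbours bounded by $\xi^+$.
\begin{itemize}
\item For $y\in\partial\Lambda$ with a neighbour $z\notin\Lambda$, the value $\xi^+_z=\sqrt{\log|B_{d_G(o,z)}(o)|}$ tends to infinity as $\Lambda\nearrow V$.
\item Taking all neighbours equal to their $\xi^+$ bounds, the conditional law is $\propto e^{\beta h u}\,\mathrm{d}\rho(u)$ with $h\ge\xi^+_z$.
\item As $h\to\infty$, this law puts mass tending to $1$ on $[M,\infty)$ for every $M$.
\end{itemize}
So the good event $F$ does not give $\nu^{\xi^+}_{(\Lambda_2|\partial\Lambda)}\preceq\zeta^{\otimes\partial\Lambda}$. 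Single-site conditional bounds only see immediate neighbours, so they cannot show that the large boundary values lose their influence.

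What is needed is the global statement of Theorem~\ref{thm:product domination}:
\[
\nu^{\xi^+}_{(\Lambda_2|\partial\Lambda)}(|\cdot|)\preceq\kappa(\Lambda_2,\xi^+)|_{\partial\Lambda}+\mu_*^{\otimes\partial\Lambda},\qquad \kappa_x=\sum_{y\in\partial^{\mathrm{ext}}\Lambda_2}|\xi^+_y|\,D^{-2d_G(x,y)}.
\]
Apply it with a large gap between $\partial\Lambda$ and $\partial^{\mathrm{ext}}\Lambda_2$. The paper takes $\Lambda=B_i(o)$ and $\Lambda_2=B_{3i}(o)$, so $\kappa_x=O(\sqrt{i}\,D^{-i})\to0$. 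Then $\mu_*+\kappa_x\preceq\zeta$ follows from how the tails of $\zeta$ are built from $\mu_*$. No good event and no Lemma~\ref{maximal b.c lemma} are needed at this step.

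\textbf{A second step is missing.} Even granting $\nu^{\xi^+}_{(\Lambda_2|\partial\Lambda)}\preceq\zeta^{\otimes\partial\Lambda}$, you must still show that the boundary marginal of $\nu^0_{\Lambda,\beta,\tilde\rho}$ dominates $\zeta^{\otimes\partial\Lambda}$. Under $\nu^0_{\Lambda,\beta,\tilde\rho}$ the boundary spins are not i.i.d.\ $\zeta$; they are tilted by their interactions with each other and with the interior. Your phrase ``the boundary spins under $\tilde\rho$'' skips this.

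The paper obtains $\nu^0_{\partial\Lambda,0,\zeta}\preceq\nu^0_{(\Lambda|\partial\Lambda),\beta,\tilde\rho}$ from Lemma~\ref{Lemma: beta monotonicity}. That lemma proves monotonicity in $\beta$ via FKG and Griffiths, using that $\zeta$ lives on $[0,\infty)$ and $\rho$ is even.

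With both ingredients, the domain Markov property and Proposition~\ref{b.c monotonicity} give $\nu^{\xi^+}_{B_{3i}(o)}[H]\le\nu^0_{B_i(o),\beta,\tilde\rho}[H]$ exactly. Letting $i\to\infty$ yields $\nu^+[H]\le\tilde\nu[H]$, which is the lower bound you need.
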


To prove Proposition \ref{prop: alternative plus measure}, we will need to use monotonicity in $\beta$ of the measures $\nu_{(\Lambda | \partial \Lambda), \beta, \tilde{\rho}}^{0}$, which is provided by the following lemma.
\begin{lemma}
    \label{Lemma: beta monotonicity}
    For any $\Lambda \Subset V$ and $\beta' \geq \beta \geq 0$,
    \begin{equation*}
        \nu_{(\Lambda | \partial \Lambda), \beta, \tilde{\rho}}^{0} \preceq \nu_{(\Lambda | \partial \Lambda), \beta', \tilde{\rho}}^{0}.
    \end{equation*}
\end{lemma}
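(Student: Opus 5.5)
The plan is to pass to the absolute value field and use a Griffiths-type argument on the signs. The point is that $\zeta$ is supported on $[0,\infty)$, so the spins on $\partial\Lambda$ are almost surely non-negative. Hence an increasing function $g$ of $\varphi|_{\partial\Lambda}$ coincides almost surely with an increasing function of the absolute value field $\mathsf a=|\varphi|$ restricted to $\partial\Lambda$. It therefore suffices to show that the law of $\mathsf a$ under $\nu^{0}_{\Lambda,\beta,\tilde\rho}$ is stochastically increasing in $\beta$, at least when tested against increasing functions of $\mathsf a|_{\partial\Lambda}$.

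\textbf{Sign decomposition.} Write $\varphi_x=\sigma_x\mathsf a_x$. Conditionally on $\mathsf a$, the signs $\sigma$ are distributed as an Ising model on $\Lambda$ with ferromagnetic couplings $\beta\mathsf a_x\mathsf a_y$ for $xy\in E$. Since the boundary spins are non-negative, $\sigma_x=+1$ for every $x\in\partial\Lambda$ (on $\{\mathsf a_x>0\}$), so this is an Ising model on $\Lambda\setminus\partial\Lambda$ with plus boundary condition. The law of $\mathsf a$ then has density, with respect to $\prod_{x\in\Lambda\setminus\partial\Lambda}\mathrm d\rho_+(\mathsf a_x)\prod_{x\in\partial\Lambda}\mathrm d\zeta(\mathsf a_x)$, proportional to $Z^{+}_{\mathrm{Is}}(\beta\mathsf a)$. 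Here $Z^{+}_{\mathrm{Is}}$ is the plus-boundary Ising partition function, with the constant factor from the even measure $\rho$ absorbed.

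\textbf{Monotone ratio.} For $\beta'\ge\beta$ write
\[
\log\frac{Z^{+}_{\mathrm{Is}}(\beta'\mathsf a)}{Z^{+}_{\mathrm{Is}}(\beta\mathsf a)}=\int_\beta^{\beta'}\sum_{xy\in E}\mathsf a_x\mathsf a_y\,\langle\sigma_x\sigma_y\rangle^{+}_{s\mathsf a}\,\mathrm ds .
\]
By Griffiths' first inequality, $\langle\sigma_x\sigma_y\rangle^{+}\ge0$. By Griffiths' second inequality, it is non-decreasing in every coupling $s\mathsf a_u\mathsf a_v$, hence in every $\mathsf a_z$. Since $\mathsf a_x\mathsf a_y\ge0$ is also non-decreasing, the integrand is increasing in $\mathsf a$. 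Thus the density ratio $F=\mathrm d\nu_{\beta'}(\mathsf a)/\mathrm d\nu_{\beta}(\mathsf a)$ is an increasing function of $\mathsf a$.

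\textbf{FKG and conclusion.} I then need the FKG inequality for the law of $\mathsf a$ under $\nu^{0}_{\Lambda,\beta,\tilde\rho}$. Given FKG, $\langle gF\rangle_\beta\ge\langle g\rangle_\beta\langle F\rangle_\beta$, i.e.\ $\langle g\rangle_{\beta'}\ge\langle g\rangle_\beta$, which is the claim. This is the step I expect to be the main obstacle. The absolute value FKG inequality of \cite[Corollary 6.4]{LammersOtt2021}, used in Proposition~\ref{b.c monotonicity}(ii), is stated for even single-site measures, whereas here $\zeta$ is one-sided.

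To get around this, I would first replace $\zeta$ on $\partial\Lambda$ by its symmetrisation $\zeta^{s}$, with $\mathrm d\zeta^{s}(u)=\tfrac12(\mathrm d\zeta(u)+\mathrm d\zeta(-u))$, and then condition on all boundary signs being $+$. Conditioning the symmetrised model on its boundary signs does not change its absolute-value law. So the absolute-value law of $\nu^{0}_{\Lambda,\beta,\tilde\rho}$ equals that of the symmetrised model conditioned on the boundary signs being $+$. This is the same structure as the plus-boundary Ising setting behind \cite{LammersOtt2021}. I would check that their proof, namely log-supermodularity of $\mathsf a\mapsto Z^{+}_{\mathrm{Is}}(\beta\mathsf a)$ via the Ginibre/duplicated-variable argument, goes through with plus boundary conditions.

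If that check is not straightforward, the fallback is to prove the needed lattice condition (log-supermodularity of $\mathsf a\mapsto Z^{+}_{\mathrm{Is}}(\beta\mathsf a)$) directly, viewing the plus boundary as external fields $\beta\mathsf a_x\mathsf a_y$ on the interior neighbours of $\partial\Lambda$. Alternatively, one can invoke the monotonicity of the absolute value field in $J$ from \cite[Proposition 4.10]{well_behaved}, as in Lemma~\ref{J monotonicity Lemma}, noting that increasing $\beta$ is the same as multiplying $J$ by $\beta'/\beta$.
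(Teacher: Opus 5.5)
Your strategy (sign decomposition, a monotone likelihood ratio $F$ via Griffiths, then FKG for the law of $\mathsf a$) is genuinely different from the paper's and can be completed. As written, however, the FKG step, which you yourself flag, is not proved. The symmetrisation you propose for it also rests on a false claim: conditioning the symmetrised model on its boundary signs \emph{does} change the absolute-value law.

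To see this, take a single interior vertex $x$ adjacent to two non-adjacent boundary vertices $y,z$. The symmetrised model gives $\mathsf a$ a density proportional to $\cosh(\beta\mathsf a_x\mathsf a_y)\cosh(\beta\mathsf a_x\mathsf a_z)$. Conditioning on $\sigma_y=\sigma_z=+1$ gives a density proportional to $\cosh(\beta\mathsf a_x(\mathsf a_y+\mathsf a_z))$, with respect to the same reference measure, and the ratio of the two is not constant. So FKG cannot be transferred from the even model. You really need the lattice condition for $\mathsf a\mapsto Z^{+}_{\mathrm{Is}}(\beta\mathsf a)$ on $[0,\infty)^{\Lambda}$.

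That condition follows from the same inequalities you already use for $F$. For $u\neq v$,
\[
\partial_{\mathsf a_u}\partial_{\mathsf a_v}\log Z^{+}_{\mathrm{Is}}(\beta\mathsf a)=\beta\,\mathbf{1}_{\{u\sim v\}}\langle\sigma_u\sigma_v\rangle^{+}_{\beta\mathsf a}+\beta^{2}\sum_{w\sim u}\sum_{t\sim v}\mathsf a_w\mathsf a_t\bigl(\langle\sigma_u\sigma_w\sigma_v\sigma_t\rangle^{+}_{\beta\mathsf a}-\langle\sigma_u\sigma_w\rangle^{+}_{\beta\mathsf a}\langle\sigma_v\sigma_t\rangle^{+}_{\beta\mathsf a}\bigr).
\]
Here $\sigma\equiv+1$ on $\partial\Lambda$ acts as non-negative fields, so both terms are non-negative by Griffiths' first and second inequalities. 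Hence $\log Z^{+}_{\mathrm{Is}}(\beta\,\cdot)$ is supermodular on $[0,\infty)^{\Lambda}$. This is the FKG lattice condition with respect to the product measure $\prod_{x\in\Lambda\setminus\partial\Lambda}\rho_+\prod_{x\in\partial\Lambda}\zeta$. Your conclusion $\langle gF\rangle_\beta\ge\langle g\rangle_\beta\langle F\rangle_\beta=\langle g\rangle_\beta$ then goes through.

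The paper never passes to absolute values. It differentiates $\nu^{0}_{(\Lambda|\partial\Lambda),\beta,\tilde\rho}[A]$ in $\beta$, which reduces the claim to $\langle\varphi_x\varphi_y\mid A\rangle\ge\langle\varphi_x\varphi_y\rangle$ for every edge, and checks this edge type by edge type.
\begin{itemize}
\item Boundary--boundary edges are handled by FKG for the signed field. That FKG holds for arbitrary single-site measures, so the one-sidedness of $\zeta$ causes no trouble.
\item Edges touching $\Lambda\setminus\partial\Lambda$ are handled by conditioning on $\eta=\varphi|_{\partial\Lambda}\ge 0$. One then uses that $\langle\varphi_x\varphi_y\rangle^{\eta}$ and $\eta_y\langle\varphi_x\rangle^{\eta}$ are increasing in $\eta$ (by Griffiths), together with $\nu[\,\cdot\mid A]\succeq\nu$.
\end{itemize}
That route needs only standard FKG and no absolute-value FKG. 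Yours is more global and, once completed, proves more. Since $F$ is increasing in all coordinates of $\mathsf a$ and FKG holds on all of $\Lambda$, it shows that the whole absolute-value field $|\varphi|$ on $\Lambda$ is stochastically increasing in $\beta$, not only its restriction to $\partial\Lambda$.
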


\begin{proof}
    Let $A$ be an increasing event depending only on vertices in $\partial \Lambda$. Without loss of generality, assume that $\nu_{(\Lambda | \partial \Lambda), \beta, \tilde{\rho}}^{0}[A] > 0$. Differentiating with respect to $\beta$ we obtain
    \begin{equation*}
        \frac{\mathrm{d} \nu_{(\Lambda | \partial \Lambda), \beta, \tilde{\rho}}^{0}[A]}{\mathrm{d} \beta} = \sum_{xy \in E} \langle \mathbbm{1}_{A} \varphi_x \varphi_y \rangle_{\Lambda, \beta, \tilde{\rho}}^{0} - \langle \mathbbm{1}_{A} \rangle_{\Lambda, \beta, \tilde{\rho}}^{0} \langle \varphi_x \varphi_y \rangle_{\Lambda, \beta, \tilde{\rho}}^{0},
    \end{equation*}
    so it suffices to show that $\langle \varphi_x \varphi_y \mid A \rangle_{\Lambda, \beta, \tilde{\rho}}^{0} - \langle \varphi_x \varphi_y \rangle_{\Lambda, \beta, \tilde{\rho}}^{0} \geq 0$
    for all $xy \in E$.
    
    If $x, y \in \partial \Lambda$ then $\varphi_x, \varphi_y \geq 0$, so $\varphi_x\varphi_y$ is an increasing function and the desired inequality follows from the FKG inequality.
    Now suppose that $x, y \in \Lambda \setminus \partial \Lambda$. We use the domain Markov property and the fact that $\tilde{\rho} = \rho$ on $\Lambda \setminus \partial \Lambda$ to get
    \begin{equation*}
        \langle \varphi_x \varphi_y \mid A \rangle_{\Lambda, \beta, \tilde{\rho}}^{0}
        = \int_{\eta \in (\mathbb{R}^{+})^{\partial \Lambda}} \langle \varphi_x \varphi_y \rangle_{\Lambda \setminus \partial \Lambda, \beta, \rho}^{\eta} \mathrm{d} \nu_{(\Lambda | \partial \Lambda), \beta, \tilde{\rho}}^{0} [\eta \mid A].
    \end{equation*}
    Since $\nu_{(\Lambda | \partial \Lambda), \beta, \tilde{\rho}}^{0} [\ \cdot \mid A] \succeq \nu_{(\Lambda | \partial \Lambda), \beta, \tilde{\rho}}^{0} [\ \cdot \ ]$ by the FKG inequality, and $\langle \varphi_x \varphi_y \rangle_{\Lambda \setminus \partial \Lambda, \beta, \rho}^{\eta}$ is an increasing function of $\eta$ (which follows by differentiating and using Griffiths' inequality), the right hand side above is at least $\langle \varphi_x \varphi_y \rangle_{\Lambda, \beta, \tilde{\rho}}^{0}$.
    
    It remains to consider the case when the edge $xy$ has one endpoint in $\partial \Lambda$ and the other endpoint in $\Lambda \setminus \partial \Lambda$. If $x \in \Lambda \setminus \partial \Lambda$ and $y \in \partial \Lambda$, then
    \begin{equation*}
        \langle \varphi_x \varphi_y \mid A \rangle_{\Lambda, \beta, \tilde{\rho}}^{0}
        = \int_{\eta \in (\mathbb{R}^{+})^{\partial \Lambda}} \eta_y \langle \varphi_x  \rangle_{\Lambda \setminus \partial \Lambda, \beta, \rho}^{\eta} \mathrm{d} \nu_{(\Lambda | \partial \Lambda), \beta, \tilde{\rho}}^{0} [\eta \mid A].
    \end{equation*}
    Proposition \ref{b.c monotonicity} together with the fact that the boundary conditions $\eta$ are positive and $\rho$ is an even measure implies that $\eta_y \langle \varphi_x  \rangle_{\Lambda \setminus \partial \Lambda, \beta, \rho}^{\eta}$ is an increasing function of $\eta$, so the right hand side is again at least $\langle \varphi_x \varphi_y \rangle_{\Lambda, \beta, \tilde{\rho}}^{0}$.
\end{proof}

As a corollary, we obtain the following stochastic domination.
\begin{corollary}
    \label{cor: alternative measures}
    For any $\Lambda \Subset V$ and $\beta \geq 0$, \[\tilde{\nu}^0_{\Lambda,\beta,\rho} \preceq \nu^0_{\Lambda,\beta,\tilde{\rho}} \quad \text{and} \quad \tilde{\nu}^0_{\Lambda,\beta,\rho}(|\cdot|) \preceq \nu^0_{\Lambda,\beta,\tilde{\rho}} (|\cdot|).\]
\end{corollary}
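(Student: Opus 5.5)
The plan is to write both measures as mixtures over the boundary configuration on $\partial\Lambda$ and compare them in two steps. First, compare the boundary marginals using Lemma~\ref{Lemma: beta monotonicity}. Second, compare the conditional laws in the interior using Proposition~\ref{b.c monotonicity}.

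\emph{Step 1 (boundary marginals).} At $\beta=0$ the measure $\nu^0_{\Lambda,0,\tilde\rho}$ is a product measure, so its restriction to $\partial\Lambda$ is exactly $\nu^0_{\partial\Lambda,0,\zeta}$. This is the law of the random boundary condition in $\tilde\nu^0_{\Lambda,\beta,\rho}$. Applying Lemma~\ref{Lemma: beta monotonicity} with $0\le\beta$ then gives
\[
\nu^0_{\partial\Lambda,0,\zeta}=\nu^0_{(\Lambda|\partial\Lambda),0,\tilde\rho}\preceq \nu^0_{(\Lambda|\partial\Lambda),\beta,\tilde\rho}.
\]

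\emph{Step 2 (interior conditional laws).} By the domain Markov property, and since the boundary condition outside $\Lambda$ is $0$, the conditional law of $\nu^0_{\Lambda,\beta,\tilde\rho}$ on $\Lambda\setminus\partial\Lambda$ given $\varphi|_{\partial\Lambda}=\xi$ is $\langle\cdot\rangle^{\xi}_{\Lambda\setminus\partial\Lambda,\beta,\rho}$. Here we use that $\tilde\rho=\rho$ off $\partial\Lambda$. Hence
\[
\nu^0_{\Lambda,\beta,\tilde\rho}[g]=\int \langle g^\xi\rangle^{\xi}_{\Lambda\setminus\partial\Lambda,\beta,\rho}\,\mathrm{d}\nu^0_{(\Lambda|\partial\Lambda),\beta,\tilde\rho}(\xi).
\]
This is the same formula as $\tilde\nu^0_{\Lambda,\beta,\rho}[g]$, except that the outer measure there is $\nu^0_{\partial\Lambda,0,\zeta}$.

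\emph{Step 3 (monotonicity of the mixing function).} Given Steps 1 and 2, it suffices to show that $\Phi_g(\xi):=\langle g^\xi\rangle^{\xi}_{\Lambda\setminus\partial\Lambda,\beta,\rho}$ is increasing in $\xi$ for every increasing $g$. Take $\xi\le\xi'$. Then $g^\xi\le g^{\xi'}$ pointwise, because $g$ is increasing. The nearest-neighbour interactions are ferromagnetic, so Proposition~\ref{b.c monotonicity}(i) gives $\langle h\rangle^{\xi}\le\langle h\rangle^{\xi'}$ for the increasing function $h=g^{\xi'}$. Combining these,
\[
\langle g^\xi\rangle^{\xi}\le\langle g^{\xi'}\rangle^{\xi}\le\langle g^{\xi'}\rangle^{\xi'},
\]
so $\Phi_g$ is increasing. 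Integrating against the ordered boundary laws of Step 1 yields $\tilde\nu^0_{\Lambda,\beta,\rho}[g]\le\nu^0_{\Lambda,\beta,\tilde\rho}[g]$, which is the first claim.

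\emph{Step 4 (absolute values).} For the absolute value statement, repeat the argument with $g(|\cdot|)$ for $g$ increasing. Since $\zeta$ is supported on $[0,\infty)$, the boundary spins are non-negative in both measures, so $|\xi|=\xi$ on $\partial\Lambda$. The function $\xi\mapsto\langle g(|\cdot|)^\xi\rangle^\xi$ is then increasing on $[0,\infty)^{\partial\Lambda}$ by the same two-inequality chain. In the second inequality we now use Proposition~\ref{b.c monotonicity}(ii), which applies because $\rho$ is even and $|\xi|\le\xi'$.

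The only step I expect to need care is checking that the $\beta=0$ boundary marginal is exactly the product $\zeta$-measure. At $\beta=0$ all couplings vanish, including those between pairs of boundary vertices, so this holds. I would also check that the hypotheses of Lemma~\ref{Lemma: beta monotonicity} are met, in particular that the spins on $\partial\Lambda$ are non-negative, which holds since $\zeta$ lives on $[0,\infty)$. Beyond that, the argument is a direct application of the earlier monotonicity results.
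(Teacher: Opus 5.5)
Your proposal is correct and follows the paper's proof. The paper compares the boundary marginals on $\partial\Lambda$ via Lemma~\ref{Lemma: beta monotonicity}, using that the $\beta=0$ marginal is $\nu^0_{\partial\Lambda,0,\zeta}$, and then extends the domination to all of $\Lambda$ by the domain Markov property and Proposition~\ref{b.c monotonicity}. Your Steps 3--4 just make explicit the monotonicity of the mixing function, including the use of part (ii) with the non-negative $\zeta$-distributed boundary spins for the absolute value statement, which the paper leaves implicit.
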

\begin{proof}
    By Lemma \ref{Lemma: beta monotonicity}, we have $\tilde{\nu}_{(\Lambda | \partial \Lambda), 0, \rho}^{0} = \nu_{(\Lambda | \partial \Lambda), 0, \tilde{\rho}}^{0} \preceq \nu_{(\Lambda | \partial \Lambda), \beta, \tilde{\rho}}^{0}$, and stochastic domination in the whole of $\Lambda$ follows from the domain Markov property and monotonicity in boundary conditions.
\end{proof}

We now show that for appropriately defined $\zeta$, the measures $\tilde{\nu}^0_{\Lambda,\beta,\rho}$ and $\nu^0_{\Lambda,\beta,\tilde{\rho}}$ satisfy monotonicity in volume with gaps.

\begin{proposition}
\label{prop: Lambda monotonicity}
Let $G=(V,E)$ be a graph of bounded degree. For every $\beta_0>0$ there exists a probability measure $\zeta$ on $[0,\infty)$ such that 
\[
\zeta(e^{\lambda \varphi^2})<\infty \qquad \forall \lambda>0
\]
and $r>0$ such that the following holds. Consider $\Lambda'\subset \Lambda \Subset V$ such that $d_G(\partial \Lambda, \Lambda')>r$. Then for every $\beta\in [0,\beta_0]$, $\tilde{\nu}^0_{\Lambda',\beta,\rho}$ stochastically dominates $\nu^0_{(\Lambda | \Lambda'),\beta, \tilde{\rho}}$ and $\tilde{\nu}^0_{\Lambda',\beta,\rho}(|\cdot|)$ stochastically dominates $\nu^0_{(\Lambda | \Lambda'),\beta, \tilde{\rho}}(|\cdot|)$. In particular, we have $\tilde{\nu}^0_{\Lambda',\beta,\rho} \succeq \tilde{\nu}^0_{(\Lambda | \Lambda'),\beta,\rho}$ and $\nu^0_{\Lambda',\beta,\tilde{\rho}}\succeq \nu^0_{(\Lambda | \Lambda'),\beta,\tilde{\rho}}$ as well as the corresponding stochastic dominations for the absolute value field. 
\end{proposition}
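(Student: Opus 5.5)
We will define $\zeta$ from the product measure of Theorem~\ref{thm:product domination}: take $\zeta=\mu_*^{(r)}$, the law of $K+Y$ with $Y\sim\mu_*$ and a constant $K$ fixed below, so $\zeta$ has all Gaussian moments. Both claims reduce to one comparison. Under $\nu^0_{\Lambda,\beta,\tilde\rho}$, condition on the spins of $\Lambda\setminus\Lambda'$ outside the annulus. We want to show that the induced law on $\Lambda'$ is dominated by $\tilde\nu^0_{\Lambda',\beta,\rho}$, which has independent boundary spins on $\partial\Lambda'$ with law $\zeta$ and the model with $\rho$ inside $\Lambda'\setminus\partial\Lambda'$. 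By the domain Markov property the two measures agree on $\Lambda'\setminus\partial\Lambda'$ given the spins on $\partial\Lambda'$. Proposition~\ref{b.c monotonicity} (both parts, using that $\rho$ is even for the absolute-value version) then says that increasing the spins on $\partial\Lambda'$ increases the law inside. It therefore suffices to show that the marginal of $\nu^0_{\Lambda,\beta,\tilde\rho}$ on $\partial\Lambda'$ is stochastically dominated, in absolute value, by $\zeta^{\otimes\partial\Lambda'}$. The signed statement follows too: replacing the $\partial\Lambda'$ spins by their absolute values only increases them, and $\zeta$ lives on $[0,\infty)$.

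To dominate the marginal on $\partial\Lambda'$, first condition on the spins on $\partial\Lambda$, which are i.i.d.\ $\zeta$ under $\tilde\rho$ before tilting; by Lemma~\ref{Lemma: beta monotonicity} they are tilted upward but remain controlled. Given these spins $\xi$, the remaining field is $\nu^{\xi}_{\Lambda\setminus\partial\Lambda,\beta,\rho}$. Theorem~\ref{thm:product domination} then gives $|\varphi|\preceq\kappa+\mu_*^{\otimes}$ with $\kappa_x=\sum_{y}|\xi_y|D^{-2d_G(x,y)}$. For $x\in\partial\Lambda'$, at distance greater than $r$ from $\partial\Lambda$, the term $\kappa_x$ is a sum of random variables carrying weights at most $D^{-2r}$ per distance shell. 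The number of $y$ at distance $k$ is at most $D^k$, so the total weight is at most $\sum_{k>r}D^{-k}$, which is small.

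The main obstacle is that $\kappa$ is random and correlated across $x\in\partial\Lambda'$, and it is also added to the product part. I would first handle the boundary spins $\xi$ themselves: under $\nu^0_{\Lambda,\beta,\tilde\rho}$ they are not i.i.d.\ $\zeta$. I would bound their law by a product measure of the same type, via the Glauber-coupling argument of Theorem~\ref{thm:product domination} run on all of $\Lambda$ with vertex-dependent single-site measure (Remark~\ref{remark:changed single-site}), giving $|\xi|\preceq\hat\mu^{\otimes}$ for a fixed $\hat\mu$ with all Gaussian moments. Then $\kappa_x$ is a linear combination with small summable coefficients $c_{x,y}$ of such variables. Reusing the $\sqrt{c}$-splitting trick from the proof of Theorem~\ref{thm:product domination} (inequality~\eqref{eq:splitting property} and the union bound with $S_x$) shows that the field $(\kappa_x)$ is dominated by a product measure. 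Its one-site law $\hat\mu'$ has tails $e^{-t^2L(t)/c}$, improved by a large factor once $r$ is large.

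Finally the sum $\kappa+\mu_*^{\otimes}$ of two independent dominated product fields is dominated by a product of convolutions $\hat\mu'*\mu_*$. One must check the self-consistency that this convolution is dominated by $\zeta$ itself, since $\zeta$ appears in the definition of $\tilde\rho$. This fixed-point issue is where I expect difficulty. I would resolve it by choosing $\zeta$ with tail function $e^{-t^2L_3(t)}$, $L_3\le L_2/4$, and taking $r$ large so that the contribution of $\kappa$ (with coefficients at most $D^{-r}$) has tails dominated by $e^{-t^2L_2(t)}$ whenever $\zeta$ has tails of that form. Then $\mu_*$ plus a small perturbation fits under $\zeta$ after the shift $K$. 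The ``in particular'' statements follow by combining with Corollary~\ref{cor: alternative measures}.
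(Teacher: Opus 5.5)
Your reduction matches the paper's. It suffices to show $\nu^0_{(\Lambda|\partial\Lambda'),\beta,\tilde\rho}(|\cdot|)\preceq\zeta^{\otimes\partial\Lambda'}$. The domain Markov property, Proposition~\ref{b.c monotonicity} and Corollary~\ref{cor: alternative measures} then give everything else.

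Your way of breaking the circularity is also the paper's. You fix $\zeta$ from $\mu_*$ alone, with a weaker tail plus a floor below which $\zeta([t,\infty))=1$. Only afterwards do you choose $r$, so that the $\zeta$-dependent boundary contribution is killed by its small weight.

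The difference is in the middle. The paper never conditions on $\partial\Lambda$ and never uses $\kappa$. It runs the single Glauber coupling for $\nu^0_{\Lambda,\beta,\tilde\rho}$ on all of $\Lambda$ that you already invoke to bound the $\partial\Lambda$ spins, with $Y_n\sim\pi_a$ at interior updates and $Z_n\sim\zeta_a$ at updates in $\partial\Lambda$. It then splits $V^{(n)}_y=\sum_m c^{(n,m)}_y(\mathbbm 1_{x_m\notin\partial\Lambda}Y_m+\mathbbm 1_{x_m\in\partial\Lambda}Z_m)$ according to where the source $x_m$ lies.
\begin{itemize}
\item The interior-sourced part is $\preceq\mu_*^{\otimes}$, exactly as in Theorem~\ref{thm:product domination}.
\item The boundary-sourced part is $\preceq(s_xE_x)_x$, with $E_x\sim\zeta_*$ independent and $s_x=\sum_m(c^{(n,m)}_x)^{1/2}S_x^2\mathbbm 1_{x_m\in\partial\Lambda}$. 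By Lemma~\ref{lem: c_y q bound} applied to $\mathbbm 1_{\partial\Lambda}$, $s_x\to0$ uniformly as $d_G(x,\partial\Lambda)\to\infty$.
\end{itemize}
So once that coupling is set up, the proof is essentially finished. The two-stage detour through Theorem~\ref{thm:product domination} as a black box buys nothing.

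Worse, the detour introduces a step that does not work as you describe it. You need the correlated field $\kappa_x=\sum_y c_{x,y}X_y$, with $X_y$ i.i.d.\ $\hat\mu$, to be dominated by a product measure. You propose to reuse the $\sqrt c$-splitting and the union bound with $S_x$.

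Those estimates need sums such as $\ell_y=\sum_x c_{x,y}^{1/2}$ and $\sum_y c_{x,y}^{1/4}$ to be bounded uniformly in $\Lambda$; this is how \eqref{eq: c root bound} is obtained. In the paper this holds because $c^{(n,m)}_x\le\sum_{k\ge d_G(x_m,x)}(\varepsilon D)^k$, with $\varepsilon$ as small as one likes.

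In $\kappa$ the coefficients are frozen at $c_{x,y}=D^{-2d_G(x,y)}$. On graphs of exponential growth the required sums are then unbounded. For example, on a $D$-regular tree with $D\ge3$ and $\Lambda$ a ball of radius $R$ around $x$, $\sum_{y\in\partial\Lambda}c_{x,y}^{1/4}\asymp(D-1)^RD^{-R/2}\to\infty$. With your count of $D^k$ vertices per shell you cannot even bound $\ell_y$.

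The smallness of the total weight $\sum_y c_{x,y}$, which you establish, is not what the splitting argument uses. The step is repairable. You could extract from the proof of Theorem~\ref{thm:product domination} a $\kappa$ with coefficients $\lesssim(\varepsilon D)^{d_G(x,y)}$ for small $\varepsilon$, or re-weight the union bound with exponents close to $1$. But it is genuine extra work, and the simplest repair is the paper's one-stage split.

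A smaller point concerns the definition of $\zeta$. Your initial choice $\zeta=\delta_K*\mu_*$ cannot work, because a deterministic shift cannot absorb an unbounded independent summand in a tail comparison. Your later version is the right kind of object. However, the pointwise condition $L_3\le L_2/4$ does not absorb $\mathbb P(T\ge t/2)=e^{-t^2L_2(t/2)/4}$ when $L_2$ grows. The weakening must also rescale the argument, as in the paper's choice $L(t)=L_2(t/2)/8$ for $t\ge B$.
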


\begin{proof}
Let $\zeta$ be a probability measure on $[0,\infty)$ defined by its tails: for every $t\geq 0$, $\zeta([t,\infty))=e^{-t^2 L(t)}$ for some non-decreasing continuous function $L:[0,\infty)\to[0,\infty)$ to be defined. It suffices to find $r>0$ such that for every $\Lambda'\subset \Lambda \Subset V$ with $d_G(\partial \Lambda, \Lambda')>r$, 
$\nu_{\partial \Lambda', 0, \zeta}^{0} =\tilde{\nu}^0_{(\Lambda' | \partial \Lambda'),\beta,\rho}$ stochastically dominates $\nu^0_{(\Lambda | \partial \Lambda'),\beta,\tilde{\rho}}(|\cdot|)$. Then the desired stochastic dominations in the whole of $\Lambda'$ follow from the domain Markov property and monotonicity in boundary conditions. By Corollary \ref{cor: alternative measures}, we also get $\tilde{\nu}^0_{\Lambda',\beta,\rho} \succeq \tilde{\nu}^0_{(\Lambda | \Lambda'),\beta,\rho}$ and $\nu^0_{\Lambda',\beta,\tilde{\rho}} \succeq \nu^0_{(\Lambda | \Lambda'),\beta,\tilde{\rho}}$ together with the corresponding statements for the absolute value field.

To this end, we argue as in the proof of Theorem~\ref{thm:product domination}, and we only highlight the necessary modifications. Let $\varepsilon > 0$ and set $a = \beta/\varepsilon$. Let $\zeta_a$ be defined from $\zeta$ by 
\[
\zeta_a([t,\infty))
\coloneq \min\left\{1,\int_t^{\infty} e^{a s^2}\,\mathrm{d}\zeta(s)\right\}.
\] 
Let $(x_n)_{n\geq 1}$ be a sequence of i.i.d.\ vertices of $\Lambda$ chosen uniformly at random, and for $y \in \Lambda$ define $c^{(n,m)}_y, \ell^{(n,m)},$ and $S_y$ as in the proof of Theorem \ref{thm:product domination}. We define $\varphi^{(n)}, V^{(n)} \in \mathbb{R}^{\Lambda}$ for every $n \geq 0$ as follows. For $n=0$, we let $\varphi^{(0)}=V^{(0)}=0$. For $n\geq 1$, if $y\neq x_n$ we set $\varphi^{(n)}_y=\varphi^{(n-1)}_y$, and if $y=x_n$, 
we sample $\varphi^{(n)}_y \sim \nu_{\Lambda,\beta,\tilde{\rho}}^{0}( \ \cdot \mid \varphi_z=\varphi^{(n-1)}_z \, \forall \, z\in \Lambda \setminus \{y\})$, $Y_n\sim \pi_a$ and $Z_n\sim \zeta_a$ from a monotone coupling such that almost surely, 
\[
|\varphi^{(n)}_y|\leq \varepsilon\sum_{\substack{z\in\Lambda\\z\sim y}}|\varphi^{(n-1)}_z|+\mathbbm{1}_{y \in \Lambda \setminus \partial \Lambda}Y_n+\mathbbm{1}_{y \in \partial \Lambda} Z_n,
\]
with $Y_n$ and $Z_n$ independent from each other, from the past and from $(x_n)_{n\geq 1}$. Then we set
\[
V^{(n)}_y= \begin{cases}\mathbbm{1}_{y \in \Lambda \setminus \partial \Lambda}Y_n+\mathbbm{1}_{y \in \partial \Lambda} Z_n+\varepsilon\sum_{z\sim y}V^{(n-1)}_z & \text{if } y = x_n,\\
V^{(n-1)}_y & \text{if } y \neq x_n.
\end{cases}
\]
We have $|\varphi^{(n)}| \leq V^{(n)}$ almost surely, and decomposing as in Lemma \ref{lem: V^n compact form} we get
\[
V_y^{(n)}=\sum_{m=1}^n
c_y^{(n,m)}\mathbbm{1}_{x_m\in \Lambda \setminus \partial \Lambda}Y_m+
\sum_{m=1}^n
c_y^{(n,m)}\mathbbm{1}_{x_m\in \partial\Lambda}Z_m.
\]

To handle the first sum in the above expression, note that $\sum_{m=1}^n
c_y^{(n,m)}\mathbbm{1}_{x_m\in \Lambda \setminus \partial \Lambda}Y_m\leq \sum_{m=1}^n
c_y^{(n,m)}Y_m$, and that in the proof of Theorem~\ref{thm:product domination} we showed that the latter is stochastically dominated by the product of some probability measures $\mu_*$ defined by 
$\mu_*([t,\infty))=e^{-t^2 L_2(t)}$ for some non-decreasing continuous function $L_2:[0,\infty)\to[0,\infty)$. Now let $B>0$ be such that 
\begin{equation}\label{eq: L def}
\exp(-t^2L_2(t/2)/8)\leq 1/2 \quad \forall t\geq B.
\end{equation}
Define $L(t)=L_2(t/2)/8$ for every $t\geq B$ and
$L(t)=0$ for every $t<B$. This fixes the definition of the measure $\zeta$.

By arguing as in the proof of Theorem~\ref{thm:product domination}, the second term $\sum_{m=1}^n
c_y^{(n,m)}\mathbbm{1}_{x_m\in \partial\Lambda}Z_m$ is stochastically dominated by
\[
\sum_{m=1}^n \ell^{(n,m)}\sqrt{c_y^{(n,m)}} \mathbbm{1}_{x_m\in \partial\Lambda}\,Z_y^{(n,m)},
\]
where $(Z_x^{(n,m)})_{x\in\Lambda,\ 1\leq m\leq n}$ are independent random variables with law $\zeta_a$.
To estimate the latter field, 
we again follow the proof of Theorem \ref{thm:product domination} and deduce that for all $\varepsilon$ small enough, there exists a probability measure $\zeta_*$ such that conditionally on $(x_n)_{n \geq 1}$, $(\sum_{m=1}^n
c_x^{(n,m)}\mathbbm{1}_{x_m\in \partial\Lambda}Z_m)_{x \in \Lambda}$ is stochastically dominated by independent random variables $(s_x E_x)_{x\in \Lambda}$ with $E_x\sim \zeta_*$ and $s_x = \sum_{m=1}^{n} (c^{(n, m)}_x)^{1/2} S_x^2 \mathbbm{1}_{x_m \in \partial \Lambda}$.
Thanks to Lemma~\ref{lem: c_y q bound} applied to $\xi_x=\mathbbm{1}_{x\in \partial \Lambda}$, we get that $\sup_{d_G(x,\partial \Lambda)\geq r} s_x$ tends to $0$ as $r\to\infty$, uniformly over $\Lambda$ and $(x_n)_{n \geq 1}$. Furthermore, $\zeta_*$ can be defined by its tails $\zeta_*([t,\infty))=e^{-t^2 L_3(t)}$ for some non-decreasing continuous function $L_3:[0,\infty)\to[0,\infty)$. By \eqref{eq: L_1 def} and \eqref{eq: L_2 def}, this function $L_3$ can be defined in terms of $L$ so that it satisfies $L_3(t)=L(t/2)/16$ for every $t\geq C$ for some constant $C>0$ large enough (that depends on the choice of $L(t)$ but this will not matter). 

With the above at hand, let $T_x$ be a random variable with law $\mu_*$ which is independent of $E_x$. Note that for every $t\geq B$ and every $x\in \Lambda$ such that $s_x\leq 1/16$ and $B/(2s_x)\geq C$, by a union bound, \eqref{eq: L def} and the expressions of $L_2$ and $L_3$ in terms of $L$,
\begin{align*}
\mathbb{P}(T_x+s_xE_x\geq t)&
\leq \mathbb{P}(T_x\geq t/2)+\mathbb{P}(s_xE_x\geq t/2)\\
&\leq \exp\left(-\tfrac{t^2}{4} L_2(t/2)\right) +\exp\left(-\tfrac{t^2}{4 s^2_x} L_3\left(\tfrac{t}{2s_x}\right)\right)\\
&\leq \exp\left(-2 t^2 L(t)\right)+\exp\left(-2t^2 L\left(\tfrac{t}{4s_x}\right)\right)\\
&\leq 2\exp\left(-2t^2 L(t)\right)\\
&\leq \exp\left(-t^2 L(t)\right)\\
&=\zeta([t,\infty)).
\end{align*}
Since the latter inequality holds trivially for $t< B$ because
$\zeta([t,\infty))=1$ in this case, the desired stochastic domination follows.
\end{proof}

\begin{remark}
For every $C>0$, the same argument shows that one can construct a measure $\zeta=\zeta(C)$ supported on $[C,\infty)$ with all Gaussian moments such that the monotonicity of Proposition~\ref{prop: Lambda monotonicity} with gaps holds for some $r$ that depends on $C$ (and the other parameters of the proposition).
\end{remark}

\begin{remark}
In the case of $P(\varphi)$ models, where $\rho$ has density with respect to Lebesgue measure equal to $e^{-P(\varphi)}$ for some even polynomial $P$ of degree $n\geq 4$ and of leading coefficient $a_n>0$, all the functions $L_i$ and $L$ appearing in the proof of Proposition~\ref{prop: Lambda monotonicity} can be chosen to be polynomials of degree $n-2$. By tuning the constants involved, $\zeta$ can be chosen to be of the form $\pi(C+|\cdot|)$ for some constant $C>0$, where $\pi$ has density $e^{-\tfrac{a_n}{2}\varphi^n}\mathrm{d}\varphi$.
\end{remark}

We now prove convergence of the measures $\nu_{\Lambda, \beta, \tilde{\rho}}^{0}$ and $\tilde{\nu}^0_{\Lambda,\beta,\rho}$ to the plus measure.
\begin{proof}[Proof of Proposition \ref{prop: alternative plus measure}]
    First note that the single-site measures $\tilde{\rho}_{x, \Lambda}$ satisfy the assumptions of Remark \ref{remark:changed single-site} with $a_{\min} = \frac{a}{2}$, 
    and that the result of Theorem \ref{Theorem: n=2} still applies in this case.
    Therefore, there exists $\tilde{C}_2 > 0$ such that for all $\Lambda \Subset V$, $\Lambda' \subset \Lambda \setminus \partial \Lambda$ and $\psi \in \mathbb{R}^{\Lambda'}$,
    \begin{equation}
        \label{eq:regularity for rho tilde n=2}
        \mathrm{d} \nu_{\Lambda, \beta, \tilde{\rho}}^{0}[\varphi|_{\Lambda'} = \psi]
        \leq e^{\tilde{C}_2 |\Lambda'|} \mathrm{d} \nu_{\Lambda', 0, \rho_{\frac{a}{2}}}^{0} [\psi].
    \end{equation}
    By the monotonicity in volume of Proposition \ref{prop: Lambda monotonicity}, $\nu_{\Lambda, \beta, \tilde{\rho}}^{0}$ converges to some measure $\tilde{\nu}$ as $\Lambda \nearrow V$. The measure $\tilde{\nu}$ is $\frac{a}{2}$-regular by \eqref{eq:regularity for rho tilde n=2} and is a Gibbs measure by the domain Markov property; hence $\tilde{\nu} \preceq \nu^{+}_{\beta, \rho}$ by Proposition~\ref{prop: maximal b.c}. It remains to prove that $\nu^{+}_{\beta, \rho} \preceq \tilde{\nu}$, which implies that $\tilde{\nu}=\nu^{+}_{\beta, \rho}$.

    Let $\Lambda_i = B_i(o)$.
    Using Theorem \ref{thm:product domination} we have for any $i \geq 0$, 
    \begin{equation}
        \label{eq: zeta domination 1}
        \nu_{(\Lambda_{3i}| \partial \Lambda_i), \beta, \rho}^{\xi^{+}} \preceq 
        \kappa(\Lambda_{3i}, \xi^{+})|_{\partial \Lambda_i} + \mu_{*}^{\otimes \partial \Lambda_i}.
    \end{equation}
    We claim that for all $i$ large enough
    \begin{equation}
        \label{eq: zeta domination 2}
        \forall x \in \partial \Lambda_i \quad \mu_* + \kappa(\Lambda_{3i}, \xi^+)_x \preceq \zeta.
    \end{equation}
    Indeed, for $x \in \partial \Lambda_i$ we have
    \[
    \kappa(\Lambda_{3i}, \xi^{+})_x=\sum_{y\in \partial^{\mathrm{ext}} \Lambda_{3i}} \frac{|\xi^{+}_y|}{D^{2 d_G(x,y)}} \leq |\partial^{\mathrm{ext}} \Lambda_{3i}| \frac{\sqrt{\log(|\Lambda_{3i+1}|)}}{D^{4i}} \leq \frac{\sqrt{(3i+1) \log(D)}}{D^i}.
    \]
    The right hand side above tends to $0$ as $i \to \infty$. Choosing $i$ large enough so that $\kappa(\Lambda_{3i}, \xi^{+})_x \leq B$ and $(t-\kappa(\Lambda_{3i}, \xi^{+})_x)^2 \geq t^2/8$ for all $t \geq B$, it follows from the definitions of $\mu_*$ and $\zeta$ that 
    $\mu_*([t- \kappa(\Lambda_{3i}, \xi^+)_x, \infty)) \leq \zeta([t, \infty))$,
    which proves \eqref{eq: zeta domination 2}.
    
    Combining \eqref{eq: zeta domination 2} with \eqref{eq: zeta domination 1} and applying Lemma \ref{Lemma: beta monotonicity}, we obtain for all $i$ large enough
    \begin{equation*}
        \nu_{(\Lambda_{3i}| \partial \Lambda_i), \beta, \rho}^{\xi^{+}} \preceq \nu_{\partial \Lambda_i, 0, \zeta}^{0} \preceq \nu_{(\Lambda_i|\partial \Lambda_i), \beta, \tilde{\rho}}^{0}.
    \end{equation*}
    Now let $\Lambda' \Subset V$ and consider an increasing event $H$ that depends only on spins inside $\Lambda'$. The domain Markov property and Proposition \ref{b.c monotonicity} together with the above stochastic domination imply that for all $i$ large enough,
    \begin{align*}
        \nu_{\Lambda_{3i}, \beta, \rho}^{\xi^{+}} [H] &= \int_{\mathbb{R}^{\partial \Lambda_i}} \nu_{\Lambda_i \setminus \partial \Lambda_i, \beta, \rho}^{\eta}[H] \mathrm{d} \nu_{(\Lambda_{3i}| \partial \Lambda_i), \beta, \rho}^{\xi^{+}}[\eta]\\
        &\leq \int_{\mathbb{R}^{\partial \Lambda_i}} \nu_{\Lambda_i \setminus \partial \Lambda_i, \beta, \rho}^{\eta}[H] \mathrm{d} \nu_{(\Lambda_{i}|\partial \Lambda_i), \beta, \tilde{\rho}}^{0}[\eta] = \nu_{\Lambda_i, \beta, \tilde{\rho}}^{0}[H].
    \end{align*}
    Taking $i \rightarrow \infty$, we have $\nu^{+}_{\beta, \rho}[H] \leq \tilde{\nu}[H]$.
\end{proof}

\begin{proof}[Proof of Proposition~\ref{prop:random boundary conditions}]
First observe that the monotonicity in volume of Proposition~\ref{prop: Lambda monotonicity} implies convergence of the measures $\tilde{\nu}^0_{\Lambda,\beta,\rho}$ to an infinite-volume limit $\tilde{\nu}$ as $\Lambda \nearrow V$. By Corollary \ref{cor: alternative measures}, we have that $\tilde{\nu}^0_{\Lambda,\beta,\rho} \preceq \nu^0_{\Lambda,\beta,\tilde{\rho}}$ for any $\Lambda \Subset V$, so $\tilde{\nu} \preceq \nu^{+}_{\beta, \rho}$ because $\nu^0_{\Lambda,\beta,\tilde{\rho}} \rightarrow \nu^{+}_{\beta, \rho}$ by Proposition \ref{prop: alternative plus measure}. 
The proof that $\nu^{+}_{\beta, \rho} \preceq \tilde{\nu}$ is similar to that of Proposition~\ref{prop: alternative plus measure}, and in fact, even simpler, as one does not need to use Lemma~\ref{Lemma: beta monotonicity}.
\end{proof}

\bibliographystyle{plain}
\bibliography{references}

\end{document}